\newtheorem{theorem}{Theorem}[section]
\newtheorem{lemma}[theorem]{Lemma}
\newtheorem{corollary}[theorem]{Corollary}
\newtheorem{proposition}[theorem]{Proposition}
\newtheorem{definition}[theorem]{Definition}
\newtheorem{remark}[theorem]{Remark}
\newcommand{\bX}{\mathbb{X}}
\newcommand{\bx}{{\bf x}}
\newcommand{\by}{{\bf y}}
\newcommand{\bV}{\mathbb{V}}
\newcommand{\bY}{\mathbb{Y}}
\newcommand{\bP}{\mathbb{P}}
\newcommand{\bL}{\mathbb{L}}
\newcommand{\bB}{\mathbb B}
\newcommand{\Q}{\mathbb{Q}}
\newcommand{\Z}{\mathbb{Z}}
\newcommand{\h}{\mathbb{H}}
\newcommand{\Oo}{\mathcal{O}}
\newcommand{\cZ}{\mathcal{Z}}
\newcommand{\cC}{\mathcal{C}}
\newcommand{\cY}{\mathcal{Y}}
\newcommand{\cN}{\mathcal{N}}
\newcommand{\cL}{\mathcal{L}}
\newcommand{\cM}{\mathcal{M}}
\newcommand{\cF}{\mathcal{F}}
\newcommand{\cT}{\mathcal{T}}
\newcommand{\Exc}{\mathrm{Exc}}
\newcommand{\id}{\mathrm{id}}
\newcommand{\Kra}{\mathrm{Kra}}
\newcommand{\Nilp}{\mathrm{Nilp}}
\newcommand{\Int}{\mathrm{Int}}
\newcommand{\Spec}{\mathrm{Spec}\, }
\newcommand{\Spf}{\mathrm{Spf}\, }
\newcommand{\SpfOF}{{\mathrm{Spf}\,\mathcal{O}_{\breve{F}} }}
\newcommand{\Herm}{\mathrm{Herm}}
\newcommand{\Hom}{\mathrm{Hom}}
\newcommand{\End}{\mathrm{End}}
\newcommand{\q}{q}
\font\cute=cmitt10 at 12pt 
\newcommand{\kay}{{\text{\cute k}}}
\newcommand{\tr}{\operatorname{tr}}
\newcommand{\norm}{\operatorname{N}}
\newcommand{\ff}{\operatorname{ if }}
\newcommand{\kzxz}[4]{\left(\begin{smallmatrix} #1 & #2 \\ #3 & #4\end{smallmatrix}\right) }
\newcommand{\SL}{\operatorname{SL}}
\newcommand{\ord}{\operatorname{ord}}
\numberwithin{equation}{section}
\begin{document}

\title[The Kudla-Rapoport conjecture at a ramified prime for $U(1, 1)$]{The Kudla-Rapoport conjecture at a ramified prime for $U(1, 1)$}
\author{Qiao He, Yousheng Shi, and  Tonghai Yang}
\address{Department of Mathematics, University of Wisconsin Madison, Van Vleck Hall, Madison, WI 53706, USA}
\email{qhe36@wisc.edu}
\email{shi58@wisc.edu}
\email{thyang@math.wisc.edu}

	\subjclass[2000]{11G18, 14G35, 14G40 }

\thanks{The first and the third authors are partially supported by a NSF grant DMS-1762289.}

\begin{abstract} In this paper, we proved a local arithmetic Siegel-Weil formula for a $U(1, 1)$-Shimura variety at a ramified prime, a.k.a. a Kudla-Rapoport conjecture at a ramified case. The formula needs to be modified from the original Kudla-Rapoport conjecture. In the process, we also gives an explicit decomposition of the special divisors of the Rapoport-Zink space of unitary type $(1, 1)$ (Kr\"amer model). A key ingredient is to relate the Rapoport-Zink space to the Drinfeld upper plane.
		
	\end{abstract}

\maketitle

    \setcounter{tocdepth}{1}
	\tableofcontents
	\section{Introduction}
	The  Arithmetic Siegel-Weil formula is part of the Kudla program, revealing a deep relation between the derivative of some Eisenstein series and the arithmetic degree or intersection of arithmetic special cycles on certain Shimura varieties of unitary or orthogonal type. It was started with Kudla's seminal paper in 1997 (\cite{Kudla97}), and the first two complete examples were worked out by Kudla, Rapoport, and one of the authors (T.Y.) (\cite{KRYtiny}, \cite{KRYcomp}, and \cite{KRYbook}). Afterwards, more progress have been made in other cases, e.g.,  \cite{HYBook},  \cite{Liu}, \cite{DY1}, \cite{LZ}, \cite{GS}, and \cite{BY20}.
	In almost all cases, the proof  is via  term by term comparison  on $T$-th Fourier coefficients. When  $T$ is non-singular, the $T$-th coefficient of the Eisenstein series factors through product of local Wittaker functions while the arithmetic intersection (degree) is often supported on a single prime. The key step is a local identity---local arithmetic Siegel-Weil formula: a deep relation between the derivative of the local Whittaker function and the `local intersection number' at a special point. In particular, the `local intersection index' is independent of the choice of `special points' (depending only on $T$)---an amazing phenomenon itself. At the infinite prime, it was proved by Yifeng Liu in the unitary case (\cite{Liu}), inspired by Kudla's original proof for Shimura curves(\cite{Kudla97}). The idea does not seem to extend to orthogonal cases.  Two different proofs in the general orthogonal case were given independently by Garcia and Sankaran \cite{GS} and Bruinier and Yang (\cite{BY20}). At a `good' finite  prime (everything is `unramified'),  Kudla and Rapoport formulated a precise local arithmetic Siegel-Weil formula, the so-called Kudla-Rapoport conjecture. This conjecture was recently proved by Li and Zhang (\cite{LZ}, see also \cite{BY20} for a special case) in the unitary case.  More recently, Cho (\cite{Cho}) considered the unitary $U(n, 1)$ case of miniscule parahoric level over an  unramified quadratic extension $E/F_0$ (of local $p$-adic fields),  and made some precise conjectural local arithmetic Siegel-Weil formula. The case $n=1$ was essentially  Sankaran's early result (\cite{San3}). In Shi's work (\cite{Shi2}) and  this paper, we consider the simplest case $U(1, 1)$ over a ramified quadratic local field extensions $F/F_0$. Ramification causes a lot of technical difficulty,  including singularity and so on. The resulting formula also needs modification  in general.  We hope to extend the result to general $U(n, 1)$ in a sequel. We now set up some notations and describe the main results in some detail.

Let $F=F_0(\pi)$ be  a ramified quadratic  field extension of a $p$-adic field $F_0$ with $ p>2$ with $\pi_0=\pi^2$ being a uniformizer of $F_0$. Let $\kay=\mathbb F_q$ be the residue field of $F_0$ (also $F$) with the algebraic closure $\bar\kay$. We denote the Galois conjugate of $a\in F$ by $\bar{a}$. Let $(V, (\, , \, ))$ be a Hermitian space over $F$ of dimension $2$  together with an $\Oo_F$-unimodular lattice $L$. Up to equivalence, we can describe $L$ explicitly as follows. Let $B$ be a quaternion algebra over $F_0$ together with an embedding $\Oo_F \subset \Oo_B$ (when $B=M_2(F_0)$, we take $\Oo_B=M_2(\Oo_{F_0})$). Choose $\delta \in  \Oo_B$ with $\delta x = \bar x  \delta$ for all $x \in \Oo_F$  and $\Delta =\delta^2 \in \Oo_{F_0}^\times$. Then we can take $L=\Oo_F + \Oo_F \delta$ with $z_i =x_i + y_i \delta$
$$
(z_1, z_2)= \tr_F(z_1 z_2^\iota) = x_1 \bar{x}_2 - y_1 \bar y_2 \Delta.
$$
Here $\tr_F(x+ y \delta) =x \in F$. Notice that $V=B$ in this case.

	Recall that a strict formal $\Oo_{F_0}$-module over an $\Oo_{F_0}$-scheme $S$ is a formal $p$-divisible group $X$ over $S$ with an action $\Oo_{F_0}\rightarrow \End(X)$ such that the induced $\Oo_{F_0}$-action on the Lie algebra $\mathrm{Lie}\, X$ is via the structural morphism $\Oo_{F_0}\rightarrow \Oo_S$.
	Let $\bY =(\bY, \iota_\bY, \lambda_\bY)$ be the unique supersingular strict formal $\Oo_{F_0}$-module of dimension 1 and $F_0$ height $2$ over $\bar\kay$ with an $\Oo_F$-action $\iota_\bY$ and a principal polarization $\lambda_\bY: \bY \rightarrow \bY^\vee=\bY$. Let $(\bX, \iota_\bX, \lambda_{\bX})$ be a  strict formal $\Oo_{F_0}$-module of dimension $2$ and $F_0$ height $4$ over $\bar\kay$, together with the $\Oo_F$-action $\iota_\bX$ and principal polarization $\lambda_\bX: \bX \mapsto \bX^\vee$. Let $\bL= \Hom_{\Oo_F}(\bY, \bX)$ be endowed with the Hermitian form
\begin{equation} \label{eq:V-Hermitian}
h(\bx,\by)=\lambda_{\bY}^{-1}\circ {\by}^\vee \circ   \lambda_{\bX}\circ \bx \in \End_{\Oo_F}(\bY)\xrightarrow[\sim]{\iota_{\bY}^{-1}} \Oo_F.
\end{equation}
The Hermitian lattice $(\bL, h)$ is $\Oo_F$-unimodular as the polarizations are principal. Let $\bV= \bL \otimes_{\Z_p} \Q_p$ be the associated Hermitian space.  To describe the Rapoport-Zink (RZ) space for the Shimura curve related to $L$,  we require
\begin{equation}
\det L /\det \bL \notin  \norm_{F/F_0} F^\times,
\end{equation}
i.e., $V$ and $\bV$ represent two different non-degenrate Hermitian spaces over $F$ of dimension $2$. By \cite[Remark 4.2]{RTW} (also see \cite[Proposition II.5.2]{BC} and the discussion below on the isomorphism between $\cN$ and $\breve{\Omega}$), the determinant condition determines $\bX$ uniquely up to $\Oo_{F}$-linear quasi-isogenies that preserve the polarization, hence determines the corresponding RZ space.

	Let $\breve{F}$ be the completion of the maximal unramified extension of $F$ with ring of integers $\Oo_{\breve{F}}$, and similarly for $\breve{F}_0$. We also denote $W=\Oo_{\breve{F}_0}$.
Let  $\mathrm{Nilp}_{\Oo_{\breve{F}}}$ be the category of $\Oo_{\breve{F}}$-schemes $S$ such that $\pi \cdot \Oo_{S}$ is a locally nilpotent ideal sheaf. Recall the RZ-space $\cN_{(1,1)}$ over $\Oo_{\breve{F}}$ which represents the following moduli problem ( \cite{RTW}): for $S\in \mathrm{Nilp}_{\Oo_{\breve{F}}}$, $\cN_{(1,1)}(S)$ is the groupoid of isomorphism classes of quadruples $(X,\iota,\lambda,\rho)$ given as follows:
	\begin{enumerate}
		\item
		$X$ is a strict formal $\Oo_{F_0}$-module over $S$ of dimension $2$ and $F_0$ height $4$;
		
		\item $\iota:\Oo_{F}\rightarrow \End (X)$ is an $\Oo_F$-action on $X$   satisfying Kottwitz condition:
		\[\mathrm{char}(\iota(\pi)|\mathrm{Lie}X)=(T-\pi)(T+\pi)=T^2 -\pi_0;\]
		
		\item
		$\lambda:X\rightarrow X^{\vee}$ is a principal quasi-polarization whose associated Rosati involution induces on $\Oo_F$ the nontrivial automorphism over $F_0$;
		
		\item Finally, $\rho:X\times_{S}\bar{S}\rightarrow \bX\times_{\Spec k}\bar{S}$ is a $\Oo_F$-linear quasi-isogeny of height 0 such that $\lambda$ and $\rho^{*}(\lambda_{\bX})$ differ locally on $\bar{S}$ by a factor in $\Oo_{F_0}^{\times}$.
	\end{enumerate}
	

  Although $\cN_{(1,1)}$ is normal and Cohen-Macaulay (see \cite{P}), it is not regular. There is a desingularization, called   the Kr\"amer model $\cN_{(1, 1)}^{\mathrm{Kra}}$(\cite{Kr}), which represents the isomorphism classes of quintuples $(X,\iota,\lambda,\rho,\mathcal{F}_X)$, where $(X,\iota,\lambda,\rho) \in \cN_{(1, 1)}(S)$,  and
\begin{enumerate}
\item
$\mathcal{F}_X$ is an $\Oo_F\otimes\Oo_S$-submodule of $\mathrm{Lie}(X)$ with $\Oo_S$-rank 1 and is a direct $\Oo_S$-summand of $\mathrm{Lie}(X)$;
\item  $\Oo_F$ acts on $\mathcal{F}_X$ via the structure morphism $\Oo_F\rightarrow \Oo_S$ and acts on $\mathrm{Lie}(X)/\mathcal{F}_X$ via the Galois conjugate of the structure morphism.
\end{enumerate}
By \cite[Proposition 2.7]{Shi2}, $\cN^\Kra$ is the blow up of $\cN_{(1,1)}$ along its singular locus which consists of ``superspecial" points.

Let $\cN_{(1, 0)}$ be the similar Rapoport-Zink space  with the framing object $\bX$ replaced by $\bY$. Then $\cN_{(1, 0)}$ is smooth and has a universal strict formal $\Oo_{F_0}$-module $\cY$ over $\Oo_{\breve{F}}$. For  every $S\in \mathrm{Nilp}_{\Oo_{\breve{F}}}$, we have $\cN_{(1, 0)}(S)=\{ Y=\cY_S\}$.  The RZ-space we considered is $\mathcal N^{\Kra} = \cN_{(1, 0)} \times \cN_{(1, 1)}^{\Kra}$.   Adding $\cN_{(1, 0)}$ enables us to define cycles naturally. For simplicity, we often identify $\mathcal N^{\Kra}$ with $\cN_{(1, 1)}^{\Kra}$ in this paper (ignore the one element formal scheme $\cN_{(1, 0)})$.

For a $\bx \in \bV\setminus\{0\}$, let $\cZ^{\Kra}(\bx)$  be
 the closed (formal) subscheme of $\cN^{\mathrm{Kra}}$ consisting of $(Y, X) =(Y, \iota_Y, \lambda_Y, \rho_Y,  X, \iota_X, \lambda_X, \rho_X,  \cF_X) \in \cN^{\Kra}$ such that $\rho_X^{-1}\circ \bx \circ \rho_Y$ lifts to an $\Oo_F$ homomorphism $x:Y\rightarrow X$.  According to \cite[Proposition 4.3]{Ho2}\footnote{\cite{Ho2} deals with the case when $F_0=\Q_p$. Using the relative display theory and its Grothendieck-Messing theory developed in \cite{ACZ}, the results and proofs there extend word by word to general $F_0$.}, $\cZ^\Kra(\mathbf{x})$ is a Cartier divisor.  The closed formal subscheme $\cZ(\bx) \subset \mathcal N_{(1, 0)} \times \cN_{(1, 1)} $ can be defined the same way. Notice, however, that $\cZ(\bx)$ is not a Cartier divisor.
	For a pair of independent vectors $\bx_1, \bx_2\in \bL$, define the intersection number
\begin{equation}
\hbox{Int}(L_{\bx_1, \bx_2})=\chi(\cN^{\mathrm{Kra}},\Oo_{\cZ^{\mathrm{Kra}}(\bx_1)}\otimes^{\mathbb{L}}\Oo_{\cZ^{\mathrm{Kra}}(\bx_2)}),
\end{equation}
which depends only on the lattice $L_{\bx_1, \bx_2} \subset \bV$ generated by $\bx_1$ and $\bx_2$.
Two Hermitian matrices $T_1, T_2 \in \hbox{Herm}_2(F)$ are said to be equivalent, denoted by $T_1\approx T_2$, if there is a non-singular matrix $g \in \hbox{GL}_2(\Oo_F)$ such that $g^{t} T_1 \bar{g} =T_2$. It is known (see \cite[Proposition 4.3]{J}) that every  Hermitian matrix of order $2$  is  equivalent to either a diagonal matrix or an anti-diagonal matrix of the form $\kzxz {0} {\pi^n} {(-\pi)^n} {0}$  with $n$ odd.
The first  main result of this paper,  together with  that in \cite{Shi2},  is the following theorem.

\begin{theorem}\label{maintheo1} For a pair of independent vectors $\bx_1, \bx_2 \in \bV$, let $T(\bx_1, \bx_2) = ( h(\bx_i, \bx_j))$ be the associated Gram matrix.   Then $\hbox{Int} (L_{\bx_1, \bx_2})$ depends only on the equivalence class of $T=T(\bx_1, \bx_2)$.  More precisely, $\hbox{Int} (L_{\bx_1, \bx_2})=0$ unless $T(\bx_1, \bx_2)$ is integral. Assume that $T(\bx_1, \bx_2)$ is integral.
\begin{enumerate}
\item  When  $T \approx  \hbox{diag}(u_1 (-\pi_0)^\alpha, u_2 (-\pi_0)^\beta) $ with $u_i \in \Oo_{F_0}^\times$ and $0\le \alpha \le \beta$, we have
$$
\mathrm{Int}(L_{\bx_1, \bx_2})=\begin{cases}
    \alpha+\beta-\frac{2\q(\q^\alpha-1)}{\q-1}     &\ff \bV \hbox{ is isotropic},
    \\
     2 \sum_{s=0}^\alpha \q^s(\alpha +\beta +1-2s) -\alpha -\beta -2 &\ff \bV \hbox{ is anisotropic}.
     \end{cases}
$$

\item  When  $ T \approx \kzxz {0} {\pi^n} {(-\pi)^n} {0}$ with $n$ odd (which occurs only when  $\bV$ is isotropic),   we have
$$
\Int(L_{\bx_1, \bx_2}) =
- \frac{(\q+1)(\q^{\frac{n+1}2}-1)}{\q-1} + n +1.
$$
\end{enumerate}
\end{theorem}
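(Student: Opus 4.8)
The plan is to compute the derived intersection geometrically on the (regular) Kr\"amer model by decomposing each special divisor into a flat part and a part supported on the special fiber, and then to use the identification of $\cN^{\Kra}$ with the Drinfeld formal upper half plane to reduce all vertical bookkeeping to combinatorics on the Bruhat–Tits tree of $\mathrm{PGL}_2(F_0)$. Before the main computation, the two structural claims are quick. That $\mathrm{Int}(L_{\bx_1,\bx_2})$ depends only on the equivalence class of $T$ is immediate: for $g\in\mathrm{GL}_2(\Oo_F)$ the pair $(\bx_1,\bx_2)g$ generates the same lattice $L_{\bx_1,\bx_2}\subset\bV$, and $\mathrm{Int}$ was already defined to depend only on that lattice. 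For the vanishing when $T$ is non-integral I would argue pointwise: at any point of $\cZ^{\Kra}(\bx_1)\cap\cZ^{\Kra}(\bx_2)$ both $\bx_i$ lift to honest $\Oo_F$-homomorphisms $x_i\colon Y\to X$, so each $h(\bx_i,\bx_j)=\lambda_Y^{-1}\circ x_j^\vee\circ\lambda_X\circ x_i$ lies in $\End_{\Oo_F}(Y)=\Oo_F$ (up to the harmless $\Oo_{F_0}^\times$-ambiguity in the polarization), forcing $T$ to be integral. Contrapositively, a non-integral $T$ makes the intersection empty, whence $\mathrm{Int}=0$. This reduces everything to the diagonal and anti-diagonal integral representatives in the statement.

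Next I would invoke the explicit decomposition $\cZ^{\Kra}(\bx)=\mathcal{H}(\bx)+\mathcal{V}(\bx)$ developed earlier, where $\mathcal{H}(\bx)$ is flat over $\Oo_{\breve F}$ (the strict transform of the generic special cycle) and $\mathcal{V}(\bx)$ is a sum of special-fiber components with explicit multiplicities determined by $h(\bx,\bx)=u(-\pi_0)^k$. Under the Drinfeld identification these components are the projective lines indexed by the vertices of the tree, the multiplicity of each being governed by its combinatorial distance to a base vertex fixed by $\bx$. The split/non-split dichotomy of $B$ dictates whether the relevant part of the tree is the full infinite tree (isotropic $\bV$) or a finite sub-configuration (anisotropic $\bV$); this is also the structural reason the anti-diagonal form can arise only in the isotropic case.

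With both divisors decomposed, I would expand
$\mathrm{Int}=\mathcal{H}_1\!\cdot\!\mathcal{H}_2+\mathcal{H}_1\!\cdot\!\mathcal{V}_2+\mathcal{V}_1\!\cdot\!\mathcal{H}_2+\mathcal{V}_1\!\cdot\!\mathcal{V}_2$
and evaluate each term. The horizontal pairings contribute the polynomial-in-$(\alpha,\beta)$ pieces (and the $n+1$ in the anti-diagonal case), computed from the local structure of the flat cycles as in the theory of quasi-canonical divisors. The mixed and vertical pairings are governed by the incidence form of the tree, whose off-diagonal entries record adjacency and whose self-intersections equal $-(\q+1)$; summing the multiplicities of $\mathcal{V}(\bx)$ against this form over vertices at controlled distances produces exactly the geometric sums $\sum_s \q^s(\dots)$ together with the negative corrections carrying the factor $-(\q+1)$, e.g. $-\tfrac{2\q(\q^\alpha-1)}{\q-1}$ and $-\tfrac{(\q+1)(\q^{(n+1)/2}-1)}{\q-1}$. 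Assembling these and simplifying the (locally finite) series yields the closed formulas, the overall sign being positive in the anisotropic case and possibly negative in the isotropic case precisely because the infinite vertical self-intersections can dominate.

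The main obstacle, I expect, is the derived part of the vertical–vertical intersection. Since $\cZ^{\Kra}(\bx_1)$ and $\cZ^{\Kra}(\bx_2)$ share special-fiber components, $\Oo_{\cZ^{\Kra}(\bx_1)}\otimes^{\mathbb L}\Oo_{\cZ^{\Kra}(\bx_2)}$ carries genuine higher $\mathrm{Tor}$ sheaves, so $\mathcal{V}_1\!\cdot\!\mathcal{V}_2$ is not a naive point count but an alternating sum $\sum_i(-1)^i\,\mathrm{length}\,\mathrm{Tor}_i$ that must be read off from the precise local rings of $\cN^{\Kra}$ along the exceptional locus of the Kr\"amer desingularization. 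Pinning down these local rings—distinguishing the smooth points of the special fiber from the points that the Kr\"amer model resolves—and checking that the multiplicities and the incidence form assemble into the stated expressions is where the real work lies; once that is in hand, the summation over the tree and the resulting geometric-series manipulations are routine.
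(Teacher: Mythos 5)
Your overall strategy coincides with the paper's: decompose each $\cZ^{\Kra}(\bx_i)$ into a horizontal part and a vertical part read off from the Bruhat--Tits tree via the Drinfeld identification, then expand the intersection number bilinearly and evaluate using self-intersection numbers and adjacency. The reduction to integral $T$ and the invariance under $\mathrm{GL}_2(\Oo_F)$ are also handled the same way. However, two of your concrete claims would derail the computation.

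First, your attribution of terms to the four pairings is wrong in a way that matters. In the anti-diagonal case $h(\bx_i,\bx_i)=0$ for both $i$, so by the decomposition theorem the horizontal parts $\mathcal H_i$ are \emph{empty}; the term $n+1=2r$ in the final formula is purely vertical (it is the multiplicity $n(b_1,\Lambda)=2r$ of the component at the far end of the geodesic segment joining the two ``axes''), not a horizontal contribution. In the diagonal case one has $\mathcal H_1\cdot\mathcal H_2=0$ as well: the two horizontal components both meet the special fiber inside the same exceptional divisor $\Exc_{\Lambda_{b_1}}$ but at distinct points (this is exactly the determinant computation in the paper's Lemma on horizontal cycles), and the $\beta+1$ arises from $\mathcal H_1\cdot\mathcal V_2=\cZ^h(\bx_1)\cdot(n(b_2,\Lambda_{b_1})+1)\Exc_{\Lambda_{b_1}}$. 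A proof that books the polynomial pieces under $\mathcal H_1\cdot\mathcal H_2$ will not close.

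Second, your vertical part is incomplete: in the Kr\"amer model the special fiber has components of two kinds, the lines $\bP_{\bar\Lambda_2}$ indexed by vertices (type-$2$ lattices, multiplicity $n(b,\Lambda_2)$, self-intersection $-(\q+1)$) \emph{and} the exceptional divisors $\Exc_{\Lambda_0}$ indexed by edges (type-$0$ lattices, multiplicity $n(b,\Lambda_0)+1$, self-intersection $-2$), and in the resolved model two lines $\bP_{\bar\Lambda_2}$, $\bP_{\bar\Lambda_2'}$ never meet directly but only through the $\Exc_{\Lambda_0}$ between them. The closed formulas are precisely the alternating sums $\sum_{\Lambda_2}n(b_1,\Lambda_2)-\sum_{\Lambda_0}(n(b_1,\Lambda_0)+1)$ over $\cT(\bx_1)\cap\cT(\bx_2)$; omitting the edge-indexed components, or treating the incidence form as that of the unresolved configuration, gives the wrong answer. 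Finally, the ``main obstacle'' you flag --- higher $\mathrm{Tor}$'s for divisors sharing components --- is dispatched far more cheaply than by computing local $\mathrm{Tor}$ modules: since both cycles are Cartier divisors on the regular two-dimensional formal scheme $\cN^{\Kra}$, the Euler characteristic $\chi(\Oo_{\cZ_1}\otimes^{\LL}\Oo_{\cZ_2})$ is bilinear, and the pairing of any vertical component against a Cartier divisor is the degree of the restricted line bundle; the identities $E\cdot X_s=0$ and $\Gamma_i^2=-\tfrac{1}{d_i}\sum_{j\neq i}d_j\Gamma_j\cdot\Gamma_i$ (Liu, Prop.\ 9.1.21) then yield $\bP_{\bar\Lambda_2}\cdot\cZ^{\Kra}(\bx_2)=1$ and $\Exc_{\Lambda_0}\cdot\cZ^{\Kra}(\bx_2)=-1$ on the support, after which only the tree combinatorics remains.
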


The case that $\bV$ is anisotropic was proved by  one of the authors (Y.S.) in \cite{Shi2}. This paper was inspired by his work. A key ingredient is to understand the arithmetic divisor $\cZ^{\Kra}(\bx)$ and in particular the contribution of the exceptional divisors, which is new in the ramified case. In our special  $U(1, 1)$-case, we use critically the special facts:
\begin{enumerate}
\item  $\mathcal N_{(1, 1)}$ descends to a {\bf regular} formal scheme $\cN$ over $\Oo_{\breve{F}_0}$;

\item  The formal scheme $\cN$ is isomorphic to the RZ-space $\mathcal M_{\Gamma_0(\pi_0)}$ (see \cite[Definition 8.1]{RSZ}) in the case $\bV$ is anisotropic (i.e., the quaternion algebra $B$ at the beginning of the introduction is a matrix algebra),  and is isomorphic to the  formal scheme $\mathcal M$, which is represented by the formal completion $\breve{\Omega}$ of  the Drinfeld half plane over $\Oo_{\breve F_0}$ in the case $\bV$ is isotropic (i.e. $B$ is a division algebra)(\cite{D}), see Section \ref{sect:moduli}.
\end{enumerate}
In this paper, we consider the case that $\bV$ is isotropic. Using properties of $\breve\Omega$, we can understand the special fibers of  the divisors $\cZ^{\Kra}(\bx)$. Using explicit local equations at `superspecial points' of $\breve\Omega$, we obtain the following decomposition result of independent interest.
An $\Oo_F$-lattice $\Lambda \subset \bV$  is called a vertex lattice of type $t=0$ or $2$ if $\pi \Lambda \subset  \Lambda^\sharp \subset \Lambda$ with $[\Lambda:\Lambda^\sharp]=t$. Here $\Lambda^\sharp$ is the $\Oo_F$-dual of $\Lambda$ with respect to the Hermitian form on $\bV$. Associated to each vertex lattice $\Lambda_0$ of type $0$  is an exceptional divisor $\Exc_{\Lambda_0} \subset \cN^{\Kra}/\bar\kay$, and associated to each vertex lattice $\Lambda_2$  of type $2$ is a vertical projective line  $\mathbb P_{\bar{\Lambda}_2} /\bar\kay \subset \cN^{\Kra}/\bar\kay$. Here and throughout this paper, we write $\bar{\Lambda} = \Lambda \otimes_{\Oo_F} \bar\kay$ for an $\Oo_F$-lattice $\Lambda$.

\begin{theorem} \label{maintheo2} Let $\bx \in \bV\setminus\{0\}$ with $h(\bx, \bx) \in \Oo_{F_0}$, and let $\cT(\bx)$ be the set of vertex lattices $\Lambda$   such that $\cZ(\bx)^{\Kra}(\bar\kay)$ intersects with $ \mathbb P_{\bar{\Lambda}}$ or $\hbox{Exc}_{\Lambda}$, depending on whether $\Lambda$ is of type $2$ or $0$. Then
$$
\cZ^{\Kra}(\bx) = \sum_{ \Lambda_{2} \in \cT(\bx)} n(\bx, \Lambda_2) \mathbb P_{\bar{\Lambda}_2}
   + \sum_{ \Lambda_{0} \in \cT(\bx)} (n(\bx, \Lambda_0) +1) \Exc_{\Lambda_0} + \cZ^h(\bx).
$$
Here $\cZ^h(\bx)$ is the horizontal component of $\cZ^{\Kra}(\bx)$, which is empty or  non-empty irreducible depending on $h(\bx, \bx) =0$ or not.  $n(\bx, \Lambda)$ is the largest integer $n$ such that $\pi^{-n} \bx \in \Lambda$. When  $h(\bx, \bx) \notin \Oo_{F_0}$, $\mathcal Z^{Kra}(\bx) =0$.
\end{theorem}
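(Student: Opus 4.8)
The plan is to move the whole computation onto the Drinfeld upper half plane, where everything is explicit. Since we are in the isotropic case, the regular descent $\cN$ of $\cN_{(1,1)}$ over $\Oo_{\breve{F}_0}$ is isomorphic to the formal completion $\breve\Omega$; its special fiber is a union of projective lines meeting transversally at the superspecial points, and under the dictionary of the previous sections these lines are exactly the $\mathbb{P}_{\bar\Lambda_2}$ (indexed by type-$2$ vertex lattices, i.e.\ the vertices of the Bruhat--Tits tree of $\mathrm{PGL}_2(F_0)$) while the superspecial points correspond to the type-$0$ vertex lattices $\Lambda_0$. The Kr\"amer model $\cN^{\Kra}$ is obtained from $\cN\times_{\Oo_{\breve{F}_0}}\Oo_{\breve{F}}$ by blowing up the superspecial points, which is how the exceptional divisors $\Exc_{\Lambda_0}$ arise. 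Because $\cN^{\Kra}$ is regular of dimension $2$, $\cZ^{\Kra}(\bx)$ is an effective Cartier divisor and decomposes uniquely as an integral combination of its irreducible components; the theorem is the explicit determination of these components and their coefficients.

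First I would settle the integrality statements. If $h(\bx,\bx)\notin\Oo_{F_0}$ then no point of $\cN^{\Kra}$ can support a lift of $\bx$: a genuine $\Oo_F$-homomorphism $x$ would give $h(\bx,\bx)=h(x,x)\in\Oo_F$, and since this value is fixed by conjugation it lies in $\Oo_F\cap F_0=\Oo_{F_0}$; hence $\cZ^{\Kra}(\bx)=0$. Granting $h(\bx,\bx)\in\Oo_{F_0}$, separate the divisor by flatness: the horizontal part $\cZ^h(\bx)$ is the flat closure over $\Oo_{\breve{F}}$ of the special cycle on the generic fiber, governed by the theory of quasi-canonical liftings; it is empty exactly when $\bx$ is isotropic (so that $h(\bx,\bx)=0$ and no CM point exists) and is otherwise a single irreducible horizontal divisor. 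The remaining summand $\cZ^{\Kra}(\bx)-\cZ^h(\bx)$ is supported on the special fiber, so it is a nonnegative combination of the vertical components $\mathbb{P}_{\bar\Lambda_2}$ and $\Exc_{\Lambda_0}$, and by the definition of $\cT(\bx)$ the components that actually occur are precisely those indexed by $\cT(\bx)$.

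The heart of the argument is the computation of the vertical multiplicities in explicit local coordinates at the superspecial points. In a Kr\"amer chart around a type-$0$ lattice $\Lambda_0$ one can write down simultaneously the local equations of the two adjacent lines $\mathbb{P}_{\bar\Lambda_2}$, of the exceptional divisor $\Exc_{\Lambda_0}$, and of the local equation cutting out $\cZ(\bx)$ pulled back from $\cN$. The order of vanishing of the latter along a line $\mathbb{P}_{\bar\Lambda_2}$ equals the largest power of $\pi$ dividing $\bx$ inside $\Lambda_2$, namely $n(\bx,\Lambda_2)$, which gives the first sum. For the exceptional divisor the same local computation yields the order $n(\bx,\Lambda_0)$ coming from $\cZ(\bx)$ together with one additional unit contributed by the blow-up itself, so that $\Exc_{\Lambda_0}$ enters with coefficient $n(\bx,\Lambda_0)+1$; this is the source of the uniform ``$+1$''. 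Assembling the local orders over all charts, and checking compatibility on the overlaps, gives the stated global identity.

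The main obstacle is precisely this local analysis at the superspecial points together with the bookkeeping across the two integral models. One must produce honest local equations for $\cZ(\bx)$ in the Drinfeld/Kr\"amer coordinates, verify that the strict transform meets each $\Exc_{\Lambda_0}$ with exactly the predicted multiplicity, and confirm that the ``$+1$'' is genuinely uniform over all type-$0$ lattices in $\cT(\bx)$ and does not secretly absorb part of $\cZ^h(\bx)$. Reconciling the $\Oo_{\breve{F}_0}$-model $\cN\cong\breve\Omega$, on which $\cZ(\bx)$ is naturally described, with the $\Oo_{\breve{F}}$-model $\cN^{\Kra}$, on which the Cartier divisor $\cZ^{\Kra}(\bx)$ is defined, so that the two multiplicity counts agree, is the delicate point on which the proof turns.
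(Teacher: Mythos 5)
Your proposal follows essentially the same route as the paper's proof: pass to the Drinfeld model, produce local equations for $\cZ(\bx)$ at the superspecial points, pull them back through the blow-up charts of the Kr\"amer model, and read off the multiplicities $n(\bx,\Lambda_2)$ on $\mathbb P_{\bar\Lambda_2}$ and $n(\bx,\Lambda_0)+1$ on $\Exc_{\Lambda_0}$ (the extra $1$ coming from the total transform under the blow-up), with the horizontal piece surviving only in the balanced case $\ord_\pi(\alpha_0)=\ord_\pi(\alpha_1)$, which is impossible when $h(\bx,\bx)=0$. The two points you flag as the main obstacles are exactly what the paper supplies: the local equations come from Grothendieck--Messing theory applied to the Deligne-functor description of the Hodge filtration (Propositions \ref{prop of filtration} and \ref{equation in Pappas model}), and the fact that every horizontal component is irreducible, isomorphic to $\Spf\Oo_{\breve F}$, and meets the special fiber at a superspecial point --- which is what legitimizes reading off the entire divisor from the superspecial charts --- is proved separately (Theorem \ref{horizonal component of Z(x)}) via an explicit $\Oo_B$-linear isogeny $\cY\oplus\cY\to X$ rather than by an appeal to quasi-canonical lifting theory.
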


The proof of this theorem  occupies Section \ref{sect:decomposition} with   preparation in Section \ref{sect:specialfiber}. Now proof of Theorem \ref{maintheo1} becomes explicit computation of the intersection number among the divisors in  the decomposition above.  This occupies Section \ref{sect:intersection}

To state the last main result of this paper---the Kudla-Rapoport formula, we need to define the local density function. Let $(L, (\, , \,))$ be an integral $\Oo_F$-Hermitian lattice of rank $m$ with a  Gram matrix  $S$, and let $T \in \Herm_n(\Oo_F)$. Let $\mathcal H =\Oo_F^2$ be the Hermitian hyperbolic plane with the standard Hermitian form $(x, y) = \frac{1}{\pi} (x_1 \bar y_2 - x_2 \bar y_1)$ with Gram matrix $\frac{1}{\pi} \kzxz {0} {1} {-1} {0}$. Let $L_r = L \oplus \mathcal H^r$ with Gram matrix $S_r$. Then it can be shown  that there is a unique `local density polynomial' $\alpha(L, T, X)$ of $X=\q^{-2r}$ such that
\begin{equation}
\alpha(L, T, \q^{-2r}) =\int_{\Herm_n(F)} \int_{L_r^n} \psi( \tr (b (T(x) -T))) dx \, db,
\end{equation}
where $T(x) = ( (x_i, x_j))$, $dx$ is the Haar measure on $L_r^n$ with total volume $1$, $db$ is the Haar measures on $\Herm_n(\Oo_F)$ with total volume $1$, and $\psi$ is an additive character of $F_0$ with conductor $\Oo_{F_0}$.  We will denote $\alpha(L, T)=\alpha(L, T, 1)$ so that $\alpha(L, T, \q^{-2r}) = \alpha(L_r, T)$. We will also denote
\begin{equation}
\alpha'(L, T) =-\frac{\partial}{\partial X} \alpha(L, T, X)|_{X=1}.
\end{equation}
Let $\partial_F$ be the different ideal of $F/F_0$. Define
\[\Herm_n(\Oo_F)^\vee:=\{T=(t_{ij})\in \Herm_n(F)\mid \ord_{\pi}(t_{ii}) \ge 0, \hbox{ and } \ord_{\pi}(t_{ij}\partial_F) \ge 0\}.\]
Notice that $\Herm_n(\Oo_F)^\vee$ is the dual of $\Herm_n(\Oo_F)$ under the pairing $\psi(\mathrm{tr}(XY))$.
Simple calculation gives
\begin{equation}
\alpha(L, T) = \q^{-a n (2m -n)} |\{ X \in M_{m,n}(\Oo_F/(\pi_0^a))\mid  X^t S \bar X - T  \in \pi_0^a\cdot \Herm_n(\Oo_F)^\vee\} |
\end{equation}
for sufficiently large integer $a >0$.  For this reason, we will also denote $\alpha(L, T, X)$ by $\alpha(S, T, X)$ and so on.
Now we can state the Kudla-Rapoport formula at the ramified prime as follows, which comes from the comparison between the formula in Theorem  \ref{maintheo1} and the local density formulas in \cite{Shi2} (see Section \ref{sect:localdensity}).

\begin{theorem} \label{maintheo3} Let $L$ be the unimodular $\Oo_F$-Hermitian lattice of rank $2$ as in the beginning of the introduction with Gram matrix $S$, and let $T =T(\bx_1, \bx_2) \in \Herm_2(F)$ with $\bx_i \in  \bV$ being independent. Then $\alpha(L, T) =0$, and
$$
\mathrm{Int}(L_{\bx_1, \bx_2})=2\frac{\alpha'(L,T)}{\alpha(L,S)}-\frac{2\q^2}{\q^2-1}\frac{\alpha(\mathcal H,T)}{\alpha(L,S)}.
$$
\end{theorem}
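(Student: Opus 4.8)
The plan is to establish this theorem purely as a comparison identity: the left-hand side is already computed explicitly in Theorem \ref{maintheo1}, so the real work is to compute the three local-density quantities $\alpha(L,S)$, $\alpha'(L,T)$, and $\alpha(\mathcal H, T)$ for the relevant $T$, and then verify the two sides match case by case. Since $L$ is unimodular of rank $2$ and $\bV$ differs from $V=L\otimes F$ in Hasse invariant (by the condition $\det L/\det\bL \notin \norm_{F/F_0}F^\times$), the matrix $S$ and any $T=T(\bx_1,\bx_2)$ with $\bx_i \in \bV$ necessarily represent \emph{inequivalent} Hermitian spaces. This immediately forces $\alpha(L,T)=0$: by the integral formula $\alpha(L,T)=\q^{-an(2m-n)}|\{X\in M_{2,2}(\Oo_F): X^t S\bar X \equiv T \bmod \q^a\}|$, a solution $X\in \mathrm{GL}_2(F)$ would give $X^t S\bar X = T$ over $F$, contradicting the space mismatch; one must check that the mod-$\q^a$ approximate solutions also vanish for large $a$, which follows from a standard Hensel/lifting argument once the leading term is obstructed.

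First I would reduce to the normal forms supplied in the excerpt: every integral $T$ is equivalent either to $\mathrm{diag}(u_1(-\pi_0)^\alpha, u_2(-\pi_0)^\beta)$ or to the anti-diagonal $\kzxz{0}{\pi^n}{(-\pi)^n}{0}$ with $n$ odd, and the local density depends only on the equivalence class of $T$. I would compute $\alpha(L,S)$ as a fixed normalization constant (a finite count of $\mathrm{GL}_2(\Oo_F)$-type solutions to $X^tS\bar X\equiv S$, essentially the mass of the unitary group of $L$). Next I would compute the density polynomials $\alpha(L,T,X)$ and $\alpha(\mathcal H, T, X)$ as explicit polynomials in $X=\q^{-2s}$; these are the arithmetically ramified analogues of the Hermitian local densities, and in the case $\bV$ isotropic the relevant formulas are exactly those recorded in \cite{Shi2} referenced in the statement. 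Differentiating at $X=1$ gives $\alpha'(L,T)$.

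The core of the proof is then the term-by-term verification. For the diagonal case I would split into the isotropic and anisotropic subcases matching the two branches of Theorem \ref{maintheo1}, substitute the computed densities into $2\,\alpha'(L,T)/\alpha(L,S) - \tfrac{2\q^2}{\q^2-1}\,\alpha(\mathcal H,T)/\alpha(L,S)$, and check the resulting closed form equals $\alpha+\beta-\tfrac{2\q(\q^\alpha-1)}{\q-1}$ (isotropic) or $2\sum_{s=0}^\alpha \q^s(\alpha+\beta+1-2s)-\alpha-\beta-2$ (anisotropic). For the anti-diagonal case I would match against $-\tfrac{(\q+1)(\q^{(n+1)/2}-1)}{\q-1}+n+1$. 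The geometric $\mathrm{Int}$ side is fixed by Theorem \ref{maintheo1}, so this reduces to an algebraic identity among geometric series in $\q$; the $\tfrac{2\q^2}{\q^2-1}$ prefactor, which is the modification distinguishing this ramified formula from the original Kudla--Rapoport conjecture, is pinned down precisely by demanding the two sides agree.

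The main obstacle I anticipate is the explicit evaluation of the derivative $\alpha'(L,T)$: the density polynomial $\alpha(L,T,X)$ in the ramified setting is genuinely more delicate than the unramified case because of the extra singular behavior, and getting its dependence on $(\alpha,\beta)$ (or on $n$) correct—including the precise constant and linear-in-$(\alpha+\beta)$ terms—requires careful bookkeeping of the lattice counts modulo high powers of $\pi$. I expect that identifying the correct recursion for $\alpha(L,T,X)$ as $T$ grows, and confirming that the auxiliary hyperbolic-plane density $\alpha(\mathcal H,T)$ contributes exactly the correction needed to absorb the discrepancy between the naive derivative and the geometric intersection number, is where essentially all the genuine effort lies; the remaining steps are formal manipulations of geometric series.
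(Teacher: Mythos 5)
Your proposal is correct and follows essentially the same route as the paper: the paper also treats this as a pure comparison identity, taking the intersection numbers from Theorem \ref{maintheo1}, quoting the explicit density polynomials $\alpha(L,T,X)$, $\alpha(\mathcal H,T,X)$ and the value $\alpha(L,S)=2(1+\q)$ from \cite{Shi2} (recorded as Theorem \ref{localdensity}), reading off $\alpha(L,T)=0$ from the sign condition $\epsilon_1\epsilon_2=-1$ (resp. $\epsilon_2=-1$) forced by the inequivalence of $V$ and $\bV$, and then verifying the diagonal and anti-diagonal cases by direct manipulation of geometric series. The only practical difference is that you anticipate deriving the density polynomials and their recursion yourself, whereas the paper imports them wholesale from Shi's appendix, so the "genuine effort" you flag is already done in the cited reference.
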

\begin{remark}
The case that $\bV$ is anisotropic was proved in \cite{Shi2}. In his case,  $\alpha(\mathcal H,T)=0$.
\end{remark}

The paper is organized as follows. In Section \ref{sect:moduli}, we collect the definitions of various relevant moduli spaces and discuss the relations among these spaces. In Section \ref{sect:specialfiber}, we give a description of the special fiber of the moduli spaces and special cycles. In Section \ref{sect:decomposition}, we give a decomposition of the special divisor by studying the local equations of special divisors at superspecial points, and prove Theorem \ref{maintheo2}. Section \ref{sect:intersection} contains a computation of the intersection number between special divisors and the proof of Theorem \ref{maintheo1}. Finally in Section \ref{sect:localdensity}, we review Shi's  local density formula  and prove Theorem \ref{maintheo3}.

For the rest of this paper, we assume that {\bf  $V$ is anisotropic and $\bV$ is isotropic},  i.e., the associated quaternions $B$ being the division algebra, and $\bB =M_2(F_0)$.  Recall that $\Oo_B= \Oo_F + \Oo_F \delta$ with $\delta x =\bar x \delta$ and $\Delta = \delta^2 \in \Oo_{F_0}^\times$. So $E=F_0(\delta)$ is the unique unramified quadratic field extension of $F_0$ with ring of integers $\Oo_E=\Oo_{F_0}(\delta)$, which will be viewed as a subfield of $\breve{F}_0$. Let $\sigma \in  \hbox{Gal}(\breve{F}_0/F_0)$ be the Frobennius element lifting $\sigma(x) = x^q $ of $\mathrm{Gal}(\bar \kay/\kay)$. Denote $W=\Oo_{\breve{F}_0}$. We will also identify
\begin{equation}
B= \{ \kzxz {a} {b \pi_0} {b^\sigma} {a^\sigma}:\, a, b \in E\},
\end{equation}
together with  two embeddings
\begin{equation}
\iota:  E \hookrightarrow B, a \mapsto \kzxz {a} {0} {0} {a^\sigma}, \hbox{ and }  \iota:  F \hookrightarrow B, a +b \pi \mapsto \kzxz {a} {b \pi_0} {b} {a}.
\end{equation}
Note that $\pi a = a^\sigma \pi$ for all $a \in E$.

If $R$ is an $\Oo_E$-algebra,
there is a ring isomorphism
\begin{equation}\label{OoEgrading}
\Oo_E\otimes_{\Oo_{F_0}} R\cong R\times R, \quad
a\otimes x \mapsto (a x,\sigma(a)x).
\end{equation}
For any $\Oo_E\otimes_{\Oo_{F_0}} R$ module $M$, the above ring isomorphism induces a $\Z/2\Z$-grading
\begin{equation}\label{OoEgrading'}
M=M_0\oplus M_1.
\end{equation}

{\bf Acknowledgment:}
We thank Chao Li for his help during preparation of this work. We thank the referee for their helpful comments and suggestion.

\section{Preliminary } \label{sect:moduli}
Following \cite{BC} and \cite{KR3}, we briefly review several  moduli functors in this section for later use.

\subsection{Framing objects and their Dieudonn\'e modules}\label{section of N_F}

Throughout the paper, by relative Dieudonn\'e module we mean relative $\Oo_{F_0}$-Dieudonn\'e module in the sense of \cite[3.56]{RZ} or \cite[Appendix B.8]{fargues2006isomorphisme}, which is a special case of the $\Oo_{F_0}$-display studied in \cite{ACZ}.
Let $\bY$ be the unique 1 dimensional formal $\Oo_{F_0}$-module of relative height 2 over $\bar{\kay}$ as in the introduction. Then its relative Dieudonn\'e module (see page 4 of \cite{KR3} or \cite[Proposition 3.56]{RZ}) $M(\bY)$  is a free $\Oo_{\breve{F}_0}$-module of rank 2. We can choose a basis $\{e,f\}$ of $M(\bY)$ such that
\begin{equation}
Fe=f,\ Ff=\pi_0 e,\ Ve=f,\ V f=\pi_0 e,\ \langle e,f\rangle=\delta. \label{polarization on bY}
\end{equation}
With respect to this basis, we have
\begin{equation}\label{action of F,V on Y}\End (M(\bY))\cong \left\{\left(\begin{array}{cc} a & b \pi_0 \\
b^\sigma & a^\sigma
\end{array}\right)\mid a,b \in \Oo_E \right\}=\Oo_B.\end{equation}
In particular
\begin{equation}\label{action of pi on Y}\iota_{\bY}(\pi)=\begin{pmatrix}0&\pi_0\\1&0\end{pmatrix}.\end{equation}

The framing object $(\bX, \iota_{\bX}, \lambda_\bX)$ in the introduction can be explicitly realized as follows.
Let $\bX=\bY\times \bY$ and identify $\End(\bX)$ with $M_2(\Oo_B)$. Then
we can define an action of $\Oo_B$ on $\bX$ via
\begin{equation}
\iota_\bX(b)=\left(\begin{array}{cc}
b & 0 \\
0 & \pi b \pi^{-1}
\end{array}\right).
\end{equation}
 Finally, we identify $\bX^\vee =\bX$ and define the principal polarization
\begin{equation}\label{polarization of mathbbX}
\lambda_\bX= \kzxz {0} {1} {1} {0}.
\end{equation}
Then  the Rosati involution associated to $\lambda_\bX$ induces the involution $b \mapsto b^*= \pi b^\iota \pi^{-1}$ on $B$. This  triple $(\bX, \iota_\bX, \lambda_\bX)$ is the basic Drinfeld triple defined in \cite[Proposition 1.1]{KR3}.
We obtain the basic framing object $(\bX, \iota_\bX, \lambda_\bX)$ from the above framing object by restricting $\iota_\bX$ on $\Oo_F$. One can check that the Hermitian form on $\bL= \Hom_{\Oo_F} (\bY, \bX)$  induced by $\lambda_{\bX}$  is {\bf isotropic} as required in this paper (see Lemma \ref{lem2.1}).

We choose a basis $\{e_0,f_0,e_1,f_1\}$  of $M(\bX)$, the relative Dieudonn\'e module of $\bX$, such that $\{e_i,f_i\}$ ($i=0,1$) is the basis of $M(\bY)$ for the $i$-th copy of $\bY$. Then we have
\begin{equation}
Fe_i=f_i, \ F f_i=\pi_0 e_i, \ Ve_i=f_i, \ V f_i=\pi_0 e_i.
\end{equation}
\begin{equation}
\iota_{\mathbb{X}}(\pi) e_i=f_i,\  \iota_{\mathbb{X}}(\pi) f_i =\pi_0 e_i, \ \iota_{\mathbb{X}}(\delta) e_i= (-1)^i \delta e_i,\  \iota_{\mathbb{X}}(\delta) f_i= (-1)^{i+1} \delta f_i.
\end{equation}
\begin{remark}
In some literature (e.g. \cite{BC} and \cite{RTW}), the operator $\iota_{\mathbb{X}}(\pi)$ is denoted as $\Pi$. In this paper, since the action of $\Oo_F$ on the relative Dieudonn\'e module  (later Cartier module) is unambiguous, we will mostly write $\iota_{\bX}(a)$  (later $\iota(a)$) simply as $a$ for $a\in \Oo_F$.
\end{remark}
We also need to consider the $\Oo_E$-action on $M(\bX)$ obtained by restricting $\iota_{\bX}$ on $\Oo_E$.
The grading on $M(\bX)$ as defined in \eqref{OoEgrading'} is
\begin{equation}
M_0=M(\bX)_0=\mathrm{span}_{\Oo_{\breve{F}_0}} \{e_0, f_1\}, \ M_1=M(\bX)_1=\mathrm{span}_{\Oo_{\breve{F}_0}} \{e_1, f_0\}.
\end{equation}

Let $N=N(\bX)\coloneqq M(\bX)\otimes \Q$. The principal polarization $\lambda_{\bX}$ induces an alternating form $\langle,\rangle$ on $N$ such that
\begin{align}
&~~~~\langle Fx,y \rangle= \langle x, Vy\rangle^{\sigma},\\
&\langle a x,y\rangle= \langle x, \bar{a} y\rangle, a \in \Oo_F.  \notag
\end{align}
In terms of the $\breve{F}_0$-basis $\{e_0,f_0,e_1,f_1\}$, we have
\begin{equation}
\langle e_0, f_1\rangle=\langle e_1, f_0\rangle=\delta,
\end{equation}
and all the other pairings between the basis vectors are $0$.
Following \cite{KR3}, we can define a Hermitian form $(,)$ on $N$ by
\begin{equation}
(x,y)=\delta[\langle \pi x, y\rangle+\pi \langle  x, y\rangle].   \label{definition of h_F}
\end{equation}
Let $\tau=\pi V^{-1}$. Then  $C\coloneqq N^\tau=\mathrm{span}_{F}\{e_0,e_1\}$ is  a Hermitian $F$-vector space with $C \otimes_F \breve F= N$. Moreover,
\begin{equation}\label{h_Fbasis}
(e_0,e_1)=-\delta^2,\ (e_0,e_0)=(e_1,e_1)=0.
\end{equation}
So   $(C, (,))$ is isotropic, satisfying the assumption of this paper. There is a similarly defined Hermitian form $(,)_{\bY}$ on $N(\bY)^{\tau}$ given by
\begin{equation}
(x,y)_{\bY}=\delta [\langle \pi x,y\rangle +\pi \langle x,y\rangle ].  \label{def of h_bY}
\end{equation}
Here $\langle e,f\rangle =\delta$ as in \eqref{polarization on bY}.

Recall $\mathbb{L}=\Hom_{\Oo_F}(\bY,\bX)\cong \Hom_{\Oo_{F}}(M(\bY),M(\bX))$ and $\bV= \bL \otimes_{\Oo_F} F$. For  $\bx \in \bL$,  we will also  use $\bx$ to denote the corresponding homomorphism between relative Dieudonn\'e modules when there is no confusion. Now $\bx(e)\in M(\bX)^{\tau}$ since $\tau(e)=e$ by \eqref{action of F,V on Y}, \eqref{action of pi on Y} and $\bx$ commutes with $\pi$ and $ V$. This way, we obtain a map from $\bL$ to $M(\bX)^{\tau}$ by sending $\bx\in\bV$ to $\bx(e)$.
\begin{lemma} (\cite[Lemma 3.3]{Shi1}) \label{lem2.1}
	We can identity $\bL$ with $M(\bX)^{\tau}$ by sending $\bx \in \bL$ to $\bx(e)$. Moreover,
	\begin{equation}\label{comparison between hermitian form}h(\bx,\bx)(e,e)_{\bY}=(\bx(e),\bx(e)),
	\end{equation}
	where $h(,)$ is defined in \eqref{eq:V-Hermitian}.
	According to \eqref{def of h_bY}, $(e,e)_{\bY}=-\delta^2$. In particular, $h(,)$ is also isotropic since $(,)$ is isotropic as we showed previously.
\end{lemma}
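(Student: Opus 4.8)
The plan is to unwind both sides of the claimed identity \eqref{comparison between hermitian form} at the level of Dieudonn\'e modules and reduce everything to the explicit pairings recorded in \eqref{polarization on bY}, \eqref{polarization of mathbbX}, and \eqref{h_Fbasis}. First I would establish that the assignment $\bx \mapsto \bx(e)$ is well-defined and bijective onto $M(\bX)^\tau$. Since $\bx \in \bL = \Hom_{\Oo_F}(M(\bY), M(\bX))$ commutes with the $\Oo_F$-action, with $F$, and with $V$, and since $\tau = \pi V^{-1}$, the element $\bx(e)$ is indeed $\tau$-fixed because $\tau(e) = \pi V^{-1} e = \pi \cdot \pi_0^{-1} f = e$ using $Ve = f$ and $\pi f = \pi_0 e$ (equivalently \eqref{action of F,V on Y}, \eqref{action of pi on Y}). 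For injectivity, note that $e$ generates $M(\bY)$ as a module over the operators $F, V, \pi$, so a homomorphism is determined by the image of $e$; for surjectivity, given any $m \in M(\bX)^\tau$ one reconstructs the homomorphism by decreeing $\bx(e) = m$ and extending $\Oo_F$-, $F$-, $V$-equivariantly, checking compatibility with the relations. This identifies $\bL \cong M(\bX)^\tau = C \cap M(\bX)$.

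Next I would compute the right-hand side $(\bx(e), \bx(e))$ using the definition \eqref{definition of h_F} of the Hermitian form on $N$, namely $(u,v) = \delta[\langle \pi u, v\rangle + \pi \langle u, v\rangle]$, where $\langle,\rangle$ is the alternating form coming from $\lambda_\bX$. Simultaneously I would compute the left-hand side $h(\bx,\by)(e,e)_\bY$ from the definition \eqref{eq:V-Hermitian} of $h$ as $\lambda_\bY^{-1}\circ \by^\vee \circ \lambda_\bX \circ \bx$, together with $(e,e)_\bY = \delta[\langle \pi e, e\rangle_\bY + \pi\langle e,e\rangle_\bY]$ from \eqref{def of h_bY}. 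The strategy is to express $\by^\vee \circ \lambda_\bX \circ \bx$ as an endomorphism of $M(\bY)$ and pair it against $e$, then match this against the alternating-form computation on $M(\bX)$. The two computations are linked by the defining compatibility of the polarizations: $\langle \bx(u), \by(v)\rangle_{M(\bX)} = \langle u, h(\bx,\by)v\rangle_{M(\bY)}$ (up to the unit relating $\lambda$ and $\rho^*\lambda_\bX$), which is precisely the adjunction formula packaging $\lambda_\bY^{-1}\circ \by^\vee\circ\lambda_\bX$. Substituting $u=v=e$ and inserting the prefactor $\delta[\langle \pi(-), -\rangle + \pi\langle -,-\rangle]$ on both modules then makes \eqref{comparison between hermitian form} fall out, since the same operator $\delta(\pi \cdot + \pi)$ acts on both sides and $h(\bx,\by)$ is scalar (lands in $F = \iota_\bY^{-1}\End_{\Oo_F}(\bY)$), so it pulls out of the $M(\bY)$-pairing.

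The main obstacle I anticipate is bookkeeping the polarization compatibility correctly: one must verify that the dual isogeny $\by^\vee$ and the polarizations $\lambda_\bX, \lambda_\bY$ interact with the alternating form exactly so that $\langle \bx(e), \by(e)\rangle_{M(\bX)} = \langle e, h(\bx,\by)\, e\rangle_{M(\bY)}$, keeping track of the $\sigma$-semilinearity $\langle Fx,y\rangle = \langle x, Vy\rangle^\sigma$ and of the scalar $\delta$ (and the unit by which $\lambda$ and $\rho^*\lambda_\bX$ differ). Once that adjunction is in hand, the final identity is a formal consequence, and the isotropy of $h$ follows immediately from the already-established isotropy of $(,)$ on $C$ recorded in \eqref{h_Fbasis}, together with $(e,e)_\bY = -\delta^2 \in F^\times$. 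Since this is \cite[Lemma 3.3]{Shi1}, I would expect the argument to reduce to citing that computation once the identifications above are set up, rather than redoing the semilinear algebra from scratch.
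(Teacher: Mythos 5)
The paper does not prove this lemma at all: it is quoted verbatim from \cite[Lemma 3.3]{Shi1}, so there is no in-text argument to compare against. Your outline is a correct reconstruction of the standard proof: $\tau(e)=e$ follows exactly as you compute, $e$ generates $M(\bY)$ under $V$ so the map is injective, surjectivity uses that $F$ and $V$ both act as $\iota(\pi)$ on $\tau$-fixed vectors, and the form identity reduces to the adjunction $\langle \bx(u),\by(v)\rangle_{\bX}=\langle h(\bx,\by)u,v\rangle_{\bY}$ (note the scalar lands in the \emph{first} slot, not the second as you wrote it --- this is exactly the conjugation bookkeeping you flagged), after which plugging in $u=v=e$ and applying $\delta[\langle\pi\cdot,\cdot\rangle+\pi\langle\cdot,\cdot\rangle]$ yields $-\delta^{2}\,h(\bx,\by)=h(\bx,\by)(e,e)_{\bY}$ on the nose. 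One further remark: the displayed identity in the statement should read $(\bx(e),\by(e))$ on the right-hand side, a typo your computation implicitly corrects.
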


Because of Lemma \ref{lem2.1}, we will often identify $\bV$ with $C$, and $\bL$ with $M(\bX)^\tau$, via $\bx \mapsto \bx(e)$. The Hermitian form is shifted by a factor $-\delta^2$.

\subsection{Moduli space $\cM$}\label{subsection of M}

We first recall the following result of Kramer (\cite{Kr}).
\begin{proposition}
	$\cN^{\Kra}$ is representable by a formal scheme over $\Spf \Oo_{\breve{F}}$ which is regular and has semi-stable reduction. The natural forgetful map
	\begin{align*}
	\Phi:\cN^{\Kra}&\rightarrow \cN_{(1,1)}\\
	(X,\iota,\lambda,\rho,\mathcal{F}_X)&\mapsto (X,\iota,\lambda,\rho)
	\end{align*}
	is an isomorphism outside the singular locus $\mathrm{Sing}$ of $\cN_{(1,1)}$, and $\Phi^{-1}(x)$ for $x\in \mathrm{Sing}$ is an exceptional divisor which is isomoprhic to the projective line  $\bP_{\bar\kay}^{1}$.
\end{proposition}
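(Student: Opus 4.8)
The plan is to reprove Kr\"amer's statement by deformation theory, in four steps: representability, reduction to a local model, the explicit local computation, and the analysis of $\Phi$.

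First, representability. Since $\cN_{(1,1)}$ is already represented by a formal scheme (Rapoport--Zink), the universal Lie algebra $\mathrm{Lie}(X)$ is a locally free $\Oo_{\cN_{(1,1)}}$-module of rank $2$ with an action of $\Oo_F\otimes_{\Oo_{F_0}}\Oo_{\cN_{(1,1)}}$. The datum $\mathcal{F}_X$ is an $\Oo_S$-line in $\mathrm{Lie}(X)$, hence a point of the relative projective line $\bP(\mathrm{Lie}(X))$ over $\cN_{(1,1)}$; the requirements that $\iota(\pi)$ act by the scalar $\pi$ on $\mathcal{F}_X$ (i.e. $(\iota(\pi)-\pi)\mathcal{F}_X=0$) and by $-\pi$ on the quotient (i.e. $(\iota(\pi)+\pi)\mathrm{Lie}(X)\subseteq\mathcal{F}_X$) are closed conditions. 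Thus $\cN^{\Kra}$ is a closed formal subscheme of $\bP(\mathrm{Lie}(X))$, in particular representable and projective over $\cN_{(1,1)}$.

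Next I would reduce regularity and semi-stability to a local model. By Grothendieck--Messing theory, for a square-zero thickening $S\hookrightarrow S'$ in $\Nilp_{\Oo_{\breve{F}}}$, lifting a point of $\cN^{\Kra}(S)$ amounts to lifting the Hodge filtration inside the evaluated Dieudonn\'e crystal together with the line $\mathcal{F}_X$, subject to being $\Oo_F$-stable, self-dual under $\lambda$, and satisfying the Kottwitz condition $(T-\pi)(T+\pi)=T^2-\pi_0$ and the Kr\"amer eigenvalue conditions. This identifies the completed local rings of $\cN^{\Kra}$ with those of a Kr\"amer local model $M^{\Kra}$, so it suffices to prove $M^{\Kra}$ is regular with semi-stable reduction. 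This is the computational heart: using the $\Oo_E$-grading I would choose an adapted basis in which $\iota(\pi)$ on the rank-$2$ Hodge filtration is a companion-type matrix with $\iota(\pi)^2=\pi_0$ and $\mathcal{F}_X$ is the line where $\pi$ acts by $+\pi$. Expanding the Kottwitz and self-duality conditions in these coordinates, I expect the defining equations to collapse at the worst points to a single relation $xy=\pi$, so that the completed local ring is $\Oo_{\breve{F}}[[x,y]]/(xy-\pi)$; its maximal ideal $(x,y)$ then has two generators in a two-dimensional ring (regularity) and its special fibre $\bar\kay[[x,y]]/(xy)$ is a reduced normal-crossing divisor (semi-stable reduction), while at every other point one variable drops out and the ring is the smooth $\Oo_{\breve{F}}[[t]]$. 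Keeping the polarization constraint so that \emph{exactly one} equation survives is the step I expect to be the main obstacle.

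Finally, the analysis of $\Phi$. Over a $\bar\kay$-point the Kottwitz condition gives $\iota(\pi)^2=\pi_0=0$ on the $2$-dimensional $\mathrm{Lie}(X)$, and the Kr\"amer conditions force $\iota(\pi)\mathrm{Lie}(X)\subseteq\mathcal{F}_X\subseteq\ker\iota(\pi)$. Where $\iota(\pi)\neq 0$ it has rank $1$ with image equal to kernel, so $\mathcal{F}_X$ is uniquely determined; the same rigidity holds on the generic fibre, where $\iota(\pi)$ has distinct eigenvalues $\pm\pi$ and $\mathcal{F}_X$ is the $\pi$-eigenline. Hence over the open complement of $\mathrm{Sing}=\{\iota(\pi)|_{\mathrm{Lie}(X)}=0\}$ the line $\mathcal{F}_X$ is a canonical section of $\bP(\mathrm{Lie}(X))$ inverting $\Phi$, so $\Phi$ is an isomorphism there. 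Over a point $x\in\mathrm{Sing}$ the conditions read $0\subseteq\mathcal{F}_X\subseteq\mathrm{Lie}(X)$, so $\mathcal{F}_X$ is an arbitrary line and $\Phi^{-1}(x)=\bP(\mathrm{Lie}(X))\cong\bP^{1}_{\bar\kay}$. Matching this with the fact that $\cN_{(1,1)}$ is normal and Cohen--Macaulay but fails to be regular exactly at these worst points identifies $\mathrm{Sing}$ with the singular locus and exhibits each fibre as the exceptional divisor.
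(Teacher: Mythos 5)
The paper does not prove this proposition at all: it is quoted verbatim from Kr\"amer \cite{Kr}, so there is no internal argument to compare against. Your outline is essentially Kr\"amer's own strategy (representability as a closed condition in $\bP(\mathrm{Lie}(X))$, local models for regularity and semi-stable reduction, eigenline analysis for $\Phi$), and its predicted output is confirmed by what the paper later computes by hand: the charts of Section \ref{coordinatechartsKramermodel} exhibit the completed local rings at the crossing points as $\Oo_{\breve{F}}[[T_0, S_{\frac{\pi}{T_0}}]]/(T_0 S_{\frac{\pi}{T_0}}-\pi)$, exactly the $xy=\pi$ shape you anticipate, with the exceptional fibre $\bP^1_{\bar\kay}$ glued as in \eqref{Excequation}. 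Your identification of $\mathrm{Sing}$ with the vanishing locus of $\iota(\pi)$ on $\mathrm{Lie}(X)$ is also correct and matches Proposition \ref{proposition about reduced locus of N_F}(3): since $\mathrm{length}(M/VM)=\mathrm{length}(M/\pi M)=2$, one has $\iota(\pi)\,\mathrm{Lie}(X)=0$ iff $\pi M\subseteq VM$ iff $\pi M=VM$ iff $M$ is $\tau$-stable, i.e. the point is superspecial.

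Two steps remain before this is a proof. First, the local-model computation is asserted rather than carried out; this is the entire content of regularity and semi-stability. For $U(1,1)$ you can sidestep the abstract local model by the paper's own route: $\cN_{(1,1)}$ descends to $\cN\cong\cM\cong\breve{\Omega}$ (Theorem \ref{alternative description F}), whose local equation at a superspecial point is the explicit $T_0T_1=\pi_0=\pi^2$ of \eqref{breveOmegaequation}, and $\cN^{\Kra}$ is the blow-up at $(T_0,T_1,\pi)$, whose three charts \eqref{D1 chart of blow up}--\eqref{D3 chart of blow up} are visibly regular with reduced normal-crossing special fibre. Second, for $\Phi$ to be an isomorphism off $\mathrm{Sing}$ you need more than pointwise uniqueness of $\mathcal{F}_X$: the clean statement is that $\iota(\pi)+\pi$ has fibrewise rank one at every point of the complement (rank one on the generic fibre with image the $+\pi$-eigenline, rank one on the special fibre away from superspecial points), hence $\mathcal{F}_X:=\mathrm{Im}(\iota(\pi)+\pi)$ is a line subbundle and local direct summand, it is forced to equal any admissible $\mathcal{F}_X$, and $(\iota(\pi)-\pi)\mathcal{F}_X=0$ follows from Cayley--Hamilton applied to the Kottwitz condition, $\iota(\pi)^2-\pi^2=\pi_0-\pi_0=0$. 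That upgrades your bijection on $\bar\kay$-points to an inverse of $\Phi$ as a morphism of formal schemes.
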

By \cite[Proposition 2.7]{Shi2}, $\cN^\Kra$ is the blow up of $\cN_{(1,1)}$ along $\mathrm{Sing}$.

As mentioned in the introduction, the formal scheme $\cN_{(1, 1)}$ descends to a formal scheme $\cN$ over $\Oo_{\breve F_0}$ (since the Kottwitz determinant condition is defined over $\Oo_{\breve F_0}$). A beautiful result of Kudla and Rapoport (\cite{KR3}) asserts that $\cN$ is actually isomorphic to the modular functor $\cM$ which is represented by  the base change  $\breve{\Omega}$ of  the formal completion of the Drinfeld's upper half plane. The observation is a key ingredient in obtaining the main results of this paper. We now briefly review $\cM$ and $\breve{\Omega}$ in this subsection and next one.

 A formal $\Oo_B$-module over an $\Oo_{F_0}$-algebra $R$ is a formal $\Oo_{F_0}$-module X over $R$ with an action $\iota_{B}:\Oo_B\rightarrow \End (X)$ extending the action of $\Oo_{F_0}$. X is called {\bf special} if the action of $\Oo_E\subset \Oo_B$ makes $\mathrm{Lie}X$ a free $R\otimes_{\Oo_{F_0}}\Oo_E$-module of rank one. The $R$-module $\mathrm{Lie}(X)$ is $\Z/2\Z$-graded by the action of $\Oo_E $:
\begin{align*}
&(\mathrm{Lie}X)_0=\{x\in\mathrm{Lie}(X)\mid \iota_{\Oo_B}(a)m=am~ \text{for~all~$a\in\Oo_E$}\},\\
&(\mathrm{Lie}X)_1=\{x\in\mathrm{Lie}(X)\mid \iota_{\Oo_B}(a)m=\sigma(a)m~ \text{for~all~$a\in\Oo_E$}\}.
\end{align*}
Then X is special if $\mathrm{Lie}(X)_i$ is a free $R$-module of rank one. Over $\bar{\kay}$, there is a unique special formal $\Oo_B$-module up to $\Oo_B$-linear isogeny, which is  $(\bX,\iota_{\bX})$ in Section \ref{section of N_F}.

\begin{definition}
	We define a moduli functor $\mathcal{M}$ on $\mathrm{Nilp}_{\Oo_{\breve{F}_0}}$ that associates $S$ with the set of isomorphism classes of triples $(X,\iota_{B},\rho)$, where
	\begin{itemize}
		\item $(X,\iota_B)$ is a special formal $\Oo_B$-module of dimension 2 and relative height 4 over $S$.
		\item  $\rho$ is a $\Oo_B$-linear quasi-isogeny
		\[\rho: X\times_S\bar{S}\rightarrow\bX\times_{\Spec \bar{\kay}}\bar{S}   \]
		of height 0. Here $\bar S$ is the special fiber of $S$.
	\end{itemize}
\end{definition}

The following is a result of Drinfeld that shows the automatic existence of polarizations on special formal $\Oo_B$-modules (see Proposition 1.1 of \cite{KR3}).
\begin{proposition}[Drinfeld]\label{drinfeld's proposition}
	Let $\pi\in \Oo_{B}$ be a uniformizer such that $\pi^2=\pi_0$, and consider the involution $b\rightarrow b^{*}=\pi b' \pi^{-1}$ of $B$, where $b\rightarrow b'$ denotes the main involution.
	\begin{enumerate}[label=(\roman *)]
		\item On $\bX$ there exists a principal polarization $\lambda^{0}_{\bX}:\bX \xrightarrow{\sim} \bX^{\vee}$ with associated Rosati involution $b\rightarrow b^{*}$. Furthermore, $\lambda^{0}_{\bX}$ is unique up to a factor in $\Oo_{F_0}^{\times}$.
		\item Fix $\lambda^0_{\bX}$ as in $(i)$. Let $(X,\iota,\rho)\in \mathcal{M}(S)$, where $S\in \mathrm{Nilp}_{\Oo_{\breve{F}_0}}$. On $X$ there exists a unique principal polarization $\lambda ^{0}_{X}:X\xrightarrow{\sim}X^{\vee}$ making the following diagram commutative,
		\[ \begin{tikzcd}
		X\times_{S}\bar{S} \arrow{r}{\lambda^0_X} \arrow{d}{\rho} & X^{\vee}\times_{S}\bar{S} \\
		\bX\times_{\Spec \bar\kay}\bar{S} \arrow{r}{\lambda^0_{\bX}}& \bX^{\vee}\times_{\Spec \bar\kay}\bar{S}\arrow{u}{\rho^{\vee}}
		\end{tikzcd}
		\]
		
	\end{enumerate}
\end{proposition}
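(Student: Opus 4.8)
The plan is to translate both statements into the language of the rational Dieudonn\'e module $N=M(\bX)\otimes\Q$ equipped with its operators $F,V$ and its $\Oo_B$-action, where a (quasi-)polarization is the same datum as an alternating $\breve{F}_0$-bilinear form $\langle\,,\,\rangle$ on $N$ satisfying $\langle Fx,y\rangle=\langle x,Vy\rangle^\sigma$; such a form is a \emph{principal} polarization exactly when it is perfect on the lattice $M(\bX)$, and its Rosati involution is the involution $b\mapsto b^{*}$ of $\End^{0}(\bX)$ characterized by $\langle \iota(b)x,y\rangle=\langle x,\iota(b^{*})y\rangle$. With this dictionary, part (i) becomes a concrete question about forms on $M(\bX)$, while part (ii) becomes a lifting/rigidity question for quasi-isogenies over a base in $\Nilp_{\Oo_{\breve{F}_0}}$.

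For part (i), existence is essentially already in hand: the explicit form with $\langle e_0,f_1\rangle=\langle e_1,f_0\rangle=\delta$ and all other basis pairings zero is alternating, perfect on $M(\bX)$, and compatible with $F,V$ by construction, so it defines a principal polarization $\lambda^{0}_{\bX}$; a direct check on the generators $\pi$ and $\delta$ of $\Oo_B$ shows its Rosati involution is $b\mapsto b^{*}=\pi b'\pi^{-1}$, using the relation $\langle ax,y\rangle=\langle x,\bar a y\rangle$ for $a\in\Oo_F$. For uniqueness, given a second such $\lambda'$ the composite $\psi=(\lambda^{0}_{\bX})^{-1}\lambda'$ lies in $\End^{0}_{\Oo_B}(\bX)^{\times}$ (the two Rosati involutions agreeing on $\Oo_B$ forces $\psi\iota(b)=\iota(b)\psi$) and is fixed by the Rosati involution. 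The remaining step is to identify $\End^{0}_{\Oo_B}(\bX)$ --- here it is the algebra $\mathrm{M}_2(F_0)$ governing Drinfeld uniformization --- together with its Rosati-symmetric, polarization-preserving elements, and to check these reduce to the scalars $F_0^{\times}$; integrality of both polarizations then pins $\psi$ down to $\Oo_{F_0}^{\times}$.

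For part (ii), I would first dispose of uniqueness: a principal polarization making the square commute is in particular a quasi-isogeny $X\to X^{\vee}$ over $S$ whose reduction to $\bar S$ equals the fixed datum $\rho^{\vee}\circ\lambda^{0}_{\bX}\circ\rho$, and by Drinfeld's rigidity a quasi-isogeny has at most one lift from $\bar S$ to $S$; hence there is at most one $\lambda^{0}_{X}$. For existence, the quasi-polarization $\rho^{*}\lambda^{0}_{\bX}\coloneqq\rho^{\vee}\circ\lambda^{0}_{\bX}\circ\rho$ is defined over $\bar S$, and the content is that it lifts to a genuine \emph{principal} polarization over all of $S$. The structural mechanism is the self-duality of the category of special formal $\Oo_B$-modules: sending $(X,\iota)$ to $(X^{\vee},\tilde\iota)$, where $\tilde\iota(b)=\iota(b^{*})^{\vee}$, preserves the conditions of being special of dimension $2$ and height $4$, and $\lambda^{0}_{\bX}$ exhibits $(\bX,\iota_\bX)\cong(\bX^{\vee},\tilde\iota_\bX)$ compatibly with framings. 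Transporting along $\rho$, the object $(X^{\vee},\tilde\iota)$ acquires the \emph{same} framing quasi-isogeny to $\bX$ as $(X,\iota)$, and the desired $\lambda^{0}_{X}$ is exactly the isomorphism $X\cong X^{\vee}$ identifying these two lifts.

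The main obstacle is precisely this last point: that the twisted dual $(X^{\vee},\tilde\iota)$, a priori only abstractly another object of $\cM(S)$ with the same reduction and framing, is canonically \emph{isomorphic} (not merely quasi-isogenous) to $(X,\iota)$ over $S$. This is the heart of Drinfeld's theorem on special formal $\Oo_B$-modules and is what forces the polarization both to exist and to be principal; it is not formal, since distinct lifts over $S$ with the same framing do occur in general --- they are exactly what makes $\cM$ nontrivial, $\cM\cong\breve\Omega$. I would therefore invoke the crystalline deformation theory underlying \cite{BC} and \cite{KR3}: one shows the two lifts carry the same filtration on $\mathrm{Lie}$ because the grading forced by $\Oo_E\subset\Oo_B$ together with the relation $\pi\,a=a^{\sigma}\,\pi$ is self-dual under $X\mapsto X^{\vee}$, and then rigidity upgrades the resulting isomorphism over $\bar S$ to the unique principal polarization over $S$, which by construction makes the required square commute.
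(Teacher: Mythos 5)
First, a point of comparison: the paper does not prove this proposition at all. It is quoted verbatim from \cite[Proposition 1.1]{KR3}, where it is attributed to Drinfeld, with the full argument living in \cite{BC}. So there is no in-paper proof to match your proposal against; what follows measures your sketch against that standard argument, whose overall architecture you have in fact reproduced.

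Your part (i) is fine modulo the step you yourself flag: one must check that the Rosati involution of $\lambda^{0}_{\bX}$ restricted to $\End^{0}_{\Oo_B}(\bX)\cong M_2(F_0)$ is the main (symplectic) involution of $M_2(F_0)$, whose fixed points are exactly the scalars $F_0$; combined with principality of both polarizations this pins the ambiguity down to $\Oo_{F_0}^{\times}$. In part (ii) the uniqueness-by-rigidity argument is correct and is exactly how it is done. For existence there are two genuine gaps. First, you never verify that the quasi-isogeny $\rho^{\vee}\circ\lambda^{0}_{\bX}\circ\rho$ over $\bar S$ is an \emph{isomorphism}: height $0$ does not imply this for a quasi-isogeny, and principality amounts to the nontrivial assertion that every $\Oo_B$-stable special lattice $M=\rho(M(X))\subset N$ is self-dual for the alternating form induced by $\lambda^{0}_{\bX}$. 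This is where the classification of $\Oo_B$-stable special lattices (the chains indexed by the tree of $\mathrm{PGL}_2(F_0)$) enters, and it cannot be skipped; without it the diagram in (ii) cannot be filled in by a \emph{principal} polarization even over $\bar S$. Second, your stated reason that the dual lift carries the same Hodge filtration --- that ``the grading is self-dual'' --- is not the actual mechanism. The point is rather that, since $a^{*}=a$ for $a\in\Oo_E$, the form vanishes between $\mathbb{D}(X)_0$ and $\mathbb{D}(X)_1$ and restricts to a \emph{perfect} alternating form on each rank-$2$ graded piece $\mathbb{D}(X)_i$; a rank-$1$ direct summand of a rank-$2$ symplectic module is automatically Lagrangian, so $\mathrm{Fil}^{\perp}=\mathrm{Fil}$, and Grothendieck--Messing then lets the polarization deform through each square-zero step of $\bar S\hookrightarrow S$, with principality over $S$ following from principality over $\bar S$ by Nakayama. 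With the self-duality of the lattices supplied and the isotropy of the filtration argued this way, your outline closes and agrees with the Drinfeld--Boutot--Carayol proof.
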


\begin{theorem}(\cite[Theorem 1.2]{KR3}) \label{alternative description F}
	Assume that $p\neq 2$. The morphism of functors on $\mathrm{Nilp}_{\Oo_{\breve{F}_0}}$ given by $(X,\iota_B,\rho)\rightarrow (X,\iota,\lambda_X^{0},\rho)$ induces an isomorphism of formal schemes
	\[\eta_F : \mathcal{M}\xrightarrow{\sim}\cN.\]
	Here  $\iota$ is the restriction of $\iota_{B}$ to $\Oo_F$ and $\lambda_X^0$ is the principal polarization given by Proposition \ref{drinfeld's proposition}.
\end{theorem}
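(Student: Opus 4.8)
The plan is to prove that $\eta_F$ is an isomorphism by producing an explicit quasi-inverse, the guiding idea being that the full $\Oo_B$-action on a point of $\cM$ is already determined by its restriction to $\Oo_F$ together with the canonical polarization of Proposition~\ref{drinfeld's proposition}. First I would check that $\eta_F$ is a well-defined morphism of functors, i.e. that $(X,\iota_B|_{\Oo_F},\lambda_X^0,\rho)$ really lies in $\cN(S)$; the only non-formal points are the Kottwitz condition and the Rosati compatibility. For the former, specialness gives a $\Z/2\Z$-grading $\mathrm{Lie}(X)=(\mathrm{Lie}X)_0\oplus(\mathrm{Lie}X)_1$ with each summand $\Oo_S$-free of rank one, and since $\pi\delta=-\delta\pi$ in $\Oo_B$ the operator $\iota(\pi)$ interchanges the two graded pieces; hence $\iota(\pi)^2=\iota(\pi_0)=\pi_0$ on $\mathrm{Lie}(X)$, its trace vanishes, and $\mathrm{char}(\iota(\pi)\mid\mathrm{Lie}X)=T^2-\pi_0$, which is exactly the Kottwitz condition. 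For the latter, the Rosati involution of $\lambda_X^0$ is $b\mapsto b^*=\pi b'\pi^{-1}$ by Proposition~\ref{drinfeld's proposition}; since the main involution restricts to Galois conjugation on the commutative subfield $F$ and $\pi\in F$ commutes, one gets $a^*=\bar a$ for $a\in\Oo_F$, as required. The height-$0$ and framing conditions on $\rho$ are immediate from the commuting diagram in Proposition~\ref{drinfeld's proposition}(ii).

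The heart of the proof is the inverse $\cN\to\cM$, which amounts to reconstructing the $\Oo_B$-action on a given $(X,\iota,\lambda,\rho)\in\cN(S)$. Because $\Oo_B=\Oo_F\oplus\Oo_F\delta$, extending $\iota\colon\Oo_F\to\End(X)$ to $\Oo_B$ is the same as producing a single endomorphism $J=\iota_B(\delta)\in\End(X)$ with $J^2=\Delta$ and $J\,\iota(a)=\iota(\bar a)\,J$ for all $a\in\Oo_F$; then $\rho$ is automatically $\Oo_B$-linear precisely when $J$ lifts the quasi-endomorphism $J_0:=\rho^{-1}\circ\iota_\bX(\delta)\circ\rho$ of $X\times_S\bar S$. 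Thus the whole problem reduces to showing that $J_0$ is an honest endomorphism of $X_{\bar S}$ and that it lifts (necessarily uniquely) to $X$ over all of $S$.

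For this I would use crystalline Dieudonn\'e theory. By rigidity of quasi-isogenies $J_0$ is a well-defined quasi-endomorphism of $X_{\bar S}$, and its integrality is read off from the explicit Dieudonn\'e lattice: one checks, using the formulas for $F,V,\iota_\bX(\pi),\iota_\bX(\delta)$ in Section~\ref{section of N_F}, that the constraints imposed by $\lambda$ and the Kottwitz condition force $M(X_{\bar S})\subset N(\bX)$ to be $\iota_\bX(\delta)$-stable. To lift $J_0$ along a nilpotent thickening $\bar S\hookrightarrow S$, Grothendieck--Messing theory identifies the obstruction with the failure of $J_0$, acting on the Dieudonn\'e crystal evaluated on $S$, to preserve the Hodge filtration $\omega_X$. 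I would show this obstruction vanishes by comparing local models: the Kottwitz condition defining $\cN$ and the specialness condition defining $\cM$ cut out the same closed subscheme of the relevant Grassmannian of filtrations, so the filtration attached to a point of $\cN$ is forced to be $\delta$-stable; equivalently, $\omega_X$ is determined by the $\iota(\pi)$-action, with which $\delta$ is compatible by construction. Finally, since $E/F_0$ is unramified, \eqref{OoEgrading} splits $\Oo_E\otimes_{\Oo_{F_0}}\Oo_S$ as a product, so once $J$ exists the induced $\Oo_E$-action decomposes $\mathrm{Lie}(X)$ into two $\Oo_S$-summands interchanged isomorphically by $\iota(\pi)$; as $\mathrm{Lie}(X)$ is $\Oo_S$-locally free of rank $2$, each summand is an $\Oo_S$-line bundle, which is exactly specialness. (The standing hypothesis $p\neq2$, inherited from Proposition~\ref{drinfeld's proposition}, guarantees the tame ramification of $F/F_0$ under which Drinfeld's polarization theory and these integral decompositions remain valid.)

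It then remains to verify that the two constructions are mutually inverse and functorial in $S$. Functoriality and the identities $J^2=\Delta$, $J\iota(a)=\iota(\bar a)J$ are formal, transported from the framing object through $\rho$; to see that the reconstructed $\Oo_B$-structure has Drinfeld polarization equal to the given $\lambda$ one invokes the uniqueness up to $\Oo_{F_0}^\times$ in Proposition~\ref{drinfeld's proposition}(i)--(ii) together with condition $(4)$ in the definition of $\cN$. The main obstacle is precisely the lifting step of the previous paragraph: controlling the existence and uniqueness of $J=\iota_B(\delta)$ over non-reduced bases $S$, i.e. establishing integrality of the transported $\delta$ and the vanishing of its Grothendieck--Messing lifting obstruction via the local-model comparison. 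Everything else is either formal or a direct computation with the explicit Dieudonn\'e data of Section~\ref{section of N_F}.
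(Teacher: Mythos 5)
First, a point of comparison: the paper does not prove this statement at all --- it is quoted verbatim from \cite[Theorem 1.2]{KR3}, so there is no in-paper argument to measure your proposal against. Your overall architecture (check well-definedness of $\eta_F$; reconstruct $\iota_B(\delta)$ as the unique lift of the quasi-endomorphism $\rho^{-1}\circ\iota_\bX(\delta)\circ\rho$ via rigidity of quasi-isogenies and Grothendieck--Messing; reduce everything to showing the Hodge filtration of a point of $\cN$ is automatically $\delta$-stable) is the standard and correct shape for such an argument, and your verification of the Kottwitz condition and of the Rosati compatibility in the forward direction is fine.

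The genuine gap is in the justification of the central step. You claim that ``the Kottwitz condition defining $\cN$ and the specialness condition defining $\cM$ cut out the same closed subscheme of the relevant Grassmannian of filtrations, so the filtration attached to a point of $\cN$ is forced to be $\delta$-stable,'' and ``equivalently, $\omega_X$ is determined by the $\iota(\pi)$-action.'' Both assertions are false as stated. In the notation of Section \ref{section of N_F}, take $\mathcal F=\mathrm{span}_{\bar\kay}\{e_0+e_1,\,f_0+f_1\}\subset M(\bX)\otimes\bar\kay$: it is $\pi$-stable, a direct summand of rank $2$, and $\iota(\pi)$ acts on the quotient with characteristic polynomial $T^2=T^2-\pi_0$, so it satisfies the Kottwitz condition; yet $\delta(e_0+e_1)=\delta(e_0-e_1)\notin\mathcal F$, so it is not $\Oo_B$-stable. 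What actually forces gradedness is the condition you left out of this step: the isotropy of the Hodge filtration with respect to the alternating form induced by the principal polarization $\lambda$. Since $M_+=\mathrm{span}\{e_0,f_1\}$ and $M_-=\mathrm{span}\{e_1,f_0\}$ are mutually orthogonal nondegenerate symplectic planes, an isotropic plane $\mathcal F$ meeting $M_\pm$ nontrivially is automatically graded, and a non-graded isotropic plane is the graph of an anti-isometry $\phi:M_+\to M_-$; imposing $\pi$-stability on such a graph yields $\langle\phi x,\phi y\rangle=+\langle x,y\rangle$, which contradicts the anti-isometry condition precisely because $2\delta\neq0$. So the hypothesis $p\neq2$ enters here (and in Proposition \ref{drinfeld's proposition}), not through ``tame ramification of $F/F_0$'' as you suggest. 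Nor is $\omega_X$ determined by the $\iota(\pi)$-action: the special fiber of the relevant local model is one-dimensional (two crossing projective lines), not a point.

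Two further places where you assert rather than prove: (i) integrality of $J_0$ over $\bar S$ --- checking $\iota_\bX(\delta)$-stability of the Dieudonn\'e lattices at geometric points is not by itself enough when $\bar S$ is non-reduced, and one needs the standard closedness/flatness argument for the locus where a quasi-isogeny is an isogeny; (ii) the passage from ``the Lagrangian local model with Kottwitz condition equals the special $\Oo_B$-stable local model'' over a field to the same statement over an arbitrary $R\in\mathrm{Nilp}_{\Oo_{\breve F_0}}$, which is exactly the scheme-theoretic identity of local models that the whole proof hinges on. These can all be repaired, but as written the decisive step rests on an incorrect reason.
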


\subsection{ Drinfeld upper half plane and its formal completion}

For convenience of the reader and to set up notation for the rest of the paper, we briefly review the well-known facts about the Drinfeld upper half place ${\Omega}$,  its formal completion $\widehat{\Omega}$, and its base change  $\breve{\Omega}$ to $W=\Oo_{\breve F_0}$, following \cite{BC}.

Recall that the Bruhat-Tits tree $\mathcal B(\hbox{PGL}_2(F_0))$ consists of vertices and edges. The vertices are given by the homothety classes $[\Lambda]$ of $\Oo_{F_0}$-lattices in $F_0^2$, and the edges are given by pairs $([\Lambda], [\Lambda'])$ of the homothety classes such that $\pi_0 \Lambda' \subset \Lambda \subset \Lambda'$ for suitable choices of lattices $\Lambda$ and $\Lambda'$ in their homothety classes. We then say $\Lambda$ and $\Lambda'$ are adjacent. We use $\mathbb P_{[\Lambda]}$ to denote the projective line over $\Oo_{F_0}$ associated to $\Lambda$ depending on its homothety class.
Let
$$
\Omega_{[\Lambda]} = \mathbb P_{[\Lambda]} - \mathbb P_{[\Lambda]}(\kay)
$$
be the  projective line with the $q+1$ rational $\kay$-points removed. When  $([\Lambda], [\Lambda'])$ is an edge, $\Lambda \pmod{\pi_0}$ gives a $\kay$-rational point in $\mathbb P_{[\Lambda']}$. We write $\bP _{[\Lambda,\Lambda']}$ for the blow up of $\bP_{[\Lambda']}$ at this point, which is isomorphic to the blow up of $\bP_{[\Lambda]}$ at the rational point determined by $\Lambda'$. We write $\Omega_{[\Lambda,\Lambda']}$ for the complement of the nonsingular rational points of the special fiber of $\bP _{[\Lambda,\Lambda']}$. There is an open embedding $\Omega_{[\Lambda]} \hookrightarrow \Omega_{[\Lambda,\Lambda']}$.

Define $\widehat{\Omega}_{[\Lambda]}$ and $\widehat{\Omega}_{[\Lambda,\Lambda']}$ to be the formal completion of $\Omega_{[\Lambda]}$ and $ \Omega_{[\Lambda,\Lambda']}$ along their special fibers respectively. For two different edges with a common vertex $[\Lambda]$, we can glue  $\widehat{\Omega}_{[\Lambda, \Lambda']}$ and  $\widehat\Omega_{[\Lambda, \Lambda'']}$  along $\widehat\Omega_{[\Lambda]}$. This gives the formal scheme $\widehat{\Omega}$ by glueing over $\mathcal B(\hbox{PGL}_2(F_0))$. The generic fiber of $\widehat{\Omega}$ is the Drinfeld $p$-adic half space $\Omega=\bP^1(\mathbb{C}_p)- \bP^1 (F_0)$, where $\mathbb{C}_p$ is the completion of $\bar{F}_0$.
Define $\breve{\Omega}=\widehat{\Omega}\times_{\Spf \Oo_{F_0}}\Spf \Oo_{\breve{F}_0}$. Similarly define $\breve{\Omega}_{[\Lambda]}$ and $\breve{\Omega}_{[\Lambda,\Lambda']}$. Drinfeld's well-known result
(see  \cite{D} and  \cite{BC})  asserts that $ \mathcal{M}$ is represented by $\breve{\Omega}$. The following proposition will be used in Section 4.

\begin{proposition}(Deligne functor) (\cite[Propositions 4.2, 4.4]{BC}) \label{Deligne functor rep at superspecial point}
\begin{enumerate}
\item
$\widehat{\Omega}_\Lambda$ represents the functor that associates an $\Oo_{F_0}$-algebra $R\in \mathrm{Nilp}_{\Oo_{F_0}}$ the collection of isomorphism classes of pairs $(\mathcal{L},\alpha)$, where $\mathcal{L}$ is a free $R$-module of rank 1 and $\alpha:\Lambda \rightarrow \mathcal{L}$ is a homomorphism of $\Oo_{F_0}$-modules satisfying the condition:
	\begin{itemize}
		\item for all $x\in \Spec(R/\pi_0R)$, the map $\alpha(x):\Lambda /\pi_0\Lambda  \rightarrow \mathcal{L}\otimes_R k(x)$ is injective.
	\end{itemize}

 \item
	 $\widehat{\Omega}_{[\Lambda ,\Lambda ']}$ represents the functor that associates an $\Oo_{F_0}$-algebra  $R\in \mathrm{Nilp}_{\Oo_{F_0}}$ the collection of isomorphism classes of commutative diagrams:
	 \[ \begin{tikzcd}
	 \pi_0 \Lambda'  \arrow[hookrightarrow]{r}{} \arrow{d}{\alpha'/\pi_0} & \Lambda  \arrow{d}{\alpha} \arrow[hookrightarrow]{r}{}& \Lambda' \arrow{d}{\alpha'}  \\
	 \mathcal{L}' \arrow{r}{c'}& \mathcal{L}\arrow{r}{c}&\mathcal{L}'
	 \end{tikzcd}
	 \]
	 where $\mathcal{L}$ and $\mathcal{L}'$ are free R-modules of rank 1, $\alpha$ and $\alpha'$ are the homomorphisms of $\mathcal{O}_{F_0}$-modules, $c$ and $c'$ are homomorphisms of $R$-modules, satisfying the conditions:
	 for all $x\in \Spec (R/ \pi_0 R)$, one has
	 \begin{itemize}
	 	\item ker$(\alpha'(x)$: $\Lambda' /\pi_0 \Lambda ' \rightarrow \mathcal{L}'\otimes_{R} k(x))\subset \Lambda /\pi_0 \Lambda' ,$
	 	\item ker$(\alpha(x)$: $\Lambda /\pi_0 \Lambda  \rightarrow \mathcal{L}\otimes_{R} k(x))\subset \pi_0\Lambda' /\pi_0 \Lambda. $
	 \end{itemize}
\end{enumerate}

\end{proposition}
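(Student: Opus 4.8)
The plan is to prove both parts by exhibiting $\widehat{\Omega}_\Lambda$ and $\widehat{\Omega}_{[\Lambda,\Lambda']}$ as formal completions of explicit projective schemes and then computing their functors of points directly, following Deligne's original method. First I would record the Grothendieck description of the ambient projective line: $\mathbb{P}_{[\Lambda]}=\mathbb{P}(\Lambda)$ represents the functor sending an $\Oo_{F_0}$-algebra $R$ to the set of isomorphism classes of pairs $(\mathcal{L},\alpha)$ with $\mathcal{L}$ an invertible $R$-module and $\alpha:\Lambda\otimes_{\Oo_{F_0}}R\to\mathcal{L}$ a surjection, two pairs being identified under an isomorphism of $\mathcal{L}$. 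Since $\Lambda\cong\Oo_{F_0}^2$, this is just $\mathbb{P}^1_{\Oo_{F_0}}$, and the pair $(\mathcal{L},\alpha)$ is exactly the data appearing in part (1), minus the special-fibre condition.

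Next I would identify the open subscheme $\Omega_{[\Lambda]}=\mathbb{P}_{[\Lambda]}-\mathbb{P}_{[\Lambda]}(\kay)$ at the level of functors. The $q+1$ removed points are the $\kay$-rational points of the special fibre $\mathbb{P}(\Lambda/\pi_0\Lambda)$, each corresponding to a $\kay$-rational line $\ell\subset\Lambda/\pi_0\Lambda$ contained in $\ker\bar\alpha$. Thus an $R$-point $(\mathcal{L},\alpha)$ factors through $\Omega_{[\Lambda]}$ precisely when, for every $x\in\Spec(R/\pi_0 R)$, the reduction $\alpha(x):\Lambda/\pi_0\Lambda\to\mathcal{L}\otimes_R k(x)$ kills no $\kay$-rational line, which is the injectivity condition in the statement. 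To pass to $\widehat{\Omega}_\Lambda$, I would restrict this functor to $\mathrm{Nilp}_{\Oo_{\breve{F}_0}}$ after base change to $W$; because $\pi_0$ is nilpotent in such $R$, the schemes $\Spec(R)$ and $\Spec(R/\pi_0 R)$ have the same underlying space, so testing the condition over the special fibre suffices and the represented functor is exactly the one in (1).

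For part (2), the key geometric input is already recorded in the text: $\mathbb{P}_{[\Lambda,\Lambda']}$ is the blow-up of $\mathbb{P}_{[\Lambda']}$ at the $\kay$-point cut out by $\Lambda$ (equivalently of $\mathbb{P}_{[\Lambda]}$ at the point cut out by $\Lambda'$). I would write down the two standard charts of this blow-up and match the universal data with the commutative diagram $\pi_0\Lambda'\hookrightarrow\Lambda\hookrightarrow\Lambda'$ together with the line bundles $\mathcal{L},\mathcal{L}'$ and the contractions $c,c'$: the pair $(\mathcal{L},\alpha)$ records the chart adapted to $\Lambda$, the pair $(\mathcal{L}',\alpha')$ the chart adapted to $\Lambda'$, and $c,c'$ (whose composites are multiplication by $\pi_0$) encode the gluing across the exceptional divisor. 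The two kernel conditions on the special fibre, $\ker\alpha'(x)\subset\Lambda/\pi_0\Lambda'$ and $\ker\alpha(x)\subset\pi_0\Lambda'/\pi_0\Lambda$, are exactly what remove the non-singular $\kay$-rational points of the two components, leaving $\Omega_{[\Lambda,\Lambda']}$; completing and base-changing as before then yields $\widehat{\Omega}_{[\Lambda,\Lambda']}$.

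The main obstacle I anticipate is the verification in part (2) that the diagrammatic data in the statement represents precisely the blow-up and not some other modification. Concretely one must check, chart by chart, that giving $(\mathcal{L},\mathcal{L}',\alpha,\alpha',c,c')$ subject to the stated kernel conditions is equivalent to giving an $R$-valued point of the blow-up landing in $\Omega_{[\Lambda,\Lambda']}$, including the correct behaviour along the exceptional divisor, where both $\alpha$ and $\alpha'$ degenerate. This is a careful local computation in the two affine charts and along their overlap; once this identification is in place, the reduction to the formal completion is formal, exactly as in part (1).
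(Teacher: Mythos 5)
Your proposal is correct and follows essentially the same route as the source the paper cites (the paper itself simply quotes \cite[Propositions 4.2, 4.4]{BC} without reproducing the argument): identify the pair $(\mathcal{L},\alpha)$ with a point of $\mathbb{P}(\Lambda)$ via Grothendieck's description, observe that injectivity of $\alpha(x)$ on the two-dimensional $\kay$-vector space $\Lambda/\pi_0\Lambda$ exactly excludes the $\kay$-rational lines, and for part (2) match the data $(\mathcal{L},\mathcal{L}',\alpha,\alpha',c,c')$ with the two charts of the blow-up, where in coordinates $t_0=\alpha(\pi_0 e_2)$, $t_1=\alpha'(e_1)$ one recovers the relation $t_0t_1=\pi_0$ and the kernel conditions delete the nonsingular rational points of the special fibre. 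The chart-by-chart verification you flag as the main obstacle is indeed the only substantive computation, and your reduction to the formal completion via the fact that $\pi_0$ is nilpotent in $R$ is the correct final step.
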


We now describe the explicit equations of $\widehat{\Omega}_{[\Lambda]}$ and $\widehat{\Omega}_{[\Lambda,\Lambda']}$.
Let $\{e_1,e_2\}$ be a basis of $\Lambda$. Then we have
\begin{equation} \label{eq:vertex}
\widehat{\Omega}_{[\Lambda]}=(\bP_{[\Lambda]}- \bP_{[\Lambda]}(k))^{\wedge}=\Spf \Oo_{F_0}[T,(T^{\q}-T)^{-1}]^{\wedge},
\end{equation}
where $T=\frac{X_0}{X_1}$ and $X_i$ is the coordinate of $\bP_{[\Lambda]}$ with respect to the basis $\{e_1,e_2\}$ and $``$hat$"$ indicates completion along the special fiber.

Without loss of generality, we can assume $\Lambda '$ has a basis $\{e_1,\pi_0^{-1}e_2\}$. Then we have
\begin{equation} \label{eq:edge}
\widehat{\Omega}_{[\Lambda,\Lambda']}=\Spf \Oo_{F_0}[T_0,T_1,(T_0^{\q-1}-1)^{-1},(T_1^{\q-1}-1)^{-1}]^{\wedge}/(T_0T_1-\pi_0).
\end{equation}
In this case, the open immersions $\widehat{\Omega}_{[\Lambda]}\hookrightarrow \widehat{\Omega}_{[\Lambda,\Lambda']} $ and $\widehat{\Omega}_{[\Lambda']}\hookrightarrow \widehat{\Omega}_{[\Lambda,\Lambda']} $ are induced by
\begin{equation}  \label{eq:embedding}
T_0\mapsto  T,\ T_1\mapsto \pi_{0} \cdot T^{-1},
\end{equation}
and
\begin{equation}  \label{eq:embeddingII}
	T_0\mapsto  \pi_{0} \cdot T^{-1},\ T_1\mapsto  T.
\end{equation}

\section{Special fiber of special cycle} \label{sect:specialfiber}
In this section, we study the support of the special cycles. Essentially we only need the Pappas model $\cN_{(1,1)}$ (see Proposition \ref{prop Phi}).
\subsection{Special fiber of $\cN_{(1,1)}$.} Since $\Oo_F/(\pi) =\Oo_{F_0}/(\pi_0) =\kay$, we have $\cN_{(1, 1)}/\bar\kay \cong \cN/\bar\kay$. We briefly review the structure of $\cN_{(1, 1)}/\bar{\kay}$ following \cite{KR3}.

Recall that $C=N^{\tau}, \tau=\pi V^{-1}$. For an $\Oo_F$-lattice $\Lambda_F$ in $C$, set
\[\Lambda_{F}^{\sharp}=\{x\in C\,\mid (x,\Lambda_F)\, \subset \Oo_F\}.\]
Similarly, for a lattice $\Oo_{\breve{F}}$-lattice $M\subset N$, set
\[M^{\sharp}=\{x\in N\,\mid (x,M)\, \subset \Oo_{\breve{F}}\}.\]
\begin{definition}\label{def of vertex O_F lattice}
	For an $\Oo_F$-lattice $\Lambda$ in $C$, $\Lambda$ is called a vertex lattice of type $t$ if $\pi\Lambda \subset \Lambda^{\sharp} \stackrel{t}{\subset} \Lambda$, meaning $[\Lambda : \Lambda^{\sharp}]=t$. In our case $t=0$ or $2$ by Lemma 3.2 of \cite{RTW}.
\end{definition}
\begin{remark}
In the rest of the paper we will often use the subscript $0$ or $2$ to indicate the type of a vertex $\Oo_F$-lattice. For example $\Lambda_0$ often stands for a vertex lattice of type $0$.
\end{remark}

Let $\mathcal B= \mathcal{B} (\mathrm{PU}(C))$ be the Bruhat-Tits tree of $\hbox{PU}(C)$---the projective unitary group of $C$. Its vertices correspond to vertex $\Oo_F$-lattices $\Lambda_2$ of type 2, and  its edges correspond to vertex $\Oo_F$-lattices $\Lambda_0$ of type $0$.  Two vertices  $\Lambda_2$ and $\Lambda_2'$  are connected by an edge $\Lambda_0$, or adjacent, if and only if $\Lambda_2\cap\Lambda_2'=\Lambda_0$. The  following lemma is easy to check (recall that $C$ is isotropic) and is left to the reader.

\begin{lemma} \label{lem3.2}
Let $\Lambda_0$ be a vertex $\Oo_F$-lattice of type $0$. There is an $\Oo_F$-basis $\{w_0, w_1\}$ of $\Lambda_0$ with Gram matrix $\kzxz {0} {1} {1} {0}$. There are exactly two vertex $\Oo_F$-lattices of type $2$ containing $\Lambda_0$:
$$
\Lambda_2 =\Oo_F \pi^{-1} w_0 + \Oo_F w_1, \quad \hbox{and}\quad \Lambda_2'= \Oo_F w_0 + \Oo_F \pi^{-1} w_1.
$$
There are $\q+1$ adjacent (type $2$) vertices of $\Lambda_2$ in $\mathcal B$ , and they are
$$
\Lambda_{\infty} =\Oo_F  \pi^{-2} w_0 + \Oo_F \pi w_1,\quad \Lambda_{k}=\Oo_F w_0+\Oo_F \pi^{-1}(k\pi^{-1}w_0+w_1)
$$
 where $k$ runs through the representatives of $\Oo_{F_0}/ (\pi_0)$.
\end{lemma}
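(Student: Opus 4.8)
The plan is to establish the three assertions in turn, all by explicit lattice computations that exploit the ramified involution $\bar\pi=-\pi$ and the fact that conjugation is trivial on the residue field $\kay=\Oo_F/\pi$. Throughout, $(\,,\,)$ denotes the Hermitian form on $C$, and I use repeatedly that $\Lambda_0$ is self-dual, i.e.\ $\Lambda_0^{\sharp}=\Lambda_0$.

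For the hyperbolic basis, since $C$ is isotropic I first pick a nonzero isotropic vector and rescale it by a power of $\pi$ to a primitive $w_0\in\Lambda_0\setminus\pi\Lambda_0$ (rescaling preserves isotropy). Because the form on $\Lambda_0$ is unimodular, the functional $(\,\cdot\,,w_0)$ generates the $\Oo_F$-dual, so there is $w_1'\in\Lambda_0$ with $(w_0,w_1')=1$; then $\{w_0,w_1'\}$ is an $\Oo_F$-basis of $\Lambda_0$ (its Gram determinant is a unit, so the unimodular sublattice it spans must equal $\Lambda_0$). Finally I set $w_1=w_1'-\tfrac12(w_1',w_1')w_0$: since $p\neq 2$ and $(w_1',w_1')\in\Oo_{F_0}$ this lies in $\Lambda_0$, kills the diagonal entry, and preserves $(w_0,w_1)=1$, yielding the Gram matrix $\kzxz{0}{1}{1}{0}$.

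For the two type-$2$ overlattices, a direct Gram computation shows that $\Lambda_2=\Oo_F\pi^{-1}w_0+\Oo_Fw_1$ and $\Lambda_2'=\Oo_Fw_0+\Oo_F\pi^{-1}w_1$ both contain $\Lambda_0$ and satisfy $\Lambda^{\sharp}=\pi\Lambda$, hence are of type $2$. For the converse, let $\Lambda\supset\Lambda_0$ be any type-$2$ lattice. Dualizing and using $\Lambda_0^{\sharp}=\Lambda_0$ together with $\Lambda^{\sharp}=\pi\Lambda$ gives $\pi\Lambda\subset\Lambda_0\subset\Lambda$, and self-duality of $\Lambda_0$ forces $[\Lambda:\Lambda_0]=[\Lambda_0:\pi\Lambda]$, so each index is $\q$ and $\Lambda=\Lambda_0+\Oo_F\pi^{-1}v$ for a primitive $v$ representing a line in $\Lambda_0/\pi\Lambda_0$. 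Computing $\Lambda^{\sharp}=\{x\in\Lambda_0:(x,v)\in\pi\Oo_F\}$ and noting $[\Lambda:\Lambda^{\sharp}]=\q^2=[\Lambda:\pi\Lambda]$, one finds $\Lambda^{\sharp}=\pi\Lambda$ iff $\pi\Lambda\subseteq\Lambda^{\sharp}$ iff $(v,v)\in\pi_0\Oo_{F_0}$. Writing $v=aw_0+bw_1$ gives $(v,v)=\tr_{F/F_0}(a\bar b)$, whose reduction mod $\pi$ is $2\,\bar a\,\bar b\in\kay$; since $p\neq 2$ this vanishes only for the lines $[w_0]$ and $[w_1]$, recovering $\Lambda_2$ and $\Lambda_2'$. (One checks the condition is well defined on lines: replacing $v$ by $v+\pi w$ changes $(v,v)$ by an element of $\pi_0\Oo_{F_0}$.)

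For the neighbors, the bipartite tree structure of $\mathcal B$ identifies the vertices adjacent to $\Lambda_2$ with the type-$0$ sublattices $\Lambda_0'\subset\Lambda_2$ (the edges at $\Lambda_2$): by the previous step each such edge lies in exactly two type-$2$ vertices, one being $\Lambda_2$ and the other its neighbor, and distinct edges give distinct neighbors since $\mathcal B$ is a tree. Dualizing as before shows the type-$0$ sublattices of $\Lambda_2$ are exactly the index-$\q$ lattices between $\pi\Lambda_2$ and $\Lambda_2$, and here the self-duality is automatic, so all $\q+1$ lines in $\Lambda_2/\pi\Lambda_2$ occur; thus $\Lambda_2$ has $\q+1$ neighbors. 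It then remains to verify that $\Lambda_\infty$ and each $\Lambda_k$ are of type $2$, that $\Lambda_2\cap\Lambda_\infty$ and $\Lambda_2\cap\Lambda_k$ are of type $0$, and that the corresponding lines $[1:0]$ and $[k:1]$ in $\Lambda_2/\pi\Lambda_2$ are pairwise distinct as $k$ runs over $\Oo_{F_0}/(\pi_0)\cong\kay$; counting $1+\q=\q+1$ distinct neighbors shows these are all of them. The computations are routine, and the one genuinely delicate point is the type-$2$ characterization above: one must track that the relevant reduced form is the \emph{symmetric} form $2ab$ over $\kay$ (conjugation being trivial mod $\pi$), and that although the integrality condition is modulo $\pi_0=\pi^2$ it is already decided modulo $\pi$, so that exactly two of the $\q+1$ lines survive.
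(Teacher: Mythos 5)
The paper does not actually prove this lemma: it is introduced with ``easy to check (recall that $C$ is isotropic) and is left to the reader,'' so there is no argument to compare yours against. Your proof is correct and is the natural one. In particular you handle the one genuinely delicate point properly: for $\Lambda=\Lambda_0+\Oo_F\pi^{-1}v$ the type-$2$ condition is $(v,v)\in\pi_0\Oo_{F_0}$, which is already decided modulo $\pi$ and there becomes the vanishing of the symmetric form $2\bar a\bar b$ on the line $[v]\subset\Lambda_0/\pi\Lambda_0$, so $p\neq 2$ gives exactly the two isotropic lines $[w_0]$ and $[w_1]$. The only place you lean on an external fact is invoking that $\mathcal B$ is a tree to get distinctness of the neighbors attached to distinct type-$0$ sublattices of $\Lambda_2$; this can be avoided, since two distinct type-$0$ sublattices (each of index $\q$ in $\Lambda_2$) generate $\Lambda_2$, so a second type-$2$ vertex containing both would contain $\Lambda_2$ and hence, by duality, equal it. The remaining verifications you defer (that $\Lambda_\infty$, $\Lambda_k$ are type $2$, that the intersections with $\Lambda_2$ are type $0$, and that the lines $[1:0]$, $[k:1]$ are pairwise distinct) are indeed routine and come out as claimed.
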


Recall the following results.
\begin{proposition} (\cite[Proposition 2.2]{RTW} and \cite[Lemma 3.2]{KR3}) \label{k bar points of N}  Let
$\mathcal{L}(N) $ be the set of $\Oo_{\breve F}$-lattices
\[
\{M\subset N\,\mid  \pi_0 M \subset VM \stackrel{2}{\subset} M, \quad M^{\sharp}=M, \quad  \dim_{\bar\kay} VM/(VM \cap \pi M) \le 1, \}.
\]
Then
the map
$$
\cN_{(1, 1)}(\bar\kay) \rightarrow \mathcal L(N), \quad x=(X, \iota, \lambda, \rho) \mapsto M(x):=\rho (M(X))\subset N
$$
is a bijection. Moreover, for $M =M(x) \in  \mathcal L(N)$, we have
	\begin{enumerate}[label=(\roman*)]
\item If $M$ is $\tau$-stable, then $M$ is of the form $M=\Lambda_0\otimes_{\Oo_{F}}\Oo_{\breve{F}}$ for some vertex $\Oo_F$-lattice $\Lambda_0$ of type $0$ in $C$,
\item If $M$ is not $\tau$-stable, then
		\[M+\tau M=\Lambda_2\otimes_{\Oo_{F}}\Oo_{\breve{F}}\]
	for some vertex $\Oo_F$-lattice $\Lambda_2$  of type 2 in $C$.
	\end{enumerate}
\end{proposition}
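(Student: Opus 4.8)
The plan is to run the standard Dieudonn\'e-theoretic dictionary and then extract the vertex-lattice description from the operator $\tau=\pi V^{-1}$, following \cite{RTW} and \cite{KR3}. First I would set up the bijection. By Dieudonn\'e theory over the perfect field $\bar\kay$, an object $x=(X,\iota,\lambda,\rho)\in\cN_{(1,1)}(\bar\kay)$ is determined by its Dieudonn\'e module $M(X)$, and the framing $\rho$ of height $0$ identifies $M(X)\otimes\Q$ with $N=M(\bX)\otimes\Q$ and realizes $M:=\rho(M(X))$ as an $\Oo_{\breve{F}}$-lattice in $N$ of the same volume as $M(\bX)$. I would then translate each datum into a lattice condition: the dimension $2$, height $4$ condition says $M$ is free of rank $4$ over $W$ with $VM\stackrel{2}{\subset}M$; since $\pi_0=VF$ and $FM\subseteq M$ we get $\pi_0 M=VFM\subseteq VM$; the $\Oo_F$-action makes $M$ stable under $\pi$; the principal polarization with the Rosati condition gives self-duality $M^\sharp=M$ for the form $(\,,\,)$ on $N$; and the Kottwitz condition $\mathrm{char}(\iota(\pi)\mid\mathrm{Lie}\,X)=T^2-\pi_0$, which over $\bar\kay$ reduces to $T^2$, forces $\iota(\pi)$ to act nilpotently on the $2$-dimensional space $\mathrm{Lie}(X)=M/VM$, i.e. $\dim_{\bar\kay}\pi\,\mathrm{Lie}(X)\le 1$. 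This last inequality is exactly $\dim_{\bar\kay}VM/(VM\cap\pi M)\le 1$. Conversely any such lattice produces an object, yielding the bijection $\cN_{(1,1)}(\bar\kay)\xrightarrow{\sim}\mathcal{L}(N)$.

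For (i) and (ii) I would reformulate the defining inequality through $\tau$. Since $\pi$ commutes with $V$ and $V=\pi\tau^{-1}$, one has $VM=\pi\,\tau^{-1}M$; dividing by $\pi$ and applying the bijection $\tau$ identifies $VM/(VM\cap\pi M)\cong M/(M\cap\tau M)$, so the condition reads $\dim_{\bar\kay}M/(M\cap\tau M)\le 1$, a measure of the failure of $\tau$-stability. If this dimension is $0$ then $M\subseteq\tau M$; as $M$ and $\tau M$ are both self-dual (the form relations give $(\tau M)^\sharp=\tau(M^\sharp)=\tau M$) they have equal volume, forcing $M=\tau M$. A $\tau$-stable lattice descends by \'etale descent along $\breve{F}/F$ (using $C=N^\tau$, $C\otimes_F\breve{F}=N$) to $\Lambda_0=M\cap C$ with $M=\Lambda_0\otimes_{\Oo_F}\Oo_{\breve{F}}$, and $M^\sharp=M$ descends to $\Lambda_0^\sharp=\Lambda_0$, so $\Lambda_0$ is a vertex lattice of type $0$; this gives (i). Conversely every type-$0$ lattice yields such an $M$ (here $VM=\pi M$, so the inequality holds with value $0$), which I would record for surjectivity.

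In the remaining case $\dim_{\bar\kay}M/(M\cap\tau M)=1$, set $M_+=M+\tau M$ and $M_-=M\cap\tau M$. Taking $\sharp$ interchanges sum and intersection and commutes with $\tau$, so $M_+^\sharp=M_-$; equal volumes of the self-dual lattices $M$ and $\tau M$ give $[M:M_-]=[\tau M:M_-]=q$, hence $M_+/M_-$ has length $2$. Granting that $M_+$ is $\tau$-stable, it descends to an $\Oo_F$-lattice $\Lambda_2=M_+\cap C$ with $\Lambda_2^\sharp=M_-\cap C$, and the length-$2$ computation descends to $[\Lambda_2:\Lambda_2^\sharp]=2$, so $\Lambda_2$ is a vertex lattice of type $2$ and $M+\tau M=\Lambda_2\otimes_{\Oo_F}\Oo_{\breve{F}}$, proving (ii). The main obstacle is precisely the $\tau$-stability of $M+\tau M$: one must show $\tau^2 M\subseteq M+\tau M$, and this is where the bound $\le 1$ enters decisively, preventing a strictly increasing chain $M\subsetneq M+\tau M\subsetneq M+\tau M+\tau^2 M\subsetneq\cdots$ that would violate the self-dual volume constraint. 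I expect everything else to be routine bookkeeping with volumes, duality, and descent, so I would concentrate the technical effort on this single step.
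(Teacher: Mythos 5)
You should first be aware that the paper does not prove this proposition: it is recalled with citations to \cite[Proposition 2.2]{RTW} and \cite[Lemma 3.2]{KR3}, so there is no internal argument to compare against, and your proposal is an attempt to reconstruct the proofs of the cited references. Most of the bookkeeping you describe is correct and standard: the Dieudonn\'e dictionary, the identification $VM/(VM\cap\pi M)\cong M/(M\cap\tau M)$ via $VM=\pi\tau^{-1}M$, the compatibility $(\tau M)^{\sharp}=\tau(M^{\sharp})$ which forces $M=\tau M$ in the index-zero case, descent along $C\otimes_F\breve F=N$, and the count $[M+\tau M:M\cap\tau M]=2$ in the remaining case. (One small remark: the condition $\dim_{\bar\kay}VM/(VM\cap\pi M)\le 1$ is automatic from $\pi_0M\subseteq VM$, since $\pi$ then squares to zero on the two-dimensional space $M/VM$ and hence has rank at most one, so nothing needs to be extracted from the Kottwitz condition at this point.)

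The genuine gap is exactly the step you defer: the $\tau$-stability of $M+\tau M$. The heuristic you offer --- that a strictly increasing chain $M\subsetneq M+\tau M\subsetneq M+\tau M+\tau^2M\subsetneq\cdots$ ``would violate the self-dual volume constraint'' --- is not an argument. The lattices $T_k=\sum_{i\le k}\tau^iM$ are not self-dual, a strictly increasing chain of lattices in $N$ is not in itself contradictory, and volume alone says nothing (each step merely raises the volume by one). What actually constrains the situation is the integrality coming from the polarization: from $\pi_0M\subseteq VM$ one gets $\pi\tau M=\pi_0V^{-1}M\subseteq M$, and from $VM\subseteq M$ one gets $\pi M\subseteq\tau M$, whence $\pi(M+\tau M)\subseteq M\cap\tau M=(M+\tau M)^{\sharp}$; since $[T_1:\pi T_1]=2=[T_1:T_1^{\sharp}]$ this forces $(M+\tau M)^{\sharp}=\pi(M+\tau M)$. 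Note that this same computation is also needed, and missing from your write-up, to verify $\pi\Lambda_2\subseteq\Lambda_2^{\sharp}$, which is part of the definition of a vertex lattice of type $2$. Even granting it, the inclusion $\tau^2M\subseteq M+\tau M$ does not follow by simply iterating: the analogous bound $\pi\tau^2M\subseteq M$ would require $\pi^3M\subseteq V^2M$, which is not among the axioms. Closing this step requires the finer lattice-theoretic argument of the cited references (exploiting that $M$, $\tau M$, $\tau^2M$ are pairwise adjacent self-dual lattices in the two-dimensional Hermitian space $N$ with $L\cap L'=\pi(L+L')$), and as written your proposal does not establish the proposition; either cite the stability statement as the paper does, or supply that argument explicitly.
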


\begin{proposition}\label{proposition about reduced locus of N_F}  As in the introduction,  we use $\bP_{\bar{\Lambda}_2}$ to denote the projective line over $\bar{\kay}$ associated to  $\Lambda_2 \otimes_{\Oo_F} \bar\kay$.

(1)  For every vertex  $\Oo_F$-lattice $\Lambda_2$ of type $2$, there is a closed immersion over $\bar\kay$
$$
i_{\bar{\Lambda}_2}:\mathbb P_{\bar{\Lambda}_2} \rightarrow \cN_{(1, 1)}/\bar\kay ,
$$
 which is given on $\bar\kay$-points as follows: it sends a line $l \subset \Lambda_2 \otimes_{\Oo_F} \bar\kay$ to  its preimage   under the projection  $\Lambda_2 \otimes {\mathcal{O}}_{\breve{F}} \rightarrow(\Lambda_2/\pi \Lambda_2) \otimes_{\Oo_F} \bar\kay$. Moreover,  $i_{\Lambda_2}(l)$ is $\tau$-invariant if and only if $l$ is $\kay$-rational, i.e., $l$ is the extension of a line in $\Lambda_2/\pi \Lambda_2$.
	
(2)  Let $\mathbb P_{\bar{\Lambda}_2}$ denote also its image in $\cN_{(1, 1)}/\bar\kay$ under     $i_{\Lambda_2}$. Then
$$
\cN_{(1, 1)}/\bar\kay = \bigcup_{ \Lambda_2 } \mathbb P_{\bar{\Lambda}_2},
$$
where the union is over all vertex lattices of type $2$.  Two such lines $\mathbb P_{\bar{\Lambda}_2} $ and $ \mathbb P_{\bar{\Lambda}_2'}$  intersect if and only if $\Lambda_2$ and $\Lambda_2'$ are connected by an edge, i.e.,  $\Lambda_2 \cap \Lambda_2' =\Lambda_0$ is a vertex $\Oo_F$-lattice of type $0$.  In such a case,   the two projective lines intersect at exactly one $\kay$-rational point, denoted by $pt_{\Lambda_0}$. Such a point is called   a  superspecial point in $\cN_{(1, 1)}$.

(3) The singular locus $\mathrm{Sing}$ of $\mathcal N_{(1, 1)}$ consists of all the superspecial   $pt_{\Lambda_0}$ with  $\Lambda_0$ running through all   vertex $\Oo_F$-lattices of  type  $0$.
\end{proposition}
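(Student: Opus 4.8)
\emph{Plan.} The whole argument will be carried out on the lattice side, through the bijection $\cN_{(1,1)}(\bar\kay)\xrightarrow{\sim}\mathcal L(N)$, $x\mapsto M(x)=\rho(M(X))$, of Proposition \ref{k bar points of N}, converting the combinatorics of vertex $\Oo_F$-lattices recorded in Lemma \ref{lem3.2} into the incidence geometry of the lines $\mathbb P_{\bar\Lambda_2}$. Throughout I write $M_2=\Lambda_2\otimes_{\Oo_F}\Oo_{\breve{F}}$ for a type-$2$ vertex lattice $\Lambda_2$, so that $M_2/\pi M_2=\bar\Lambda_2$ is a two-dimensional $\bar\kay$-space.

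For part (1) I would first observe that the $\Oo_{\breve{F}}$-lattices $M$ with $\pi M_2\subset M\subset M_2$ and $[M_2:M]=1$ are exactly the preimages under $M_2\twoheadrightarrow\bar\Lambda_2$ of the lines $l\subset\bar\Lambda_2$, so they form $\mathbb P_{\bar\Lambda_2}(\bar\kay)$. The key point is that each such $M$ lies in $\mathcal L(N)$, and self-duality is the heart of it. Since $\Lambda_2$ has type $2$ one computes $\Lambda_2^\sharp=\pi\Lambda_2$ exactly, so the skew-Hermitian form $\pi(\cdot,\cdot)$ reduces mod $\pi$ to a \emph{nondegenerate alternating} pairing on $\bar\Lambda_2$ (conjugation is trivial on $\kay$ as $F/F_0$ is ramified, and $p>2$). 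For such a pairing every line is Lagrangian, $l^\perp=l$; since $M\subset M_2=(\pi M_2)^\sharp$ gives $\pi M_2=M_2^\sharp\subset M^\sharp\subset M_2$ with $M^\sharp/\pi M_2=l^\perp$, one gets $M^\sharp=M$ for \emph{every} line $l$. The remaining defining conditions $\pi_0M\subset VM\stackrel{2}{\subset}M$ and $\dim_{\bar\kay}VM/(VM\cap\pi M)\le1$ I would verify directly in the explicit basis of Lemma \ref{lem3.2}. That $i_{\bar\Lambda_2}$ is a scheme-theoretic closed immersion, and not merely a bijection on $\bar\kay$-points, I would extract from the isomorphism $\cN\cong\breve\Omega$ of Theorem \ref{alternative description F}: under \eqref{eq:vertex} the piece $\widehat\Omega_{[\Lambda]}$ has special fiber $\mathbb P^1-\mathbb P^1(\kay)$, and $\mathbb P_{\bar\Lambda_2}$ is the corresponding irreducible component. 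Finally $\tau=\pi V^{-1}$ acts on $M_2$ as $\sigma$ on the factor $\Oo_{\breve{F}}$ while fixing $\Lambda_2\subset C=N^\tau$, hence on $\bar\Lambda_2$ as $\mathrm{id}\otimes\mathrm{Frob}$; thus $M$, equivalently $l$, is $\tau$-stable iff $l$ is Frobenius-stable, i.e.\ $\kay$-rational.

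For the covering and intersection statements of (2) I would run the dichotomy of Proposition \ref{k bar points of N} in reverse. Given $M\in\mathcal L(N)$: if $M$ is $\tau$-stable then $M=\Lambda_0\otimes_{\Oo_F}\Oo_{\breve{F}}$ with $\Lambda_0$ of type $0$, and by Lemma \ref{lem3.2} $\Lambda_0$ lies in exactly two type-$2$ lattices $\Lambda_2,\Lambda_2'$, inside each of which $M$ is the $\kay$-rational line $\Lambda_0/\pi\Lambda_2$; if $M$ is not $\tau$-stable then $M+\tau M=M_2$ for a unique $\Lambda_2$, and I would check $[M_2:M]=1$ by comparing the two self-dual $\tau$-conjugate lattices $M$ and $\tau M$, so that $M$ is a non-$\kay$-rational point of $\mathbb P_{\bar\Lambda_2}$. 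This yields $\cN_{(1,1)}/\bar\kay=\bigcup_{\Lambda_2}\mathbb P_{\bar\Lambda_2}$, and shows that any common point of $\mathbb P_{\bar\Lambda_2}$ and $\mathbb P_{\bar\Lambda_2'}$ is $\tau$-stable, forcing $\Lambda_0=\Lambda_2\cap\Lambda_2'$ of type $0$; conversely Lemma \ref{lem3.2} produces exactly one such $\Lambda_0$ when the vertices are adjacent, giving the single $\kay$-rational intersection point $pt_{\Lambda_0}$.

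For (3) I would pass through $\cN_{(1,1)}\cong\breve\Omega\times_{\Oo_{\breve{F}_0}}\Oo_{\breve{F}}$ and read off completed local rings from \eqref{eq:vertex} and \eqref{eq:edge}: away from a crossing the local model is formally smooth, whereas at a superspecial point the edge equation \eqref{eq:edge} gives $\Oo_{\breve{F}}[[T_0,T_1]]/(T_0T_1-\pi_0)=\Oo_{\breve{F}}[[T_0,T_1]]/(T_0T_1-\pi^2)$, which is not regular (the ramified base change introduces the node). Hence $\mathrm{Sing}$ is precisely the set of $pt_{\Lambda_0}$. I expect two genuine obstacles. The first is the self-duality and index bookkeeping: showing that every colength-one sublattice of $M_2$ actually lands in $\mathcal L(N)$, and that $[M+\tau M:M]=1$ in the non-$\tau$-stable case. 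The second is upgrading the $\bar\kay$-point bijection to the scheme-theoretic closed immersion and to the singularity statement, which cannot be done by lattice combinatorics alone and genuinely requires the $\breve\Omega$-model of Theorem \ref{alternative description F} together with the local equations \eqref{eq:vertex}--\eqref{eq:edge}, following the methods of \cite{KR3} and \cite{RTW}.
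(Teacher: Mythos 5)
Your proposal is correct, but it is noticeably more self-contained than the paper's own proof, which disposes of (1) by citing \cite[Lemma 3.3, Proposition 3.4]{KR3} and of (3) by citing the local-model computation of \cite[Theorem 4.5]{P}, reserving a direct argument only for (2). Where you do overlap with the paper, in part (2), the mechanisms differ slightly: the paper deduces that a common point of $\mathbb P_{\bar\Lambda_2}$ and $\mathbb P_{\bar\Lambda_2'}$ forces $\Lambda_2\cap\Lambda_2'$ to be of type $0$ from the self-duality $M(x)=M(x)^\sharp$, whereas you first show that a non-$\tau$-stable $M$ determines its $\Lambda_2$ uniquely via $M+\tau M$, so that any common point must be $\tau$-stable and hence of the form $\Lambda_0\otimes\Oo_{\breve F}$. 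Both are correct (and both are really the argument of \cite{KR3}); your index computation $2[M_2:M]=[M_2:M\cap\tau M]+[M:M\cap\tau M]=2$ is a clean way to get $[M_2:M]=1$, and your observation that the reduced form $\pi(\cdot,\cdot)$ on $\bar\Lambda_2$ is a genuinely $\bar\kay$-bilinear alternating form (because the relevant conjugation is trivial on $\Oo_{\breve F_0}$, hence on residue fields) is exactly the point that makes every line Lagrangian. For (3), your route through the explicit charts \eqref{eq:vertex} and \eqref{eq:edge} — regularity away from crossings, and non-regularity of $\Oo_{\breve F}[[T_0,T_1]]/(T_0T_1-\pi^2)$ at a crossing since the relation lies in $\mathfrak m^2$ — is a legitimate replacement for the citation to Pappas, and is in fact consistent with the local computations the paper carries out in Section \ref{coordinatechartsKramermodel}. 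The one step you should not leave implicit is the dictionary identifying a type-$2$ vertex lattice $\Lambda_2\subset C$ with a homothety class $[\Lambda]$ of $\Oo_{F_0}$-lattices in $F_0^2$ (and a superspecial point with a crossing of $\breve\Omega$); both your closed-immersion claim in (1) and your singularity claim in (3) pass through it, and it is only set up in the paper via $\Lambda=((\Lambda_2\otimes_{\Oo_{F_0}}\Oo_E)_0)^\tau$ in Section \ref{sect:decomposition}. With that correspondence in hand, your argument closes.
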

\begin{proof} For the proof of $(1)$, see \cite[Lemma 3.3, Proposition 3.4]{KR3}. For $(2)$, if a point $x$ lies in the intersection between $\mathbb P_{\bar\Lambda_2} $ and $ \mathbb P_{\bar\Lambda_2'}$, then $M(x)\subset \Lambda_2\otimes_{\Oo_F} \Oo_{{\breve{F}}} \cap  \Lambda_2'\otimes_{\Oo_F} \Oo_{{\breve{F}}}$ by the description of $i_{\bar\Lambda_2}$ in $(1)$. Together with the fact $M(x)=M(x)^{\sharp}$, this implies $\Lambda_2\cap \Lambda_2'=\Lambda_0$, where $\Lambda_0$ is some vertex lattice of type $0$. The converse is straightforward. Part $(3)$ is immediate from part $(2)$ and the description of the singular locus of $\cN_{(1,1)}$ by local model (see for example \cite[Theorem 4.5]{P}).
\end{proof}

Recall that the blow up $\Phi:\cN^{\Kra}\rightarrow \cN_{(1,1)}$ is an isomorphism outside the singular locus Sing.
\begin{proposition}\label{prop Phi}
(1) For a type $2$  lattice $\Lambda_2$, the Zariski closure of $\Phi^{-1}(\bP_{\bar\Lambda_2}\setminus Sing)$ is a projective line over $\bar\kay$ which we still denote by $\bP_{\bar\Lambda_2}$.

(2) For a vertex lattice $\Lambda_0$ of type $0$, $\Phi^{-1}(pt_{\Lambda_0})$ is a projective line over $\bar\kay$ which we denote by $\Exc_{\Lambda_0}$.

(3) On the special fiber of $\cN^{\Kra}$, two different lines $\bP_{\bar{\Lambda}_2}$ and $ \bP_{\bar{\Lambda}_2'}$ never intersect, and two different lines $\Exc_{\Lambda_0}$ and $\Exc_{\Lambda_0'}$ never intersect. $\bP_{\bar{\Lambda}_2} $ and $\Exc_{\Lambda_0}$ intersect at one point if $\Lambda_0\subset \Lambda_2$, otherwise they do not intersect.
\end{proposition}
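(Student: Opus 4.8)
The plan is to reduce all three statements to the single local picture of resolving an ordinary double point, feeding off the two structural inputs already quoted: that $\Phi$ is an isomorphism away from $\mathrm{Sing}$, and that each $\Phi^{-1}(pt_{\Lambda_0})$ is a $\bP^1_{\bar\kay}$. Part (2) then requires nothing beyond a definition: I simply set $\Exc_{\Lambda_0}:=\Phi^{-1}(pt_{\Lambda_0})$, which is a projective line by Kr\"amer's proposition.

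For part (1), I would first record, using Proposition \ref{proposition about reduced locus of N_F}(2) and Lemma \ref{lem3.2}, that the superspecial points lying on $\bP_{\bar\Lambda_2}$ are exactly the $pt_{\Lambda_0}$ with $\Lambda_0\subset\Lambda_2$, of which there are $\q+1$. Since $\Phi$ is an isomorphism over the complement of $\mathrm{Sing}$, the preimage $\Phi^{-1}(\bP_{\bar\Lambda_2}\setminus\mathrm{Sing})$ is isomorphic to $\bP^1_{\bar\kay}$ with these $\q+1$ points removed. I take its Zariski closure in $\cN^{\Kra}$: because $\cN^{\Kra}$ is regular with semi-stable reduction, every irreducible component of its special fiber is a smooth projective curve, and this closure is such a component, namely the strict transform of the component $\bP_{\bar\Lambda_2}$ of $\cN_{(1,1)}/\bar\kay$ (the only components $\Phi$ contracts are the $\Exc_{\Lambda_0}$, lying over the zero-dimensional set $\mathrm{Sing}$). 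A smooth projective curve with a dense open isomorphic to an open subset of $\bP^1$ is itself $\bP^1$, which gives (1).

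The heart is part (3), and here I would use the explicit local equation \eqref{eq:edge}. Under $\cN_{(1,1)}/\bar\kay\cong\cN/\bar\kay$ and the isomorphism $\cN\cong\cM\cong\breve\Omega$ of Theorem \ref{alternative description F}, a superspecial point $pt_{\Lambda_0}$ corresponds to the double point of $\breve\Omega$ attached to an edge $[\Lambda,\Lambda']$, where $\Lambda,\Lambda'$ match the two type-$2$ lattices $\Lambda_2,\Lambda_2'$ containing $\Lambda_0$; the branches are distinguished through \eqref{eq:embedding}, though the final statement is symmetric in the two branches. By \eqref{eq:edge}, after base change from $W$ to $\Oo_{\breve F}$ so that $\pi_0=\pi^2$, the complete local ring of $\cN_{(1,1)}$ at $pt_{\Lambda_0}$ is $\Oo_{\breve F}[[T_0,T_1]]/(T_0T_1-\pi^2)$, an ordinary double point whose branches $\{T_0=0\}$ and $\{T_1=0\}$ are the local branches of $\bP_{\bar\Lambda_2}$ and $\bP_{\bar\Lambda_2'}$. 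Because $\cN^{\Kra}$ is a resolution of this $A_1$ singularity with irreducible exceptional fiber $\bP^1$, it is locally the minimal resolution, i.e. the blow-up of the maximal ideal $(T_0,T_1,\pi)$; on its two charts one reads off directly that the strict transforms of $\{T_0=0\}$ and $\{T_1=0\}$ each meet the exceptional $\bP^1$ transversally in a single point, that these two points are distinct, and that the strict transforms are disjoint from each other.

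With this local model, part (3) follows formally. Two distinct lines $\bP_{\bar\Lambda_2},\bP_{\bar\Lambda_2'}$ can meet only over a point of $\mathrm{Sing}$, since elsewhere $\Phi$ is an isomorphism and they are already disjoint in $\cN_{(1,1)}$; over each such $pt_{\Lambda_0}$ the computation separates their strict transforms, so they never meet. Distinct $\Exc_{\Lambda_0},\Exc_{\Lambda_0'}$ lie over distinct points and are disjoint. Finally $\bP_{\bar\Lambda_2}$ meets $\Exc_{\Lambda_0}$ precisely when $pt_{\Lambda_0}\in\bP_{\bar\Lambda_2}$, i.e. iff $\Lambda_0\subset\Lambda_2$, and in that case the local model yields exactly one intersection point. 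I expect the main obstacle to be the bookkeeping inside the highlighted local step: upgrading ``$\Phi^{-1}(pt)\cong\bP^1$'' to the identification of $\cN^{\Kra}$ with the blow-up of the node, and correctly matching the branches $T_0,T_1$ to the lattices $\Lambda_2,\Lambda_2'$ via \eqref{eq:embedding}, so that the abstract resolution is pinned down compatibly with the combinatorics of the tree $\mathcal B(\mathrm{PU}(C))$.
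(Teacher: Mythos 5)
Your proposal is correct and follows essentially the same route as the paper, whose proof of this proposition is simply a forward reference to the explicit blow-up charts of the node $T_0T_1=\pi_0$ at $(T_0,T_1,\pi)$ computed in Section \ref{coordinatechartsKramermodel}, from which (1)--(3) are read off exactly as you describe. The only extra content you supply is the minimal-resolution argument identifying $\cN^{\Kra}$ locally with that blow-up, a step the paper takes for granted from Kr\"amer's construction.
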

\begin{proof}
The above facts will be clear after the discussion in Section \ref{coordinatechartsKramermodel}.
\end{proof}

\begin{figure}[h]
    \centering
\begin{tikzpicture}[scale=0.25]
\draw [color=blue] [rotate=45](0,0) circle (4);
\draw [color=red][rotate=45] (7,0) circle (3);
\draw [color=red][rotate=45] (-7,0) circle (3);
\draw [color=red][rotate=45] (0,7) circle (3);
\draw [color=red] [rotate=45](0,-7) circle (3);
\draw [color=blue][rotate=45] (12,0) circle (2);
\draw [color=blue] [rotate=45](-12,0) circle (2);
\draw [color=blue][rotate=45] (0,-12) circle (2);
\draw [color=blue] [rotate=45](0,12) circle (2);
\draw [color=red][rotate=45] (15,0) circle (1);
\draw [color=red][rotate=45] (-15,0) circle (1);
\draw [color=red][rotate=45] (0,15) circle (1);
\draw [color=red][rotate=45] (0,-15) circle (1);
\draw [color=red] [rotate=45](12,3) circle (1);
\draw [color=red][rotate=45] (12,-3) circle (1);
\draw [color=red][rotate=45] (3,12) circle (1);
\draw [color=red] [rotate=45](-3,12) circle (1);
\draw [color=red] [rotate=45](-12,3) circle (1);
\draw [color=red] [rotate=45](-12,-3) circle (1);
\draw [color=red][rotate=45] (3,-12) circle (1);
\draw [color=red][rotate=45] (-3,-12) circle (1);
\draw [dotted] [rotate=45](16,0)--(18,0);
\draw [dotted][rotate=45] (-16,0)--(-18,0);
\draw [dotted][rotate=45] (12,4)--(12,6);
\draw [dotted][rotate=45] (12,-4)--(12,-6);
\draw [dotted][rotate=45] (-12,4)--(-12,6);
\draw [dotted][rotate=45] (-12,-4)--(-12,-6);
\draw [dotted] [rotate=45](0,16)--(0,18);
\draw [dotted][rotate=45] (4,12)--(6,12);
\draw [dotted] [rotate=45](-4,12)--(-6,12);
\draw [dotted] [rotate=45](0,-16)--(0,-18);
\draw [dotted] [rotate=45]
(4,-12)--(6,-12);
\draw [dotted][rotate=45] (-4,-12)--(-6,-12);
\node at (0,0) [color=blue] {$\bP _{\bar{\Lambda}_2}$};
\node at (5.3,5) [color=red] {$\Exc _{\Lambda_0}$};
\node at (8.57,8.57) [color=blue] {$\bP _{\bar{\Lambda}'_2}$};
\end{tikzpicture}
\caption{Special fiber of $\cN^{\Kra}$ when $\q=3$.}
    \label{specialfiberpic}
\end{figure}
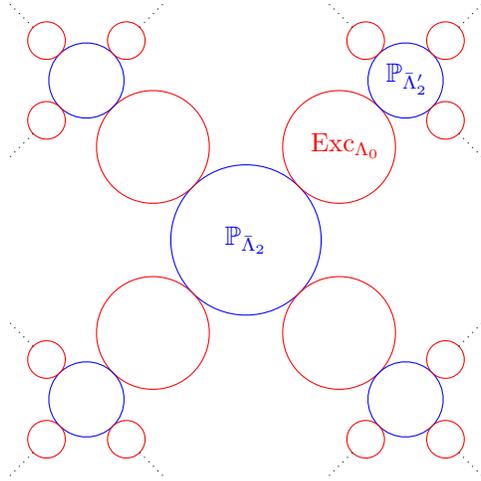

A superspecial point $pt_{\Lambda_0}$ belongs to $\cZ(\bx)$ if and only if $\Exc_{\Lambda_0}\subseteq \cZ^{\Kra}(\bx)$. So in the rest of the section, we only need to study $\cZ(\bx)$.

\subsection{Bruhat-Tits trees of special cycles: rank $1$ case}

For an $\bx \in \bV$, let $\mathcal{T} (\mathbf{x})$ be as in  Theorem \ref{maintheo2}. Alternatively, in terms of $\cZ(\bx)$ (instead of $\cZ^{\Kra}(\bx)$), we have
\begin{equation*}
    \cT(\bx)=\{\Lambda_2 \text{ is of type } 2\mid \bP_{\bar\Lambda_2} \cap \cZ(\bx)\neq \emptyset\}\cup \{\Lambda_0 \text{ is of type } 0\mid pt_{\Lambda_0} \in \cZ(\bx)\}.
\end{equation*}
We will also view a lattice in $\mathcal T(\bf x)$ as a vertex or an edge in $\mathcal B$ depending on whether it is of type $2$ or $0$. We will see  that $\mathcal T(\bx)$ is a tree by Corollaries \ref{ballcorollary} and \ref{cor:cone}.

\begin{lemma} \label{lem3.6} Let $\bx \in \bV$. Then
	$$\mathcal{Z}(\mathbf{x})(\bar{\kay})\cap \mathbb{P}_{\bar{\Lambda}_2}(\bar{\kay})=\left \{\begin{array}{cc}
	\mathbb{P}_{\bar{\Lambda}_2}(\bar{\kay}),     &\text{if}\ \mathbf{x}(e)\in \pi\Lambda_2,  \\
	\text{a single point},  &\text{if}\   \mathbf{x}(e)\in \Lambda_2 \setminus \pi\Lambda_2,  \\
	\emptyset    , &\text{if}\  \mathbf{x}(e) \notin \Lambda_2.
	\end{array}
	\right.
	$$
In particular,  $\Lambda_2 \in \mathcal T(\bx)$ if and only if $\bx(e) \in \Lambda_2$.
\end{lemma}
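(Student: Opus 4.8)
The plan is to reduce the moduli condition defining $\cZ(\bx)$ to a single lattice‑membership statement, and then to read off the three cases by elementary linear algebra inside $(\Lambda_2/\pi\Lambda_2)\otimes_{\Oo_F}\bar\kay$. First I would record the Dieudonné‑theoretic criterion: a point $x=(X,\iota,\lambda,\rho)\in\cN_{(1,1)}(\bar\kay)$ lies in $\cZ(\bx)(\bar\kay)$ if and only if $\bx(e)\in M(x)$. Indeed, after fixing $\rho_Y=\id$, the quasi‑isogeny $\rho_X^{-1}\circ\bx\circ\rho_Y$ lifts to an honest homomorphism precisely when $\bx\bigl(M(\bY)\bigr)\subseteq M(x)$ inside $N$; since $M(\bY)=\Oo_{\breve{F}}e+\Oo_{\breve{F}}f$ with $f=Fe$, and since $\bx$ commutes with $F$ while $M(x)$ is an $F$‑stable Dieudonné lattice, this is equivalent to the single condition $\bx(e)\in M(x)$. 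This criterion is exactly the lattice translation already used in \cite[Lemma 3.3]{Shi1} together with Lemma \ref{lem2.1}.

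Next I would use Proposition \ref{proposition about reduced locus of N_F}(1) to make $M(x)$ explicit for $x=i_{\bar\Lambda_2}(l)\in\bP_{\bar\Lambda_2}(\bar\kay)$: here $M(x)$ is the preimage of the line $l$ under the projection $\Lambda_2\otimes_{\Oo_F}\Oo_{\breve{F}}\twoheadrightarrow(\Lambda_2/\pi\Lambda_2)\otimes_{\Oo_F}\bar\kay$, so that
\[
\pi\bigl(\Lambda_2\otimes_{\Oo_F}\Oo_{\breve{F}}\bigr)\subseteq M(x)\subseteq \Lambda_2\otimes_{\Oo_F}\Oo_{\breve{F}},\qquad
M(x)=\{\,m\in\Lambda_2\otimes_{\Oo_F}\Oo_{\breve{F}}:\ \bar m\in l\,\}.
\]
Because $\bx(e)\in C=N^\tau$ and $\Lambda_2\subset C$ is an $\Oo_F$‑lattice, choosing an $\Oo_F$‑basis of $\Lambda_2$ (which is simultaneously an $F$‑basis of $C$ and a $\breve{F}$‑basis of $N$) shows $\bigl(\Lambda_2\otimes_{\Oo_F}\Oo_{\breve{F}}\bigr)\cap C=\Lambda_2$, whence $\bx(e)\in\Lambda_2\otimes_{\Oo_F}\Oo_{\breve{F}}$ if and only if $\bx(e)\in\Lambda_2$.

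With these two inputs the trichotomy falls out immediately. If $\bx(e)\in\pi\Lambda_2$, then $\bx(e)$ reduces to $0$ in $(\Lambda_2/\pi\Lambda_2)\otimes\bar\kay$, hence lies in every line $l$ and thus in $M(x)$ for all $x$, giving the full $\bP_{\bar\Lambda_2}(\bar\kay)$. If $\bx(e)\in\Lambda_2\setminus\pi\Lambda_2$, then its reduction $\overline{\bx(e)}$ is a nonzero vector of the two‑dimensional space $(\Lambda_2/\pi\Lambda_2)\otimes\bar\kay$, and $\bx(e)\in M(x)$ holds exactly for the unique line $l=\bar\kay\cdot\overline{\bx(e)}$, i.e.\ at a single point. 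If $\bx(e)\notin\Lambda_2$, then $\bx(e)\notin\Lambda_2\otimes_{\Oo_F}\Oo_{\breve{F}}\supseteq M(x)$, so the intersection is empty. The final clause, that $\Lambda_2\in\cT(\bx)$ iff $\bx(e)\in\Lambda_2$, is then simply the observation that the first two cases are precisely those with nonempty intersection.

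The only genuinely delicate point is the first step—the passage from the deformation‑theoretic lifting condition to $\bx(e)\in M(x)$; everything afterward is bookkeeping with lattices in $C$ and one‑dimensional subspaces of $\Lambda_2/\pi\Lambda_2$. Since I expect to import that criterion verbatim from the Dieudonné‑module description, the remaining argument is short and self‑contained.
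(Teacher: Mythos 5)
Your proposal is correct and follows essentially the same route as the paper: reduce membership in $\cZ(\bx)(\bar\kay)$ to the single condition $\bx(e)\in M(x)$ (the paper absorbs $\bx(f)$ via $\bx(f)=\bx(Ve)\in VM(x)\subset M(x)$, while you use $F$-stability, which is equivalent), then read off the three cases from the explicit description of $M(x)$ as the preimage of a line in $(\Lambda_2/\pi\Lambda_2)\otimes\bar\kay$. Your extra remark that $(\Lambda_2\otimes_{\Oo_F}\Oo_{\breve F})\cap C=\Lambda_2$ is a small piece of bookkeeping the paper leaves implicit but changes nothing.
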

\begin{proof}
Let $x=(X,\iota,\lambda,\rho)\in \cN_{(1,1)}(\bar\kay)$, let $M(x)=\rho(M(X))\subset N$ as in Proposition \ref{k bar points of N}. Then
	\begin{alignat*}{2}
	x\in \mathcal{Z}(\mathbf{x})(\bar{\kay})\iff & \mathbf{x}(M(\mathbb{Y}))\subset M(x)             \\
	\iff & \mathbf{x}(e)\in M(x),\ \mathbf{x}(f)\in M(x)\\
	\iff & \mathbf{x}(e)\in M(x)
	\end{alignat*} since $\mathbf{x}(e)\in M(x)$ implies $\mathbf{x}(f)=\bx (Ve)\in VM(x)\subset M(x)$.
	\newline
	On the other hand,
	\begin{alignat*}{2}
	x\in \mathbb{P}_{\bar{\Lambda}_2}(\bar{\kay})\iff &  \pi\Lambda_2 \otimes_{\mathcal{O}_F} {\mathcal{O}}_{\breve F} \subset M(x)\subset \Lambda_2 \otimes_{\mathcal{O}_{ F}} { \mathcal{O}}_{\breve F}.
	\end{alignat*}
If $\mathbf{x}(e) \in \pi\Lambda_2$, then  $\mathbf{x}(e) \in M(x)$ for any $x$ such that $\pi\Lambda_2 \subset M(x) \subset \Lambda_2 $.  This implies $\bP_{\bar{\Lambda}_2}(\bar \kay)  \subset \cZ(\bx)(\bar\kay)$.

If $\mathbf{x}(e)\in \Lambda_2 \setminus \pi\Lambda_2 $, then the image of $\mathbf{x}(e)$ is contained in exactly one line in $\bar{\Lambda}_{F, 2}$ and thus gives a single point in  $\mathbb P^{1}_{\bar{\Lambda}_2}(\bar\kay)$.

 Finally, if $\mathbf{x}(e)\not \in \Lambda_2$, it can not lie in any sub-lattice of $\Lambda_2\otimes_{\mathcal{O}_F}\mathcal{O}_{
 \breve{F}}$.
	
\end{proof}

From now on, fix $0\ne \bx \in \bV$, and let $b=\mathbf{x}(e), \ q(b)=(b,b)= -\delta^2 h(\bx, \bx)$.

\begin{lemma}\label{unique}
	Assume $q(b) \ne 0$. Then there exist a unique vertex $\Oo_F$-lattice $\Lambda_b$ of type $0$  such that $ \pi^{-\ord_{\pi_0}(q(b))} b\in \Lambda_b \setminus\pi \Lambda_b$.
\end{lemma}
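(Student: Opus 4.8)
The plan is to renormalize $b$ to a unit-norm vector and then reduce the statement to a rigidity property of self-dual lattices, which I would settle by an orthogonal splitting.

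First I would set $m=\ord_{\pi_0}(q(b))$ and $b'=\pi^{-m}b$, so that the lemma asserts the existence and uniqueness of a type-$0$ lattice in which $b'$ is primitive (i.e. lies in $\Lambda_b\setminus\pi\Lambda_b$). Using $\bar\pi=-\pi$ and $(\lambda x,\lambda x)=\norm_{F/F_0}(\lambda)(x,x)$ together with $\norm_{F/F_0}(\pi)=-\pi_0$, I compute $q(b')=(-1)^m\pi_0^{-m}q(b)$; since $q(b)=(b,b)\in\Oo_{F_0}$ has $\ord_{\pi_0}$ equal to $m$, this shows $q(b')\in\Oo_{F_0}^\times$. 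I would also record that a vertex lattice of type $0$ is precisely a self-dual lattice (type $0$ means $\Lambda^\sharp=\Lambda$, equivalently its Gram matrix lies in $\mathrm{GL}_2(\Oo_F)$). Thus the claim becomes: a vector $b'\in C$ with $(b',b')\in\Oo_{F_0}^\times$ is primitive in one and only one self-dual $\Oo_F$-lattice.

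Because $q(b')\neq 0$, the line $Fb'$ is nondegenerate and $C=Fb'\perp b'^\perp$, with $b'^\perp=Fz$ a nondegenerate line, so $q(z)\neq 0$. The unit norm of $b'$ is what rigidifies everything: if $\Lambda$ is self-dual with $b'$ primitive then $(\Lambda,\Lambda)\subseteq\Oo_F$, so for $y\in\Lambda$ the coefficient $(y,b')/(b',b')$ lies in $\Oo_F$, and the orthogonal projection $y\mapsto \frac{(y,b')}{(b',b')}\,b'$ carries $\Lambda$ into $\Oo_F b'\subseteq\Lambda$. Hence $\Lambda=\Oo_F b'\perp(\Lambda\cap b'^\perp)$, and writing $\Lambda\cap b'^\perp=\Oo_F v'$, unimodularity of the diagonal Gram matrix $\mathrm{diag}((b',b'),(v',v'))$ forces $(v',v')\in\Oo_{F_0}^\times$.

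It then remains to see that the rank-one piece $\Oo_F v'$ is uniquely determined and does exist. Any generator of a lattice in the line $Fz$ has the form $v'=\lambda z$, and $\ord_{\pi_0}((v',v'))=\ord_\pi(\lambda)+\ord_{\pi_0}(q(z))$; as $\ord_\pi(\lambda)$ ranges over all of $\Z$, there is exactly one value making $(v',v')$ a unit, so $\Oo_F v'$ is unique and exists. Running the construction forward yields the self-dual lattice $\Lambda_b=\Oo_F b'\perp\Oo_F v'$, in which $b'$ is a basis vector and hence primitive. I expect no serious obstacle here: the one point that must be handled with care is the normalization step, since it is precisely the passage to a unit-norm vector $b'$ that makes the projection land in $\Oo_F b'$ and thereby forces the orthogonal splitting; without it $b'$ would sit primitively in many self-dual lattices, and the uniqueness assertion would fail.
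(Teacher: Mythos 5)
Your proof is correct and follows essentially the same route as the paper's: normalize $b$ to a unit-norm vector, split any self-dual lattice containing it orthogonally as $\Oo_F b'\perp(\Lambda\cap b'^{\perp})$, and observe that unimodularity pins down the rank-one complement uniquely. The only cosmetic difference is that you prove the orthogonal splitting directly via the projection $y\mapsto\frac{(y,b')}{(b',b')}b'$, whereas the paper cites Jacobowitz (Proposition 4.2) for it.
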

\begin{proof} Replacing $b$ by $\pi^{-\ord_{\pi_0}(q(b))} b$ if necessary, we may assume $\ord_{\pi_0}(q(b)) =0$, i.e.,
 $(b,b)=u_0\in \mathcal{O}_{F_0}^{\times}$. Write $(F b)^\perp =F c$, then $C =F b \oplus F c$. Since $q(b)$ is a unit, by \cite[Proposition 4.2]{J}, every vertex lattice of type $0$ containing $b$ has the orthogonal decomposition
 $$
 \Lambda =\Oo_F b + \Lambda_1, \quad \Lambda_1= \Lambda \cap F c =\Oo_F b'
 $$
  for some $b' =r c$. $\Lambda$ is a vertex lattice of type $0$ if and only if $q(b') = r \bar r q(c)$ is a unit. Such an $r$ exists and is unique up to a unit in $\Oo_F^\times$. So the vertex $\Oo_F$-lattice $\Lambda_b$ of type $0$ such that $b \in \Lambda_b\setminus\pi\Lambda_b$ exists and is unique.

\end{proof}

\begin{definition}\label{def of distance between type 0 and type 2 lattice}Assume that each edge in $\mathcal{B}$ has length $1$. Let $\Lambda$ and $\Lambda'$ be two vertex $\Oo_F$-lattices. The distance $d(\Lambda, \Lambda')$ is defined to be
\begin{enumerate}
\item the distance between the two vertices in $\mathcal B$ if both are of type $2$,

\item the distance between the vertex $\Lambda$ and the midpoint of the edge $\Lambda'$ in $\mathcal B$ if $\Lambda$ is of type $2$ and $\Lambda'$ is of type $0$,

\item  the distance between the midpoints of the edges $\Lambda$ and $\Lambda'$ in $\mathcal B$ if both are of type $0$.
\end{enumerate}
\end{definition}

For a vertex lattice $\Lambda$, we define
\begin{equation} \label{eq:n}
n(b,\Lambda)\coloneqq \mathrm{max}\{n\in \mathbb{Z} \mid \pi^{-n}b\in \Lambda \}.
\end{equation}
It is easy to check that for
 $\Lambda_2\cap \Lambda'_2=\Lambda_0$,
\begin{equation} \label{n(b,Lambda_0)=min}
n(b, \Lambda_0) = \hbox{min} (n(b,\Lambda_2), n(b, \Lambda'_2)).
\end{equation}
We have the following reformulation of Lemma \ref{lem3.6}.
\begin{lemma}\label{lem:meaning of n(b,Lambda)}
If $b=\bx(e)$,
$n(b,\Lambda)\geq 0$ if and only if $\Lambda\in \cT(\bx)$ .
\end{lemma}

\begin{lemma}\label{bruhat2} Assume $q(b) \ne 0$.
\begin{enumerate}
\item If  $\Lambda$ is a vertex $\Oo_F$-lattice of type $2$, then
  $$
  n(b,\Lambda)=\ord_{\pi_0}(q(b))-d(\Lambda,\Lambda_b)+\frac{1}{2}.
  $$

\item If $\Lambda$ is a vertex $\Oo_F$-lattice of type $0$, then

 $$
 n(b,\Lambda)=\ord_{\pi_0}(q(b))-d(\Lambda,\Lambda_b).$$
\end{enumerate}
\end{lemma}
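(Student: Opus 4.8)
The plan is to reduce to the normalized case $\ord_{\pi_0}(q(b))=0$ and then propagate the value of $n(b,\cdot)$ outward from $\Lambda_b$ along the tree $\mathcal{B}$ by induction on the distance. First I would replace $b$ by $\pi^{-\ord_{\pi_0}(q(b))}b$. Since $(\pi^{-1}x,\pi^{-1}y)=-\pi_0^{-1}(x,y)$, this shifts $\ord_{\pi_0}(q(b))$ to $0$, shifts every $n(b,\Lambda)$ by $-\ord_{\pi_0}(q(b))$, and leaves $\Lambda_b$ (hence all distances $d(\Lambda,\Lambda_b)$) unchanged by Lemma \ref{unique}. So it suffices to prove, when $q(b)\in\Oo_{F_0}^\times$ and $b\in\Lambda_b\setminus\pi\Lambda_b$, that $n(b,\Lambda_2)=\tfrac12-d(\Lambda_2,\Lambda_b)$ for type $2$ vertices and $n(b,\Lambda_0)=-d(\Lambda_0,\Lambda_b)$ for type $0$ edges.

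For the base case I would record two facts about a vertex lattice and its edges. For a type $2$ lattice one has $\Lambda_2^\sharp=\pi\Lambda_2$, so the $\Oo_F$-valued form $\pi(\,,\,)$ descends to $\Lambda_2/\pi\Lambda_2\cong\kay^2$; because $p\neq2$ and $\bar a\equiv a\pmod\pi$ for $a\in\Oo_F$, this reduction is a nondegenerate \emph{alternating} form, so every one of the $\q+1$ lines in $\Lambda_2/\pi\Lambda_2$ is isotropic and hence gives a type $0$ sublattice $\Lambda_0$ with $\Lambda_0/\pi\Lambda_2$ equal to that line; this matches the $\q+1$ edges at $\Lambda_2$ in Lemma \ref{lem3.2}. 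By contrast, the reduction of $(\,,\,)$ on the self-dual lattice $\Lambda_b/\pi\Lambda_b$ is a nondegenerate \emph{symmetric} form whose two isotropic lines correspond to the two type $2$ lattices $\Lambda_2^{(1)},\Lambda_2^{(2)}$ containing $\Lambda_b$. Since $q(b)\in\Oo_{F_0}^\times$, the image $\bar b$ is anisotropic, hence lies on neither isotropic line; writing $\Lambda_2^{(i)}=\Lambda_b+\Oo_F\pi^{-1}w_i$ as in Lemma \ref{lem3.2}, one reads off that $\pi^{-1}b\notin\Lambda_2^{(i)}$, so $n(b,\Lambda_2^{(i)})=0$, establishing the formula at distance $\tfrac12$.

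The heart of the argument is a one-step propagation lemma at a type $2$ vertex $\Lambda_2$ with $n_2:=n(b,\Lambda_2)$. By definition $\pi^{-n_2}b\in\Lambda_2\setminus\pi\Lambda_2$, so its image in $\Lambda_2/\pi\Lambda_2$ is a nonzero vector lying on a unique line $\ell^\ast$. For the edge $\Lambda_0$ in direction $\ell$ one checks directly (as in Lemma \ref{lem3.6}) that $\pi^{-n_2}b\in\Lambda_0$ iff $\ell=\ell^\ast$, giving $n(b,\Lambda_0)=n_2$ for that single edge and $n(b,\Lambda_0)=n_2-1$ for the other $\q$ edges (the lower bound coming from $\pi^{-(n_2-1)}b\in\pi\Lambda_2\subset\Lambda_0$). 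Thus exactly one edge at $\Lambda_2$ preserves the value; call it the special edge. I would then prove by induction on $d(\Lambda_2,\Lambda_b)$ the strengthened statement that $n(b,\Lambda_2)=\tfrac12-d(\Lambda_2,\Lambda_b)$ \emph{and} the special edge at $\Lambda_2$ is the first edge on the geodesic toward $\Lambda_b$. The base case $d=\tfrac12$ is the previous paragraph. For the inductive step, let $\Lambda_2^-$ be the neighbor one step closer to $\Lambda_b$ and $\Lambda_0^-=\Lambda_2\cap\Lambda_2^-$; by the inductive description of the special edge at $\Lambda_2^-$, the edge $\Lambda_0^-$ is non-special there, so $n(b,\Lambda_0^-)=n(b,\Lambda_2^-)-1$, and then \eqref{n(b,Lambda_0)=min} forces $n(b,\Lambda_2)=n(b,\Lambda_0^-)=\tfrac12-d(\Lambda_2,\Lambda_b)$; since this edge realizes the value $n(b,\Lambda_2)$ it is the unique special edge at $\Lambda_2$, completing the induction.

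Finally the type $0$ case follows from \eqref{n(b,Lambda_0)=min}: an edge $\Lambda_0$ at distance $d_0$ has endpoints at distances $d_0\pm\tfrac12$, whose now-known values are $1-d_0$ and $-d_0$, so $n(b,\Lambda_0)=-d_0=-d(\Lambda_0,\Lambda_b)$. Undoing the normalization restores the $\ord_{\pi_0}(q(b))$ term in both formulas. I expect the main obstacle to be the propagation lemma together with the bookkeeping that the special direction always points toward $\Lambda_b$, i.e.\ verifying that the unique value-preserving edge is exactly the geodesic edge, since this is what converts the purely local computation of $n(b,\cdot)$ into the global distance formula. The fact that all $\q+1$ lines are isotropic, so that $n(b,\cdot)$ genuinely drops on $\q$ of the $\q+1$ directions, is where the hypotheses $p\neq2$ and the isotropy of $C$ enter.
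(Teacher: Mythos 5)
Your proof is correct and follows essentially the same strategy as the paper's: normalize so that $q(b)$ is a unit, verify the base case at the two type-$2$ lattices containing $\Lambda_b$, induct outward on the distance in the tree using the fact that at each type-$2$ vertex exactly one adjacent direction preserves the value of $n(b,\cdot)$ while the other $q$ directions drop it by one, and then deduce the type-$0$ case from (\ref{n(b,Lambda_0)=min}). The only real difference is cosmetic: you establish the local ``one up, $q$ down'' dichotomy by reducing $\pi^{-n_2}b$ modulo $\pi\Lambda_2$ and identifying the edges at $\Lambda_2$ with lines in $\Lambda_2/\pi\Lambda_2$, whereas the paper carries out the same computation in explicit coordinates with the neighbours $\Lambda_\infty$ and $\Lambda_k$ of Lemma \ref{lem3.2}.
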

\begin{proof} Claim (2) follows from Claim (1) and (\ref{n(b,Lambda_0)=min}).
 Now we prove Claim (1). 	Without lost of generality we can assume $\ord_{\pi_0}(q(b))=0$. We prove the lemma by induction on $d(\Lambda,\Lambda_b)$. Let us treat the case $d(\Lambda,\Lambda_b)=\frac{1}{2}$ first, i.e. $\Lambda_b \subset \Lambda$. We have by Lemma \ref{lem3.2}
$$
\Lambda_b =\Oo_F w_0 + \Oo_F w_1, \quad  \Lambda = \Oo_F \pi^{-1}w_0 + \Oo_F w_1.
$$
Write $b =xw_0 + y w_1$ with $q(b) = x \bar y + y \bar x \in  \Oo_{F_0}^\times$. Then  $ x,  y \in  \Oo_F^\times$, and  $b \in \Lambda \setminus\pi \Lambda$. Hence $n(b,\Lambda)=0$.

Now we assume the assertion holds for all $\Lambda$ such that $\frac{1}{2}\leq d(\Lambda,\Lambda_b)\leq d+\frac{1}{2}$. For a $\Lambda$ such that $d(\Lambda, \Lambda_b)=d+\frac{1}{2}\neq \frac{1}{2}$, which satisfies the formula $n:=n(b, \Lambda)=-d$, it suffices to show that all its adjacent vertices also satisfy the formula in the lemma.

Choose a basis $\{v_0,v_1\}=\{\pi^{-1}w_0,w_1\}$ of $\Lambda$ with Gram matrix $\pi^{-1} \kzxz {0} {1} {-1} {0}$.  By Lemma \ref{lem3.2}, the $\q+1$ neighbors of $\Lambda$ in the Bruhat-Tits tree are
\begin{align*}
	\Lambda_\infty=\mathrm{span}_{\mathcal{O} _F}\{\pi^{-1}v_0,\pi v_1\},\quad \Lambda_k=\mathrm{span}_{\mathcal{O} _F}\{\pi v_0,\pi^{-1}(kv_0+v_1)\},
\end{align*}
where $k$ runs through the representatives of $\Oo_{F_0}/(\pi_0)$.  We want to show the claim holds for all the neighbors. One of the neighbors will be closer to $\Lambda_b$ than other neighbors. Without loss of generality, we can assume $\Lambda_\infty$ is closer to $\Lambda_b$ than all the other $\Lambda_k$.  By induction, we know \begin{align*}
n(b,\Lambda_\infty)=n+1.
\end{align*}
By the definition of $n(b,\Lambda)$, we can write $$b=\pi^{n}(a_0v_0+a_1v_1)=\pi^{n+1}(\pi^{-1}a_0v_0+\pi^{-1}a_1v_1),$$ where $\ord_{\pi}(a_i)\geq 0$ and $\mathrm{min}\{\ord_{\pi}(a_0),\ord_{\pi}(a_1)\}=0$. Since $n(b,\Lambda_\infty)=n+1$, $\pi^{-1}a_0v_0+\pi^{-1}a_1v_1 \in \Lambda_\infty $ implies $\ord_{\pi}(a_1)\geq 2$. Hence $\ord_{\pi}(a_0)=0$. Claim:
	\begin{align*}\ord_{\pi}(a_1)\geq 2\text{ and }\ord_{\pi}(a_0)=0\implies a_0v_0+a_1v_1 \not \in \Lambda_k.\end{align*} To prove this, assume $a_0v_0+a_1v_1\in \Lambda_k$ which implies that we can find $a_0',a_1'\in \mathcal{O}_F$ such that \begin{align*}a_0v_0+a_1v_1= a_0' \pi v_0+a_1' \pi^{-1}(kv_0+v_1)=(a_0'\pi +a_1' \pi^{-1} k)v_0+a_1'\pi^{-1} v_1.\end{align*}
As a result,
\[\ord_{\pi}(a_0' \pi + a_1'\pi^{-1}k)=\ord_{\pi}(a_0' \pi + a_1 k)\geq
	\mathrm{min}\{\ord_{\pi}(a'_0 \pi),\ord_{\pi}(a_1)\}\geq 1.\]
But then $a_0' \pi + a_1'\pi^{-1}k$ can not equal to $a_0$ since we know $\ord_{\pi}(a_0)=0$,  which is a contradiction.
	
	The claim gives us that $n(b,\Lambda_k)\leq n-1$. we can easily check $\pi(a_0v_0+a_1v_1 )\in \Lambda_k$. Therefore $n(b,\Lambda_k)=n-1$ as claimed.
\end{proof}

Combining Lemma \ref{lem:meaning of n(b,Lambda)} with Lemma \ref{bruhat2}, we have the following:
\begin{corollary}\label{ballcorollary}
	Assume $q(b)\not=0$, $\mathcal{T} (\mathbf{x})$ is a ball centered at the midpoint of $\Lambda_b$ with radius $\ord_{\pi _0}(q(b))+\frac{1}{2}$. In  particular, $\mathcal T(\bx)$ is empty if and only $q(b) \notin \Oo_{F_0}$.
\end{corollary}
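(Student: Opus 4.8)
The plan is to turn the two membership conditions defining $\mathcal T(\bx)$ into distance inequalities in $\mathcal B$ using Lemmas \ref{lem3.6} and \ref{bruhat2}, and then to recognize the resulting set of vertices and edges as a metric ball about the midpoint of $\Lambda_b$.

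First I would treat the type $2$ vertices. By Lemma \ref{lem3.6}, a type $2$ lattice $\Lambda_2$ lies in $\mathcal T(\bx)$ precisely when $b=\bx(e)\in\Lambda_2$, which is the same as $n(b,\Lambda_2)\ge 0$. Substituting the formula of Lemma \ref{bruhat2}(1), namely $n(b,\Lambda_2)=\ord_{\pi_0}(q(b))-d(\Lambda_2,\Lambda_b)+\tfrac12$, this condition reads $d(\Lambda_2,\Lambda_b)\le \ord_{\pi_0}(q(b))+\tfrac12$.

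Next I would handle the type $0$ edges, which is the step needing the most care. Here $\Lambda_0\in\mathcal T(\bx)$ means $pt_{\Lambda_0}\in\cZ(\bx)$, and I claim this is equivalent to $b\in\Lambda_0$. Indeed, under Proposition \ref{k bar points of N} the superspecial point $pt_{\Lambda_0}$ corresponds to the $\tau$-stable lattice $M=\Lambda_0\otimes_{\Oo_F}\Oo_{\breve F}$, and the Dieudonn\'e-module criterion already used in the proof of Lemma \ref{lem3.6} gives $pt_{\Lambda_0}\in\cZ(\bx)\iff b\in M$; since $b\in C$ and $C\cap M=\Lambda_0$, this is exactly $b\in\Lambda_0$, i.e. $n(b,\Lambda_0)\ge 0$. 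By Lemma \ref{bruhat2}(2) this becomes $d(\Lambda_0,\Lambda_b)\le \ord_{\pi_0}(q(b))$. One could instead deduce this from \eqref{n(b,Lambda_0)=min}, writing $n(b,\Lambda_0)$ as the minimum of $n(b,\cdot)$ over the two type $2$ lattices containing $\Lambda_0$.

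Finally I would assemble the two inequalities. Setting $R=\ord_{\pi_0}(q(b))+\tfrac12$, a type $2$ vertex lies in $\mathcal T(\bx)$ iff its distance to the midpoint of $\Lambda_b$ is $\le R$, and since $d(\Lambda_0,\Lambda_b)$ is an integer, the bound $d(\Lambda_0,\Lambda_b)\le\ord_{\pi_0}(q(b))$ is equivalent to $d(\Lambda_0,\Lambda_b)\le R$ as well; thus $\mathcal T(\bx)$ is exactly the set of vertices and edges within distance $R$ of the midpoint of $\Lambda_b$, i.e. the asserted ball. Connectedness (hence the tree structure referred to in the text) follows at once, since an edge lies in $\mathcal T(\bx)$ iff both of its endpoints do. For the last assertion, the ball contains a vertex or an edge iff $R\ge\tfrac12$, i.e. iff $\ord_{\pi_0}(q(b))\ge 0$; as $q(b)\ne 0$ this holds iff $q(b)\in\Oo_{F_0}$, so $\mathcal T(\bx)$ is empty iff $q(b)\notin\Oo_{F_0}$. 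The only genuinely nontrivial point is the type $0$ identification $pt_{\Lambda_0}\in\cZ(\bx)\Leftrightarrow b\in\Lambda_0$ above; everything else is bookkeeping with the two distance formulas.
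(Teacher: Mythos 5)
Your argument is correct and is exactly the route the paper intends: the corollary is stated without proof as an immediate consequence of Lemma \ref{lem3.6} (membership of type $2$ vertices) and the distance formulas of Lemma \ref{bruhat2}, and your write-up simply supplies the details, including the only point the paper leaves genuinely implicit, namely that $pt_{\Lambda_0}\in\cZ(\bx)$ is equivalent to $b\in\Lambda_0$ via the Dieudonn\'e-module criterion $b\in M(x)$ applied to $M(pt_{\Lambda_0})=\Lambda_0\otimes_{\Oo_F}\Oo_{\breve F}$ and $C\cap M=\Lambda_0$. Nothing further is needed.
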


\begin{lemma}\label{conelemma}
	Assume that $b\neq 0$ and $q(b)=0$.
	Let $\Lambda$ be a vertex lattice of type $2$ and assume $n(b,\Lambda)=n$. Then there exists a unique type $2$ adjacent lattice $\Lambda_+$ of $\Lambda$ such that $n(b,\Lambda_+)=n+1$. For any other type $2$ adjacent vertex lattice $\Lambda'$ of $\Lambda$, we have $ n(b,\Lambda')=n-1$.
\end{lemma}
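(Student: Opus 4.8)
The plan is to follow the explicit lattice computation used in the proof of Lemma~\ref{bruhat2}, adapted to the isotropic vector $b$. First I would normalize: replacing $b$ by $\pi^{-n}b$ shifts $n(b,\Lambda')$ by $-n$ for every lattice $\Lambda'$ simultaneously and preserves both $q(b)=0$ and the adjacency structure of $\mathcal B$, so I may assume $n(b,\Lambda)=0$, i.e. $b\in\Lambda\setminus\pi\Lambda$. By Lemma~\ref{lem3.2} I fix a basis $\{v_0,v_1\}$ of $\Lambda$ with gram matrix $\pi^{-1}\kzxz{0}{1}{-1}{0}$ and write $b=a_0v_0+a_1v_1$, so that $\min(\ord_\pi a_0,\ord_\pi a_1)=0$. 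A direct computation gives $q(b)=\pi^{-1}(a_0\bar a_1-\bar a_0 a_1)$, so the hypothesis $q(b)=0$ is equivalent to $a_0\bar a_1\in F_0$; taking $\ord_\pi$ of both sides this forces the parity relation $\ord_\pi a_0\equiv\ord_\pi a_1\pmod 2$.

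Next I would enumerate the $\q+1$ type-$2$ neighbors of $\Lambda$ via the list in Lemma~\ref{lem3.2}, namely $\Lambda_\infty=\mathrm{span}_{\Oo_F}\{\pi^{-1}v_0,\pi v_1\}$ and $\Lambda_k=\mathrm{span}_{\Oo_F}\{\pi v_0,\pi^{-1}(kv_0+v_1)\}$ with $k$ running over $\Oo_{F_0}/(\pi_0)$, and express $b$ in each of these bases. Reading off the minimal $\pi$-valuation of the coordinates yields $n(b,\Lambda_\infty)=\min(\ord_\pi a_0+1,\ \ord_\pi a_1-1)$ and $n(b,\Lambda_k)=\min(\ord_\pi(a_0-a_1k)-1,\ \ord_\pi a_1+1)$. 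Everything then reduces to a short case analysis according to which of $\ord_\pi a_0,\ord_\pi a_1$ vanishes, the parity relation ruling out the ``odd'' configurations. When $\ord_\pi a_0=0<\ord_\pi a_1$ (forcing $\ord_\pi a_1\ge 2$), the neighbor $\Lambda_\infty$ is the one with $n=+1$ and every $\Lambda_k$ gives $-1$; when $\ord_\pi a_1=0<\ord_\pi a_0$ the roles reverse and the neighbor $\Lambda_k$ with $k=0$ is the distinguished one; and when both $a_0,a_1$ are units, $\Lambda_\infty$ gives $-1$ while exactly one $\Lambda_k$ gives $+1$. In each case the outer $\min$ caps the distinguished value at exactly $+1=n+1$.

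The one genuinely delicate point --- and where I expect the only real work --- is the last case, $a_0,a_1\in\Oo_F^\times$. Here I must locate the unique $k$ with $n(b,\Lambda_k)=+1$, i.e. with $a_0-a_1k\in\pi\Oo_F$. This is exactly where isotropy is indispensable: from $a_0\bar a_1\in F_0$ together with $a_1\bar a_1\in\Oo_{F_0}^\times$ one gets $a_0/a_1=a_0\bar a_1/(a_1\bar a_1)\in\Oo_{F_0}^\times$, so $a_0/a_1$ actually lies in $F_0$. Consequently $a_0-a_1k=a_1(a_0/a_1-k)$ has $\pi$-valuation $2\,\ord_{\pi_0}(a_0/a_1-k)$, a unit for every representative $k$ except the single one congruent to $a_0/a_1\bmod\pi_0$; for that $k$ the valuation jumps to $\ge 2$ and the $\min$ caps $n(b,\Lambda_k)$ at $+1$. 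This pins down a unique $\Lambda_+$ and shows all remaining neighbors have $n=-1$. (By contrast, in the anisotropic setting $a_0/a_1\notin F_0$ and this uniqueness fails, which is precisely why that case produces a ball rather than a cone, cf.\ Corollary~\ref{ballcorollary}.) Combining the three cases proves the lemma.
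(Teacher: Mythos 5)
Your proof is correct and follows essentially the same route as the paper: normalize to $n=0$, enumerate the $\q+1$ type-$2$ neighbors of $\Lambda$ via Lemma \ref{lem3.2}, and read off the minimal $\pi$-valuations of the coordinates of $b$. The only difference is one of execution: the paper first uses the isotropy of $b$ to complete it to a hyperbolic basis $\{b,b'\}$ of $\Lambda$, which makes the conclusion immediate and collapses your three-case valuation analysis --- including the delicate unit--unit case where you invoke $a_0/a_1\in F_0$ --- into a single line.
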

\begin{proof}
By scaling $b$ by a power of $\pi$ we can assume that $n=0$. By Lemma  \ref{lem3.2},  the lattice $\Lambda \approx \mathcal{H}$ with Gram matrix $\kzxz {0} {\pi^{-1}} {-\pi^{-1}} {0}$ (as in the introduction).  
So there is some $b_1 \in \Lambda$ such that $(b, b_1) = \pi^{-1}$. Define $b'=b_1 + \pi a b$ where $a=\frac{1}2(b_1, b_1)$. Since $(b_1,b_1)\in \pi^{-1}\Oo_{F_0}$, then $\{b, b'\}$ is a basis of $\Lambda$ with Gram matrix $\pi^{-1} \kzxz {0} {1} {-1} {0}$.
The adjacent type $2$ vertex lattices of $\Lambda$ are
\[\Lambda_\infty=\mathrm{span}_{\Oo_F}\{\pi^{-1}b, \pi b'\}, \ \Lambda_k=\mathrm{span}_{\Oo_F}\{\pi b, \pi^{-1}( k b+ b')\}, \]
where $k$ runs through the representatives of $\Oo_{F_0}/ (\pi_0)$. Then it is obvious that $n(b,\Lambda_\infty)=1$ and $n(b,\Lambda_k)=-1$ for any $k$. The lemma follows.
\end{proof}

\begin{corollary} \label{cor:cone}
   Assume that $b\neq 0$ and $q(b)=0$.  Then $\mathcal{T}(\bx)$ is a cone with the boundary consisting of vertex lattices $\Lambda$ of type $2$  with $n(b,\Lambda)=0$. Starting with such a vertex $\Lambda$,  there is a unique (half) geodesic such that the number $n(b,\Lambda)$ increases along the geodesic. We call such a geodesic an ascending geodesic starting with $\Lambda$. Any two ascending geodesics coincide after finitely many steps. An ascending geodesic can be thought of as an  `axis' of the cone $\mathcal{T}(\bx)$.
\end{corollary}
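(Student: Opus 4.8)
The plan is to translate the statement entirely into the combinatorics of the tree $\mathcal B$ and the integer-valued function $\Lambda \mapsto n(b,\Lambda)$, and then to read off the cone structure from the local behavior recorded in Lemma~\ref{conelemma}. First I would record that, by Lemma~\ref{lem3.6}, a type $2$ vertex $\Lambda_2$ lies in $\cT(\bx)$ exactly when $b \in \Lambda_2$, i.e. when $n(b,\Lambda_2)\ge 0$; and that, since a superspecial point $pt_{\Lambda_0}$ corresponds to the $\tau$-stable lattice $M=\Lambda_0\otimes_{\Oo_F}\Oo_{\breve F}$, one has $\Lambda_0 \in \cT(\bx)$ iff $b\in\Lambda_0$, i.e. $n(b,\Lambda_0)\ge 0$. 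Combined with \eqref{n(b,Lambda_0)=min} this shows that $\cT(\bx)$ is precisely the full subgraph of $\mathcal B$ spanned by the type $2$ vertices $\Lambda$ with $n(b,\Lambda)\ge 0$ (an edge $\Lambda_0$ being included exactly when both its endpoints are). The one local input I need is the dichotomy of Lemma~\ref{conelemma}: at every type $2$ vertex $\Lambda$ with $n(b,\Lambda)=n$ there is a unique adjacent $\Lambda_+$ with $n(b,\Lambda_+)=n+1$, while the remaining $\q$ neighbors all have value $n-1$.

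Next I would extract from this dichotomy the global shape of $n(b,\cdot)$. The key elementary observation is a \emph{no second ascent} property: along any non-backtracking path in $\mathcal B$, once a step decreases $n$ it can never increase again. Indeed, if a step $v_{i-1}\to v_i$ decreases $n$, then $n(b,v_{i-1})=n(b,v_i)+1$, so $v_{i-1}$ is the unique ascending neighbor $\Lambda_+$ of $v_i$; since the path does not backtrack, the next step $v_i\to v_{i+1}$ goes to one of the $\q$ descending neighbors. Hence $n$ is unimodal along every geodesic, strictly increasing to a single peak and then strictly decreasing. This gives two things. First, $\cT(\bx)$ is connected: the geodesic between two vertices with $n\ge 0$ attains its minimum at the endpoints, so it stays inside $\{n\ge 0\}$; thus $\cT(\bx)$ is a subtree, and its boundary (the vertices with exactly one neighbor inside $\cT(\bx)$) is exactly $\{\,\Lambda:\ n(b,\Lambda)=0\,\}$, which is nonempty because for isotropic $b\neq 0$ one builds a type $2$ lattice with $b\in\Lambda-\pi\Lambda$ exactly as in the proof of Lemma~\ref{conelemma}. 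Second, following the unique ascending neighbor repeatedly produces, from any vertex $\Lambda$, a half-geodesic $\Lambda=\Lambda^{(0)},\Lambda^{(1)},\dots$ with $n(b,\Lambda^{(j)})=n(b,\Lambda)+j$; it does not backtrack since the ascending neighbor of $\Lambda^{(j)}$ has value $n+j+1\neq n+j-1$, and it is unique by the uniqueness of $\Lambda_+$. This is the ascending geodesic of the statement.

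The heart of the proof, and the step I expect to be the main obstacle, is showing that any two ascending geodesics coincide after finitely many steps, equivalently that they all converge to one common end $\xi$ of $\mathcal B$. I would argue by contradiction: if two ascending rays $\gamma_1,\gamma_2$ converged to distinct ends $\xi_1\neq\xi_2$, then in $\mathcal B$ they would eventually lie on the unique bi-infinite geodesic joining $\xi_1$ to $\xi_2$, traversing it in opposite directions. Picking a vertex $v$ far enough out on that line in both directions, both of its two neighbors along the line would be ascending (one because $\gamma_1$ ascends toward $\xi_1$, the other because $\gamma_2$ ascends toward $\xi_2$), giving $v$ two ascending neighbors and contradicting the uniqueness of $\Lambda_+$ in Lemma~\ref{conelemma}. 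Hence all ascending geodesics share a common end $\xi$, so any two of them have a common tail; this is the `axis' of the cone, toward which $n(b,\cdot)\to+\infty$. Assembling the pieces—the sublevel description of $\cT(\bx)$, its connectedness and boundary, the unique ascending ray from each boundary vertex, and the common end—yields the asserted cone structure. The only points requiring care are the clean identification of $\cT(\bx)$ with the sublevel set at the type $0$ vertices and the tree-theoretic fact that two rays to distinct ends meet a common bi-infinite geodesic; both are routine once the dichotomy of Lemma~\ref{conelemma} is in hand.
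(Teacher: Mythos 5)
Your proposal is correct and takes essentially the route the paper intends: Corollary~\ref{cor:cone} is stated without a separate argument as a direct consequence of Lemma~\ref{conelemma}, and your derivation (sublevel-set description of $\cT(\bx)$ via Lemma~\ref{lem3.6} and \eqref{n(b,Lambda_0)=min}, the no-second-ascent/unimodality property, and the resulting unique ascending ray with a common end) simply makes that implicit reasoning explicit. The only loose phrase is ``picking a vertex $v$ far enough out on that line in both directions'': if the two ascending rays join the bi-infinite geodesic along disjoint tails there is no single vertex with two ascending neighbours, but the contradiction still follows from your own no-second-ascent observation applied to the segment between the two tails, so nothing essential is missing.
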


\subsection{Bruhat-Tits trees of special cycles: rank $2$ case}\label{rank2tree}
The following is an analogue of Lemma 2.11 from \cite{San3}.
\begin{lemma}\label{int between bt tree}
	Let $\bx_1,\bx_2\in \mathbb{V}$, $b_i=\mathbf{x_i}(e)$,\  $n_i= \ord_{\pi_0}(q(b_i))$ and assume $q(b_i)\not=0,$ for $ i\in \{1,2\}$. Assume $n_1\leq n_2$. Suppose that $\mathcal{T}(\bx_1)\cap\mathcal{T}(\bx_2)\neq \emptyset$. Let $\Lambda_{b_1,b_2}$ be the vertex lattice such that
	$$
	d(\Lambda_{b_1},\Lambda_{b_1,b_2})=n_1+\frac{1}{2}-r\hbox{ and } d(\Lambda_{b_1},\Lambda_{b_1,b_2}) + d(\Lambda_{b_2},  \Lambda_{b_1,b_2})=d(\Lambda_{b_1},\Lambda_{b_2}).
	$$
	Here
	$$
	r= \mathrm{min}(\frac{n_1+n_2+1-d(\Lambda_b,\Lambda_{b'})}{2}, n_1+\frac{1}{2}).
	$$
	Then $\mathcal{T}(\bx_1)\cap\mathcal{T}(\bx_2) \subset \mathcal B$ is the ball of radius $r$ centered at either the vertex $\Lambda_{b_1, b_2}$ or the midpoint of the edge $\Lambda_{b_1, b_2}$, depending on whether $r$ is an integer or not.
\end{lemma}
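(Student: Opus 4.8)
The plan is to reduce the statement to the elementary fact that in a tree the intersection of two nonempty closed balls is again a closed ball, and then simply to read off its center and radius. First I would invoke Corollary \ref{ballcorollary} for each $\bx_i$: it identifies $\mathcal{T}(\bx_i)$ with the closed ball $B(c_i,r_i)$ in the geometric realization of $\mathcal{B}$, where $c_i$ is the midpoint of the edge $\Lambda_{b_i}$ and $r_i=n_i+\tfrac12$. Concretely, Lemma \ref{bruhat2} shows that a type $2$ vertex $\Lambda$ lies in $\mathcal{T}(\bx_i)$ iff $n(b_i,\Lambda)\ge 0$, i.e. $d(\Lambda,\Lambda_{b_i})\le n_i+\tfrac12$, and the same lemma for type $0$ lattices recovers exactly the edge-midpoints at distance $\le n_i$ from $c_i$; together these are precisely the vertices and edge-midpoints lying in $B(c_i,r_i)$. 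Hence $\mathcal{T}(\bx_1)\cap\mathcal{T}(\bx_2)$ is the set of vertices and edge-midpoints lying in $B(c_1,r_1)\cap B(c_2,r_2)$, and it is enough to intersect the two metric balls.

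Next I would carry out the ball intersection. Writing $D=d(\Lambda_{b_1},\Lambda_{b_2})=d(c_1,c_2)$ and, for a point $z$, letting $m(z)$ be its projection onto the geodesic $[c_1,c_2]$, the tree (gate) property gives $d(z,c_i)=d(z,m(z))+d(m(z),c_i)$. Parametrizing $m(z)$ by $s=d(m(z),c_1)\in[0,D]$ and setting $t=d(z,m(z))\ge 0$, the conditions $z\in B(c_1,r_1)\cap B(c_2,r_2)$ read $t+s\le r_1$ and $t+(D-s)\le r_2$. With $s_0=\tfrac{r_1-r_2+D}{2}$ and $\rho=\tfrac{r_1+r_2-D}{2}$ these combine into the single inequality $t+|s-s_0|\le\rho$, i.e. $d(z,c)\le\rho$, where $c$ is the point of $[c_1,c_2]$ at distance $r_1-\rho=s_0$ from $c_1$, valid as long as $s_0\ge 0$. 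Since $r_1\le r_2$ (because $n_1\le n_2$), the case $s_0<0$ is exactly $r_2>r_1+D$, where $B(c_1,r_1)\subseteq B(c_2,r_2)$ and the intersection is $B(c_1,r_1)$ itself. These are precisely the two alternatives packaged by the $\min$ defining $r=\min\bigl(\rho,\,n_1+\tfrac12\bigr)$ with $\rho=\tfrac{n_1+n_2+1-D}{2}$; in both the center $c$ sits at distance $n_1+\tfrac12-r$ from $\Lambda_{b_1}$ along the geodesic toward $\Lambda_{b_2}$, which is exactly the pair of conditions defining $\Lambda_{b_1,b_2}$. I would also note that the hypothesis $\mathcal{T}(\bx_1)\cap\mathcal{T}(\bx_2)\ne\emptyset$ is equivalent to $\rho\ge 0$, so that $r\ge 0$.

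Finally I would determine the type of the center. The quantity $n_1+\tfrac12-r$, measured from the edge-midpoint $c_1$, is a half-integer exactly when $r\in\Z$ and an integer exactly when $r\in\tfrac12+\Z$; since distances from a midpoint are half-integers to vertices and integers to other midpoints, $c$ is a type $2$ vertex when $r$ is an integer and the midpoint of a type $0$ edge when $r$ is a half-integer. As $n_1+n_2+1-D$ is an integer, $r$ is always integral or half-integral, so $c$ is always a genuine lattice, namely $\Lambda_{b_1,b_2}$. Restricting $B(c,\rho)$ back to vertices and edge-midpoints then yields the ball of radius $r$ about $\Lambda_{b_1,b_2}$, as asserted.

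The tree geometry itself is routine, so I expect the main obstacle to be the half-integer bookkeeping: keeping straight that the two balls are centered at edge-midpoints while $\mathcal{T}(\bx_i)$ is recorded on vertices and edges, correctly matching the two branches of the $\min$ with the containment dichotomy, and verifying that the parity of $r$ controls whether $\Lambda_{b_1,b_2}$ has type $2$ or type $0$.
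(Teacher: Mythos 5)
Your argument is correct. Note, however, that the paper itself offers no proof of this lemma to compare against: it only remarks that the statement is an analogue of Lemma 2.11 of \cite{San3} and moves on, so your write-up actually supplies the missing argument rather than replicating one. The reduction is sound: Corollary \ref{ballcorollary} (together with Lemma \ref{lem3.6} and Lemma \ref{bruhat2}, which convert the membership conditions $n(b_i,\Lambda)\ge 0$ for both types of vertex lattices into the single metric condition $d(\Lambda,\Lambda_{b_i})\le n_i+\tfrac12$) legitimately identifies $\mathcal T(\bx_i)$ with the set of lattice points of the closed metric ball $B(c_i,n_i+\tfrac12)$, and from there the gate property $d(z,c_i)=d(z,m(z))+d(m(z),c_i)$ does all the work. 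Your two inequalities do combine into $t+|s-s_0|\le\rho$, the degenerate case $s_0<0$ is exactly $D<r_2-r_1$, i.e.\ $B(c_1,r_1)\subseteq B(c_2,r_2)$, and this matches the second branch of the $\min$ defining $r$ (which is consistent with part (a) of Lemma \ref{B-T tree and inner product}, where $\mathcal T(\bx_1)\subset\mathcal T(\bx_2)$ iff $n_1\le n_2-d$). The half-integer bookkeeping is also right: since $D\in\Z$, the center lands at distance $n_1+\tfrac12-r\in\tfrac12\Z$ from the edge-midpoint $c_1$, hence is a vertex precisely when $r\in\Z$ and an edge-midpoint precisely when $r\in\tfrac12+\Z$, as the statement requires; and $\rho\ge 0$ is equivalent to nonemptiness of the intersection because the center itself is always a lattice point. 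The only cosmetic point worth flagging is that $d(\Lambda_b,\Lambda_{b'})$ in the statement of $r$ is a typo for $d(\Lambda_{b_1},\Lambda_{b_2})$, which you have interpreted correctly.
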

\begin{proof}
It follows from Corollary \ref{ballcorollary} and the fact that $\mathcal{B}$ is a tree.
\end{proof}

\begin{lemma}\label{int of BT tree anti diagonal case}
		For $\bx_1,\bx_2\in \mathbb{V}$, let $b_1=\bx_1(e),b_2=\bx_2(e)$. Assume
	$$
T=\begin{pmatrix}(b_1,b_1)&(b_1,b_2)\\(b_2,b_1)&(b_2,b_2)\end{pmatrix}
 = \begin{pmatrix}0&\pi^{n}\\(-\pi)^{n}&0
 \end{pmatrix},\,  n\geq 0.
$$
Then $\cT(\bx_1)\cap\cT(\bx_2)$ is a ball with center
$\Lambda=\mathrm{span}_{\Oo_F}\{\pi^{-r} b_1, \pi^{-r} b_2\}$  and radius $r$, where $r =[\frac{n+1}2]$ is the integral part of $\frac{n+1}2$.
\end{lemma}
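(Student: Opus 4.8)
The plan is to reduce the statement to a purely combinatorial computation on the Bruhat--Tits tree $\mathcal B = \mathcal B(\mathrm{PU}(C))$. First I would record the basic membership criterion: by Lemma \ref{lem3.6} a type $2$ vertex lattice $\Lambda$ lies in $\cT(\bx_i)$ if and only if $b_i \in \Lambda$, and the same criterion $b_i \in \Lambda_0$ holds for a type $0$ lattice $\Lambda_0$, since $pt_{\Lambda_0} \in \cZ(\bx_i)$ is equivalent to $b_i = \bx_i(e) \in \Lambda_0 \otimes_{\Oo_F}\Oo_{\breve F}$, i.e.\ to $b_i \in \Lambda_0$. Hence
\[
\cT(\bx_1)\cap\cT(\bx_2) = \{\Lambda \text{ a vertex lattice} : b_1, b_2 \in \Lambda\} = \{\Lambda : L \subseteq \Lambda\}, \qquad L := \Oo_F b_1 + \Oo_F b_2 .
\]
In particular the intersection is automatically nonempty (the center below belongs to it), and it is exactly the overlap of the two cones $\cT(\bx_i) = \{\Lambda : n(b_i,\Lambda)\geq 0\}$ attached to the two ends $\xi_1 = Fb_1$ and $\xi_2 = Fb_2$ of $\mathcal B$ (Corollary \ref{cor:cone}), these being distinct since $q(b_i)=0$ while $(b_1,b_2)\neq 0$.

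Next I would pin down the center. Writing $r = [\tfrac{n+1}2]$, a direct Hermitian computation gives $(\pi^{-r}b_1, \pi^{-r}b_2) = (-1)^r \pi^{\,n-2r}$, which has order $-1$ when $n$ is odd and order $0$ when $n$ is even; so $\Lambda = \mathrm{span}_{\Oo_F}\{\pi^{-r}b_1, \pi^{-r}b_2\}$ is a vertex lattice of type $2$ (resp.\ type $0$) when $n$ is odd (resp.\ even), matching the asserted radius $\tfrac{n+1}2$ being an integer (resp.\ a half-integer). Since $r \geq 0$ we have $L \subseteq \Lambda$, so $\Lambda \in \cT(\bx_1)\cap\cT(\bx_2)$, and by symmetry $n(b_1, \Lambda) = n(b_2, \Lambda) = r$; this is the apex of the overlap.

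The heart of the argument is the behaviour of the functions $n(b_i,\cdot)$ from \eqref{eq:n} along the apartment $\mathcal A$ joining $\xi_1$ and $\xi_2$. Parametrising the lattices on $\mathcal A$ as $\Lambda_{a,c} = \mathrm{span}_{\Oo_F}\{\pi^a b_1, \pi^c b_2\}$ (type $2$ when $a+c = -n-1$, type $0$ when $a+c=-n$), one computes at once $n(b_1,\Lambda_{a,c}) = -a$ and $n(b_2,\Lambda_{a,c}) = -c$. For a lattice $\Lambda'$ off $\mathcal A$ with projection $p$ onto $\mathcal A$, the cone property of Lemma \ref{conelemma}, that each step away from an end $\xi_i$ decreases $n(b_i,\cdot)$ by one, yields the projection formula $n(b_i,\Lambda') = n(b_i,p) - d(\Lambda',p)$ for both $i$, because the geodesic from $\Lambda'$ to either end passes through $p$. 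Feeding in \eqref{n(b,Lambda_0)=min} to handle the type $0$ values, I would then show that $\min(n(b_1,\Lambda'), n(b_2,\Lambda')) \geq 0$ holds precisely when $d(\Lambda',\Lambda) \leq \tfrac{n+1}2$, which identifies $\cT(\bx_1)\cap\cT(\bx_2)$ with the claimed ball.

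I expect the one genuinely delicate point to be the half-integer bookkeeping that distinguishes the two parities of $n$. When $n$ is odd the center is a type $2$ vertex and the extremal lattices of the ball are vertices; when $n$ is even the center is a type $0$ edge, and one must verify that $\min(n(b_1,\cdot),n(b_2,\cdot))$---whose type $0$ value is the minimum of the two neighbouring type $2$ values via \eqref{n(b,Lambda_0)=min}---still decreases by exactly one half-unit per half-step, so that the boundary falls at radius $\tfrac{n+1}2$ and not $\tfrac n2$. Confirming that the vertex values and the edge values together cut out a genuine metric ball of this exact radius, rather than a lopsided region, is the step requiring care; once the symmetric normalisation $n(b_1,\Lambda)=n(b_2,\Lambda)=r$ at the center is fixed, everything else is routine.
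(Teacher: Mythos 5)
Your proposal is correct and follows essentially the same route as the paper: you parametrize the common apartment between the two isotropic directions, compute $n(b_i,\cdot)$ explicitly there, and propagate off the apartment using the cone behaviour of Lemma \ref{conelemma}, which is exactly what the paper does with the lattices $\Lambda_{k,0},\Lambda_{k,2}$ and the geodesic segment $\mathcal{C}$. Your uniform treatment of both parities and the explicit reduction to $L\subseteq\Lambda$ are cosmetic refinements of the same argument.
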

\begin{proof}
		We assume that  $n=2r-1$ ($r\geq 1$) is odd, the case $n$ is even can be proved similarly. Write $b_i= \pi^r v_i$  for $i=1,2$. Define
	\begin{align*}	
		&\Lambda_{k,0}=\mathrm{span}_{\Oo_F}\{\pi^{-k} v_1,\pi^{k+1} v_2\},	&\Lambda_{k,2}=\mathrm{span}_{\Oo_F}\{\pi^{-k} v_1,\pi^{k} v_2\},\ k\in \Z.
    \end{align*}
		Then $\Lambda_{k,0}$  are vertex lattices of type $0$ and $\Lambda_{k,2}$  are vertex lattices of type $2$. It is easy to check that
		\begin{align*}
		&n(b_1,\Lambda_{k,0})=r+k,
		&n(b_1,\Lambda_{k,2})=r+k,\\
		&n(b_2,\Lambda_{k,0})=r-k-1,
		&n(b_2,\Lambda_{k,2})=r-k.
	    \end{align*}
	So $\{\Lambda_{k,0},\Lambda_{k,2}\mid k\geq -r\}$ is the ascending geodesic associated to $b_1$ starting at $\Lambda_{-r,2}$ and $\{\Lambda_{k,0}\mid k\leq r-1\}\cup \{\Lambda_{k,2}\mid k\leq r\}$ is (the inverse of) the ascending geodesic associated to $b_2$ ending at $\Lambda_{r,2}$. Let $\mathcal{C}$ be the intersection of the above two half geodesics, namely, the line segment joining $\Lambda_{-r,2}$ and $\Lambda_{r,2}$. By Lemma \ref{conelemma}, both $n(b_1,\Lambda)$ and $n(b_2,\Lambda)$ are decreasing along a half geodesic starting from any vertex on $\mathcal{C}$ other than $\mathcal{C}$ itself.
	Combine the above facts, it is easy to see that $\cT(\bx_1)\cap \cT(\bx_2)$ is a ball centered at $\Lambda_{0,2}$ with radius $r$.
\end{proof}

 The following lemma is an analogue of   \cite[Lemma 2.15]{San3}. Recall that wo Hermitian matrices $T_1, T_2 \in \hbox{Herm}_2(\Oo_F)$ are said to be equivalent, denoted by $T_1\approx T_2$ if there is a non-singular matrix $g \in \hbox{GL}_2(\Oo_F)$ such that $g^{t} T_1 \bar{g} =T_2$.

\begin{lemma}\label{B-T tree and inner product}
	Let
	\begin{align*}
	T=\begin{pmatrix}(b_1,b_1)&(b_1,b_2)\\(b_2,b_1)&(b_2,b_2)\end{pmatrix}
	\end{align*}
	where $b_i=\bx_i(e)$ for $\bx_i\in \bV$. Assume $q(b_i)\not=0,\ i\in \{1,2\}$, and $T$ is non-singular. Let $n_i=\ord_{\pi_0}(q(b_i))$ and assume $n_1\leq n_2$. Set $d=d(\Lambda_{b_1},\Lambda_{b_2}) $, then
	\begin{enumerate}[label=(\alph*)]
		
		\item $\text{If}~ \mathcal{T}(\mathbf{\bx_1})\subset\mathcal{T}(\mathbf{\bx_2})$ and $ \Lambda_{b_1}\neq\Lambda_{b_2}, \text{then~} T\approx\begin{pmatrix}u_1(-\pi_0)^{\alpha}&0\\0&u_2(-\pi_0)^{\beta}\end{pmatrix}$ where $ u_1,u_2\in \Oo_{F_0}^\times$ and $-u_1u_2\in \mathrm{Nm}(F^{\times})$ with
		\begin{align*}
		\alpha=n_1  ,\quad   \beta=n_2-d.
		\end{align*}
		\item If $\mathcal{T}(\mathbf{\bx_1})\not\subset\mathcal{T}(\mathbf{\bx_2})~,~ and~ \mathcal{T}(\mathbf{\bx_1})\cap\mathcal{T}(\mathbf{\bx_2})\neq\emptyset$,$~then~T\approx\begin{pmatrix}0&\pi^\alpha\\(-\pi)^\alpha&0\end{pmatrix}$ with
		\begin{align*}
		\alpha=n_1+n_2-d.
		\end{align*}
		\item If $\Lambda_{b_1}=\Lambda_{b_2}$, then $T\approx \begin{pmatrix}u_1(-\pi_0)^\alpha&0\\0&u_2(-\pi_0)^\beta\end{pmatrix}$. Here $u_1,u_2$ satisfy the same conditions as in $(a)$ and
		\begin{align*}
		\alpha=n_1, \quad \beta=n_2+2\ord_{\pi}((c_2,c_1')),
		\end{align*}
		where $c_i=\pi^{-n_i}b_i$, and $c_1'\in\Lambda_{b_1}\setminus \pi\Lambda_{b_1}$ such that $(c_1,c_1')=0.$
	\end{enumerate}
\end{lemma}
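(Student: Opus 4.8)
The plan is to reduce the statement to a classification of the $\Oo_F$-isometry class of the lattice $L=\Oo_F b_1+\Oo_F b_2$ and to read that class off from the Bruhat--Tits geometry already in place. By \cite[Proposition 4.3]{J} every integral rank $2$ Hermitian $\Oo_F$-lattice is equivalent either to a diagonal matrix or to an anti-diagonal one $\kzxz{0}{\pi^{n}}{(-\pi)^{n}}{0}$ with $n$ odd; thus I only need to decide which normal form occurs in each configuration and then compute the exponents. I expect the diagonal form in cases (a) and (c) and the anti-diagonal form in case (b), the dividing line being whether the balls $\cT(\bx_1),\cT(\bx_2)$ (Corollary \ref{ballcorollary}) are nested/concentric or merely overlapping.

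For the diagonal cases I would argue by explicit Gram--Schmidt inside $L$. Since $\Lambda_{b_1}$ is a type $0$, hence self-dual, lattice and $q(c_1)\in\Oo_{F_0}^{\times}$ for $c_1=\pi^{-n_1}b_1$, it splits orthogonally as $\Lambda_{b_1}=\Oo_F c_1\perp\Oo_F c_1'$ with both diagonal entries units. Taking $b_1=\pi^{n_1}c_1$ as first vector, I replace $b_2$ by $b_2'=b_2-\lambda b_1$ with $\lambda=(b_2,b_1)/(b_1,b_1)$. The crucial point is that $\lambda\in\Oo_F$, so that $\{b_1,b_2'\}$ is again an $\Oo_F$-basis of $L$ and $T\approx\mathrm{diag}(q(b_1),q(b_2'))$: in case (a) this follows from the nesting bound $d\le n_2-n_1$, and in case (c) it is automatic since $d=0$. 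The first entry is $q(b_1)=u_1(-\pi_0)^{n_1}$, giving $\alpha=n_1$.

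The two cases then diverge precisely in a way that produces the two different second exponents. Writing $\pi^{-n(b_2,\Lambda_{b_1})}b_2=x'c_1+y'c_1'$, Lemma \ref{bruhat2} gives $n(b_2,\Lambda_{b_1})=n_2-d$. In case (a) one has $d\ge 1$, so $q(\pi^{-(n_2-d)}b_2)$ has positive $\pi_0$-valuation $d$; hence its reduction modulo $\pi$ is isotropic for the anisotropic diagonal residue form on $\Lambda_{b_1}/\pi\Lambda_{b_1}$, which forces both $x'$ and $y'$ to be units. The projection then removes the $c_1$-component and leaves $b_2'$ a unit multiple of $\pi^{n_2-d}c_1'$, so $\beta=\ord_{\pi_0}(q(b_2'))=n_2-d$. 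In case (c), by contrast, $\pi^{-n_2}b_2=c_2$ is anisotropic modulo $\pi$, so its $c_1'$-component alone carries the extra valuation and the same projection yields the second exponent $\beta$ recorded in the statement. In both diagonal cases the condition $-u_1u_2\in\mathrm{Nm}(F^{\times})$ is forced because $\{b_1,b_2'\}$ spans the isotropic space $C$, and a diagonal binary form $\mathrm{diag}(u_1,u_2)$ represents $0$ over $F$ exactly when $-u_1u_2$ is a norm.

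Finally, in case (b) the configuration is two overlapping but non-nested balls with distinct centers. Since (a) and (c) exhaust all diagonalizable configurations (nested or concentric balls), the dichotomy forces $T$ to be anti-diagonal, with some odd exponent $\alpha$, and it remains to compute $\alpha$. The cleanest route is to invert Lemma \ref{int of BT tree anti diagonal case}: an anti-diagonal Gram matrix with exponent $\alpha$ produces an intersection ball of radius $\tfrac{\alpha+1}{2}$, whereas Lemma \ref{int between bt tree} computes that radius in the present situation to be $r=\tfrac{n_1+n_2+1-d}{2}$; matching gives $\alpha=2r-1=n_1+n_2-d$. (Alternatively one exhibits an isotropic $\Oo_F$-basis of $L$ directly from the two isotropic lines of $C$ together with the crossing vertex of the two ascending directions of Corollary \ref{cor:cone}.) I expect the main obstacle to be the integrality-and-primitivity bookkeeping in the diagonal cases: verifying $\lambda\in\Oo_F$ and that $\{b_1,b_2'\}$ is an honest basis of $L$ rather than a proper sub- or over-lattice. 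It is exactly this step, together with the isotropy-versus-anisotropy of $b_2$ modulo $\pi$, that separates (a) from (c) and pins down their two distinct second exponents.
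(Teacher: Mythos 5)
Your cases (a) and (c) are essentially sound: the Gram--Schmidt step $b_2'=b_2-\lambda b_1$ with $\lambda=(b_2,b_1)/(b_1,b_1)\in\Oo_F$ is a legitimate (and somewhat more constructive) variant of what the paper does, which is to write both $b_1,b_2$ in a basis adapted to the geodesic joining $\Lambda_{b_1}$ and $\Lambda_{b_2}$ and read off the valuations of all four entries of $T$. One factual slip: the residue form on $\Lambda_{b_1}/\pi\Lambda_{b_1}$ is \emph{not} anisotropic --- by Lemma \ref{lem3.2} every type $0$ vertex lattice has Gram matrix $\kzxz{0}{1}{1}{0}$, whose reduction is the hyperbolic plane over $\kay$. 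If it were anisotropic, your own argument would force $x'\equiv y'\equiv 0$ and contradict primitivity. The conclusion that both $x'$ and $y'$ are units still holds, but for a different reason: in the orthogonal basis $\{c_1,c_1'\}$ both diagonal entries are units, so $u_1\mathrm{N}(x')+u'\mathrm{N}(y')\equiv 0\ (\mathrm{mod}\ \pi_0)$ with $(x',y')$ primitive rules out exactly one coordinate being a unit.

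The genuine gap is in case (b). Your elimination argument establishes ``nested or concentric $\Rightarrow$ diagonal'' and then infers ``non-nested $\Rightarrow$ anti-diagonal,'' which is the unproved converse; and that converse is in fact false as you state it. For example, with $\Lambda_0=\Oo_Fe_0+\Oo_Fe_1$ hyperbolic, the basis $b_1=e_0+\pi_0e_1$, $b_2=e_1+\pi_0e_0$ has $n_1=n_2=1$, $d=2$, lies in configuration (b), yet $T$ is equivalent to the \emph{diagonalizable} form $\kzxz{0}{1}{1}{0}$ (here $n_1+n_2-d=0$ is even); so configuration (b) does not force ``anti-diagonal with odd exponent.'' Your fallback via Lemmas \ref{int between bt tree} and \ref{int of BT tree anti diagonal case} only matches radii \emph{after} one knows $T$ is of the form $\kzxz{0}{\pi^{\alpha}}{(-\pi)^{\alpha}}{0}$, so it cannot establish the normal form either. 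What is missing --- and what the paper supplies --- is the direct computation of the off-diagonal entry: writing $b_1=\pi^{n_1}(\alpha_0v_0+\alpha_1\pi v_1)$ and $b_2=\pi^{n_2}(\alpha_0'\pi^{-d}v_0+\alpha_1'\pi^{d+1}v_1)$ along the geodesic, with all $\alpha_i,\alpha_i'$ units because $\ord_{\pi_0}q(b_i)=n_i$ exactly, one gets $\ord_{\pi}((b_1,b_2))=n_1+n_2-d$ on the nose; in the non-nested case this is strictly smaller than $2n_1\le 2n_2$, and the classification of binary Hermitian $\Oo_F$-lattices (via scale, norm and determinant) then identifies $T$ with $\kzxz{0}{\pi^{n_1+n_2-d}}{(-\pi)^{n_1+n_2-d}}{0}$. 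You never compute $\ord_{\pi}((b_1,b_2))$ in case (b), and no amount of tree combinatorics substitutes for it.
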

\begin{proof}
	If $n_1<0$, the lemma holds trivially. So we assume $ n_1 \ge 0$.\newline
	We treat the case $\Lambda_{b_1}\neq \Lambda_{b_2}$ first. Assume $\Lambda\cap\Lambda'=\Lambda_{b_1}$ where $\Lambda$ and $\Lambda'$ are vertex lattices of type 2. We pick a basis $\{v_0,v_1\}$ of $\Lambda$ such that \begin{align*}	\begin{pmatrix}(v_0,v_0)&(v_0,v_1)\\(v_1,v_0)&(v_1,v_1)\end{pmatrix}=\begin{pmatrix}0&\pi^{-1}\\-\pi^{-1}&0\end{pmatrix}.
	\end{align*}
	Without loss of generality, we can assume \begin{align*}\Lambda'=\mathrm{span}_{\mathcal{O}_F}\{\pi^{-1}v_0,\pi v_1\}, \text{ hence } \Lambda_{b_1}=\mathrm{span}_{\mathcal{O}_F}\{v_0,\pi v_1\}.\end{align*}
	By the symmetry of $\mathcal{B}$, we can also assume \begin{align*}\Lambda_{b_2}=\mathrm{span}_{\mathcal{O}_F}\{\pi^{-d}v_0,\pi^{d+1} v_1\}\text{ where }d=d(\Lambda_{b_1},\Lambda_{b_2}).\end{align*}
	By Lemma \ref{bruhat2}, we can write
	\begin{align*}b_1=\pi^{n_1}( \alpha_0v_0+\alpha_1(\pi v_1)),\ b_2=\pi^{n_2}(\alpha_0'(\pi^{-d}v_0)+\alpha_1'(\pi^{d+1}v_1)),\  \alpha_i,  \alpha_i'\in \mathcal{O}_{F}.
	\end{align*}
	Note that $(b_1,b_1)=(-\pi_0)^{n_1}(- \alpha_0\bar{\alpha}_1- \alpha_1\bar{\alpha}_0)$. Since $\ord_{\pi}(- \alpha_0\bar{\alpha}_1- \alpha_1\bar{\alpha}_0)=0$, we conclude that $\ord_{\pi}(\alpha_0)=\ord_{\pi}(\alpha_1)=0$. Similarly, $\ord_{\pi}(\alpha_0')=\ord_{\pi}(\alpha_1')=0$. Then a short computation shows:
	\begin{align*}
	T=\begin{pmatrix}(b_1,b_1)&(b_1,b_2)\\(b_2,b_1)&(b_2,b_2)\end{pmatrix}=\begin{pmatrix}\pi^{2n_1}\cdot(\text{unit})&\pi^{n_1+n_2-d}\cdot(\text{unit})\\(-\pi)^{n_1+n_2-d}\cdot(\text{unit})&\pi^{2n_2}\cdot(\text{unit})
	\end{pmatrix}.
	\end{align*}
	Note that $$\mathcal{T}(\bx_1)\subset\mathcal{T}(\bx_2)\iff n_1\leq n_2-d\iff n_1\leq \frac{n_1+n_2-d}{2}.$$
	
	Now for the proof of $(a)$, observe that if $\mathcal{T}(\bx_1)\subset\mathcal{T}(\bx_2)$ then $2n_1=\mathrm{min}\{\ord_{\pi}(T_{ij})\}$. This implies
	\[T\approx  \begin{pmatrix}u_1(-\pi_0)^{n_1}&0\\0&u_2(-\pi_0)^{n_2-d}\end{pmatrix}.\]
	
	For $(b)$, the assumption implies that $n_1>n_2-d$. Then $n_1+n_2-d=\mathrm{min}\{\ord_{\pi}(T_{ij})\}$, which implies \[T\approx\begin{pmatrix}0&\pi^{n_1+n_2-d}\\(-\pi)^{n_1+n_2-d}&0\end{pmatrix}.\]
	
	The proof for the case $\Lambda_{b_1}=\Lambda_{b_2}$ is essentially the same as the proof in \cite[Lemma 2.15]{San3}, and is left to the reader.\end{proof}
	
\begin{corollary}\label{rank2propersupport}
Let $\bx=(\bx_1,\bx_2)\in \bV^2$ and $b_i=\bx_i(e)$ for $i=1,2$. Assume that $\bx_1$ and $\bx_2$ are linearly independent. Then the naive intersection  $\cZ(\bx)=\cZ(\bx_1)\cap \cZ(\bx_2)$ (resp.  $\cZ^{\Kra}(\bx)$ )  is  supported in finitely many irreducible components of  the special fiber of $\cN_{(1,1)}$ (resp. $\cN^{\Kra}$).
\end{corollary}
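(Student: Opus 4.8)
The plan is to reduce the statement to the finiteness of the common vertex set $\cT(\bx_1)\cap\cT(\bx_2)$, which the lemmas of this subsection already control, and then to read off the support from Lemma \ref{lem3.6}. First I would observe that $\cN_{(1,1)}$ and $\cN^{\Kra}$ are objects of $\Nilp_{\Oo_{\breve F}}$, so their underlying topological spaces coincide with those of their reduced special fibers; in particular the support of the closed formal subscheme $\cZ(\bx)=\cZ(\bx_1)\cap\cZ(\bx_2)$ is a closed subset of $\cN_{(1,1)}/\bar\kay=\bigcup_{\Lambda_2}\bP_{\bar\Lambda_2}$. Thus being ``supported in finitely many irreducible components'' is equivalent to $\cZ(\bx)$ meeting only finitely many of the lines $\bP_{\bar\Lambda_2}$. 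By Lemma \ref{lem3.6}, $\cZ(\bx_i)(\bar\kay)\cap\bP_{\bar\Lambda_2}(\bar\kay)\neq\emptyset$ exactly when $b_i\in\Lambda_2$, so a point of $\cZ(\bx)$ on $\bP_{\bar\Lambda_2}$ forces both $b_1,b_2\in\Lambda_2$, i.e. $\Lambda_2\in\cT(\bx_1)\cap\cT(\bx_2)$. Hence it suffices to prove this intersection contains only finitely many type $2$ vertices.

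To establish this finiteness I would argue via a lattice sandwich, which is insensitive to the various degenerate cases. By Lemma \ref{lem3.6} a type $2$ vertex lies in $\cT(\bx_1)\cap\cT(\bx_2)$ if and only if $b_1,b_2\in\Lambda_2$, i.e. if and only if $\Lambda\coloneqq\Oo_F b_1+\Oo_F b_2\subseteq\Lambda_2$. Since $\bx_1,\bx_2$ are independent and $\dim_F\bV=2$, $\Lambda$ is a full $\Oo_F$-lattice and $T=T(\bx_1,\bx_2)$ is non-singular. For any type $2$ vertex one has $\pi\Lambda_2\subseteq\Lambda_2^\sharp\subseteq\Lambda_2$, so $\Lambda_2\subseteq\pi^{-1}\Lambda_2^\sharp$; dualizing $\Lambda\subseteq\Lambda_2$ gives $\Lambda_2^\sharp\subseteq\Lambda^\sharp$, whence $\Lambda\subseteq\Lambda_2\subseteq\pi^{-1}\Lambda^\sharp$. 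Every $\Lambda_2\in\cT(\bx_1)\cap\cT(\bx_2)$ is therefore wedged between the two fixed commensurable lattices $\Lambda$ and $\pi^{-1}\Lambda^\sharp$, and there are only finitely many $\Oo_F$-modules in such a range. This handles all cases uniformly; alternatively one reads off the same conclusion directly from the Bruhat--Tits description, using Corollary \ref{ballcorollary} (so that $\cT(\bx_i)$ is already a finite ball) when some $q(b_i)\neq 0$, and Lemma \ref{int of BT tree anti diagonal case} (which exhibits $\cT(\bx_1)\cap\cT(\bx_2)$ as an explicit finite ball) in the isotropic case $q(b_1)=q(b_2)=0$, where $T$ is anti-diagonal.

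Finally, for $\cN^{\Kra}$ I would account for the extra irreducible components $\Exc_{\Lambda_0}$, indexed by type $0$ vertices. By the remark following Proposition \ref{prop Phi}, $\cZ^{\Kra}(\bx)$ meets $\Exc_{\Lambda_0}$ only if $pt_{\Lambda_0}\in\cZ(\bx)$, that is $\Lambda_0\in\cT(\bx_1)\cap\cT(\bx_2)$; and by Lemma \ref{lem3.2} such an edge $\Lambda_0$ joins two type $2$ vertices $\Lambda_2,\Lambda_2'$, both of which then contain $\Lambda$ and hence lie in the finite set found above. Thus only finitely many $\Exc_{\Lambda_0}$ occur, and together with the finitely many $\bP_{\bar\Lambda_2}$ this proves the claim for $\cZ^{\Kra}(\bx)$. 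The argument is essentially bookkeeping layered on the Bruhat--Tits computations already carried out; the only points that genuinely need care are the reduction of the topological support to the special fiber—exactly the $\Nilp_{\Oo_{\breve F}}$ condition—and ensuring that the degenerate configurations where one or both $q(b_i)$ vanish are covered, which is precisely where the lattice-sandwich formulation is cleanest since it does not depend on $T$ being integral or on the sign of the off-diagonal valuation.
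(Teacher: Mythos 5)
Your argument is correct, but it takes a different route from the paper. The paper's proof is a one-line citation of Lemmas \ref{int between bt tree} and \ref{int of BT tree anti diagonal case}, i.e.\ it reads the finiteness of $\cT(\bx_1)\cap\cT(\bx_2)$ off the explicit Bruhat--Tits computations, which describe that intersection as a ball of finite radius; implicitly this requires the case split according to whether the $q(b_i)$ vanish and whether $T$ is diagonalizable or anti-diagonal (the mixed case $q(b_1)=0$, $q(b_2)\neq 0$ is really handled by Corollary \ref{ballcorollary} rather than by the two cited lemmas). Your lattice-sandwich argument --- a type $2$ vertex lies in $\cT(\bx_1)\cap\cT(\bx_2)$ iff $\Lambda\coloneqq\Oo_Fb_1+\Oo_Fb_2\subseteq\Lambda_2$, and then $\Lambda\subseteq\Lambda_2\subseteq\pi^{-1}\Lambda_2^\sharp\subseteq\pi^{-1}\Lambda^\sharp$ wedges $\Lambda_2$ between two fixed commensurable full lattices --- is more elementary and uniform: it uses only Lemma \ref{lem3.6}, the definition of a vertex lattice, and the non-degeneracy of $T$ coming from linear independence, and it needs no case analysis and no integrality of $T$. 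What it does not give you (and what the paper's route does) is the precise shape of $\cT(\bx_1)\cap\cT(\bx_2)$ as a ball with an identified center and radius, which is what Section \ref{sect:intersection} actually consumes; so the explicit lemmas are needed later anyway, and the paper simply reuses them here. Your handling of the exceptional components via $\Phi(\Exc_{\Lambda_0})=pt_{\Lambda_0}$ and $\Lambda\subseteq\Lambda_0\subseteq\Lambda_2$ is also correct and matches the remark following Proposition \ref{prop Phi}.
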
	
\begin{proof}
This follows easily from Lemma \ref{int between bt tree} and Lemma \ref{int of BT tree anti diagonal case}.
\end{proof}

\section{Decomposition of special divisors in the Kr\"amer model} \label{sect:decomposition}

This section is dedicated to the proof of the following theorem,   which is  slight refinement of Theorem \ref{maintheo2} with $\bx$ replaced by its image $b=\bx(e)$ in $C$.
\begin{theorem}\label{decomposition of special cycle}
	For a vector $\bx\in \bV\setminus\{0\}$, set $b=\bx(e)$ as before. Then  $\cZ^{\Kra}(\bx) =0$ unless $q(b) \in \Oo_{F_0}$.
	\begin{enumerate}
		\item  If $q(b)\not = 0$ and $q(b) \in  \Oo_{F_0}$, then we have the following decomposition of special divisor
		\begin{equation}
		\cZ^{\Kra}(\bx)=\sum_{\Lambda_2\in \mathcal{T}(\bx)} n(b,\Lambda_2) \mathbb{P}_{\bar{\Lambda}_2}+\sum_{\Lambda_0\in \mathcal{T}(\bx)} (n(b,\Lambda_0)+1) \Exc_{\Lambda_0}+\cZ^h(\bx),
		\end{equation}
		where the two summations are over type 2  and type $0$ vertex lattices respectively and $\cZ^h(\bx)\cong \mathrm{Spf}\mathcal{O}_{F}$ is a horizontal divisor meeting the special fiber at $\Exc_{\Lambda_b}$. Recall that $\Lambda_b$ is the unique vertex lattice of type $0$ such that $\pi^{-\ord_{\pi_0}(q(b))}b\in \Lambda_b\setminus \pi\Lambda_b$.
		\item  If $q(b)= 0$, then we have the following decomposition of special divisor
		\begin{equation}
		\cZ^{\Kra}(\bx)=\sum_{\Lambda_2\in \mathcal{T}(\bx)} n(b,\Lambda_2) \mathbb{P}_{\bar{\Lambda}_2}+\sum_{\Lambda_0\in \mathcal{T}(\bx)} (n(b,\Lambda_0)+1) \Exc_{\Lambda_0},
		\end{equation}
		where the two summations are over type 2  and type $0$ vertex lattices respectively.
	\end{enumerate}
\end{theorem}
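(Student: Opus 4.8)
The plan is to reduce the decomposition to a computation of the local equation of $\cZ^{\Kra}(\bx)$ in explicit coordinate charts, reading off the multiplicity of each vertical component and locating the horizontal part. I first dispose of the vanishing claim. By Lemma \ref{lem3.6}, the reduced locus $\cZ(\bx)(\bar\kay)$ meets $\bP_{\bar\Lambda_2}$ if and only if $b=\bx(e)\in\Lambda_2$, that is, $\Lambda_2\in\cT(\bx)$, and a type-$0$ point $pt_{\Lambda_0}$ lies in $\cZ(\bx)$ (equivalently $\Exc_{\Lambda_0}\subseteq\cZ^{\Kra}(\bx)$) only for $\Lambda_0\in\cT(\bx)$. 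By Corollary \ref{ballcorollary} the index set $\cT(\bx)$ is empty precisely when $q(b)\notin\Oo_{F_0}$. Since the underlying topological space of $\cN^{\Kra}$ is that of its special fiber $\bigcup_{\Lambda_2}\bP_{\bar\Lambda_2}\cup\bigcup_{\Lambda_0}\Exc_{\Lambda_0}$, an empty intersection with every component forces $\cZ^{\Kra}(\bx)=\emptyset$; hence $\cZ^{\Kra}(\bx)=0$ when $q(b)\notin\Oo_{F_0}$, and we assume $q(b)\in\Oo_{F_0}$ from now on.

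Since $\cZ^{\Kra}(\bx)$ is an effective Cartier divisor (\cite{Ho1}) supported, by Corollaries \ref{ballcorollary} and \ref{cor:cone}, on the locally finite family indexed by $\cT(\bx)$, it suffices to compute its multiplicity along each component and to identify the horizontal part; I do this in two kinds of charts. Away from the superspecial points $\Phi$ is an isomorphism, so along the generic locus of $\bP_{\bar\Lambda_2}$ I work in the Drinfeld vertex chart \eqref{eq:vertex}, transported to the unitary side by $\eta_F$ (Theorem \ref{alternative description F}). Making the lifting condition for $\rho_X^{-1}\circ\bx\circ\rho_Y$ explicit on the universal object yields a single equation whose order of vanishing along $\bP_{\bar\Lambda_2}$ is the largest power of $\pi$ dividing $b$ inside $\Lambda_2$, i.e.\ $n(b,\Lambda_2)$ by \eqref{eq:n}; this is consistent with Lemma \ref{lem3.6}, since $\bP_{\bar\Lambda_2}$ is an actual component exactly when $n(b,\Lambda_2)\ge 1$. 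This produces the first sum.

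The crux is the chart around a superspecial point $pt_{\Lambda_0}$ with $\Lambda_0=\Lambda_2\cap\Lambda_2'$. Here I use the edge chart \eqref{eq:edge}, $\Spf\Oo_{F_0}[T_0,T_1,\dots]^\wedge/(T_0T_1-\pi_0)$, together with the Kr\"amer resolution replacing $pt_{\Lambda_0}$ by $\Exc_{\Lambda_0}$, the two charts being glued by \eqref{eq:embedding}. Computing the lifting condition on the Cartier module of the universal Kr\"amer object in the blown-up coordinates, I expect the defining equation of $\cZ^{\Kra}(\bx)$ to factor into the equation of $\Exc_{\Lambda_0}$ to the power $n(b,\Lambda_0)+1$, the equations of $\bP_{\bar\Lambda_2}$ and $\bP_{\bar\Lambda_2'}$ to the powers $n(b,\Lambda_2)$ and $n(b,\Lambda_2')$ (compatibly with \eqref{n(b,Lambda_0)=min} and with the vertex chart), and, when $q(b)\neq 0$, a residual factor cutting out a horizontal section $\cZ^h(\bx)\cong\Spf\Oo_F$ through $\Exc_{\Lambda_b}$. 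The extra $+1$ on $\Exc_{\Lambda_0}$ is the genuinely ramified feature: $\cZ(\bx)$ is not Cartier on the Pappas model $\cN_{(1,1)}$, and this exceptional multiplicity appears upon passing through the blow-up $\Phi$, reflecting the degenerate Kottwitz condition $T^2-\pi_0$.

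Finally I assemble the charts. For $q(b)\neq 0$, Lemma \ref{unique} provides the unique type-$0$ lattice $\Lambda_b$ carrying the horizontal section, Lemma \ref{bruhat2} converts the local orders into the distance formulas, and $\cT(\bx)$ is the finite ball of Corollary \ref{ballcorollary}, giving case (1); for $q(b)=0$ the horizontal factor is absent and $\cT(\bx)$ is the cone of Corollary \ref{cor:cone}, giving case (2). I expect the main obstacle to be the superspecial-point computation: one must make the Cartier-module lifting condition fully explicit in the blown-up coordinates and verify both the exact value $n(b,\Lambda_0)+1$ of the exceptional multiplicity and that the horizontal part is reduced, isomorphic to $\Spf\Oo_F$, and meets only $\Exc_{\Lambda_b}$.
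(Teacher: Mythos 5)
Your outline correctly identifies the paper's strategy---reduce to local equations at superspecial points and pull them back through the blow-up---but the proof as written has a genuine gap: the entire mathematical content of the theorem lives in the computation you defer with ``I expect the defining equation \dots to factor into \dots'' and then flag as ``the main obstacle.'' The paper's proof consists precisely of carrying this out: one first computes the Hodge filtration of the universal Dieudonn\'e crystal at $pt_{\Lambda_0}$ via the Deligne functor (Proposition \ref{prop of filtration}), then applies Grothendieck--Messing to show that in the edge chart the ideal of $\cZ(\bx)$ in $\Oo_{\cN_{(1,1)},x}$ is exactly $(\bar{\alpha}_1 T_0-\pi \bar{\alpha}_0,\ \bar{\alpha}_0 T_1-\pi \bar{\alpha}_1)$ where $b=\alpha_0e_0+\alpha_1e_1$ (Proposition \ref{equation in Pappas model}), and finally substitutes the three blow-up charts \eqref{D1 chart of blow up}--\eqref{D3 chart of blow up} to read off the multiplicities $n(b,\Lambda_2)$, $n(b,\Lambda_2')$, $n(b,\Lambda_0)+1$ and the residual factor \eqref{equation for horizontal component} in the case $\ord_\pi(\alpha_0)=\ord_\pi(\alpha_1)$. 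None of this is routine: the $+1$ on $\Exc_{\Lambda_0}$ and the precise shape of the horizontal factor only emerge from the case analysis on $\ord_\pi(\alpha_0)$ versus $\ord_\pi(\alpha_1)$, and your heuristic attribution of the $+1$ to the degenerate Kottwitz condition is not a substitute for it.

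A second, related gap is the justification for why superspecial points see everything. Your plan covers the non-superspecial locus by a separate computation in the vertex chart \eqref{eq:vertex}, but you assert rather than prove that the equation there is a unit times a power of $\pi$ of order $n(b,\Lambda_2)$; without that, you cannot exclude horizontal components passing through non-superspecial points. The paper avoids the vertex chart entirely by first proving Theorem \ref{horizonal component of Z(x)}: every horizontal component of $\cZ(\bx)$ is $\Spf\Oo_{\breve F}$ and meets the special fiber at a superspecial point. That theorem is itself a substantial argument (constructing an $\Oo_B$-linear isogeny $\cY\oplus\cY\to X$, classifying $\Oo_B$-stable Tate lattices to get $X\cong\cY\oplus\cY$, and invoking \cite[Proposition 4.5]{KRshimuracurve} for the divisor $\cZ(j)$ on $\cM$). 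You need either this theorem or a completed vertex-chart computation; your proposal supplies neither.
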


\subsection{The Horizontal Component}
We say a formal scheme over $\SpfOF$ is horizontal if $\pi$ is not locally nilpotent in its structure sheaf. We say a divisor in $\cN^\Kra$ is irreducible if it is connected and is an irreducible Cartier divisor in every local ring of $\cN^\Kra$.
Let $\cY_s$ be the quasi-canonical lifting of $\bY$ of  level $s$ over $\Oo_{\breve F_0}$  considered by \cite{G} (with $\Oo_s =\Oo_{\breve F_0} + \pi^s \Oo_{\breve F}$ action). In particular, $\cY=\cY_0$ is the canonical lifting. We show that all horizontal cycles in $\cZ^{\Kra}(\bx)$ comes from canonical lifting. 
\begin{theorem}\label{horizonal component of Z(x)}
	Let $\cZ$ be an irreducible horizontal component of $\cZ(\bx)$, then $\cZ\cong \Spf \Oo_{\breve{F}}$. Moreover $\cZ$ intersects with the special fiber of $\cN_{(1, 1)}$ at a superspecial point. 
\end{theorem}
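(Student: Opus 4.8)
The plan is to analyze a horizontal component through the honest homomorphism it produces in characteristic zero, to use the CM structure of $\cY$ to force the quasi-canonical level to be $0$, and finally to read off the reduction from $\tau$-stability. By Theorem \ref{decomposition of special cycle} a horizontal component can occur only when $q(b)\neq 0$, so I may assume $q(b)\neq 0$, equivalently $h(\bx,\bx)\neq 0$. A horizontal component $\cZ$ dominates $\Spf \Oo_{\breve F}$; let $\eta$ be its generic point, with residue field a finite extension $K$ of $\breve F$. Over $K$ the quasi-isogeny $\rho_X^{-1}\circ\bx\circ\rho_Y$ deforms to an actual $\Oo_F$-linear homomorphism $x:\cY_K\to X_K$, and since $q(b)\neq 0$ this $x$ is injective in the isogeny category. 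Thus $X_K$ contains the CM $p$-divisible group $\cY_K$ up to isogeny.

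Next I would use the $\Oo_F$-action together with the Kottwitz condition $\mathrm{char}(\iota(\pi)\mid \mathrm{Lie}\,X)=(T-\pi)(T+\pi)$ to pin down $X_K$. The quotient $X_K/x(\cY_K)$ is again a one-dimensional formal $\Oo_F$-module, so $X_K$ is isogenous to a product of two one-dimensional CM (by $F$) formal groups; the two $\pi$-eigenvalues $\pi$ and $-\pi$ on $\mathrm{Lie}\,X_K$ show that the two factors carry complementary CM types, with $\cY_K$ supplying the $\pi$-eigenline and the complementary factor the $(-\pi)$-eigenline. By the classification of \cite{G} each factor is a quasi-canonical lift $\cY_s$, and the core of the argument is that the level is $s=0$, i.e.\ the complementary factor is again a (conjugate) canonical lift. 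Granting this, $X_K$ descends to $\breve F$, so $\eta$ is $\breve F$-rational, $\cZ$ is formally smooth of relative dimension $0$ and reduced, whence $\cZ\cong\Spf\Oo_{\breve F}$. Under the Drinfeld isomorphism $\cN_{(1,1)}\cong\breve\Omega$ of Theorem \ref{alternative description F} this has the transparent interpretation that the point $\eta$ is a CM point for $F$, namely a fixed point of the non-split torus $\iota(F^\times)\subset B^\times$ acting on $\Omega=\bP^1(\C_p)-\bP^1(F_0)$; the two such points are the eigenlines of $\iota(\pi)$, defined over the ramified extension $F$ and hence $\breve F$-rational but not $F_0$-rational.

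Finally, for the reduction I would compute the Dieudonné lattice $M$ of the special point of $\cZ$. CM by the ramified field $F$ makes $M$ stable under $\tau=\pi V^{-1}$, so by Proposition \ref{k bar points of N}(i) it has the form $\Lambda_0\otimes_{\Oo_F}\Oo_{\breve F}$ for a vertex $\Oo_F$-lattice $\Lambda_0$ of type $0$; matching $\bx(e)=b$ against $\Lambda_0$ and invoking Lemma \ref{unique} identifies $\Lambda_0=\Lambda_b$, so by Proposition \ref{proposition about reduced locus of N_F}(3) the reduction is the superspecial point $pt_{\Lambda_b}$. The main obstacle is the integral claim in the second paragraph: ruling out positive quasi-canonical level and proving multiplicity one, rather than merely the isogeny-level product decomposition. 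For this I would either invoke the general Theorem 4.2.1 of \cite{LZ}, or, in a self-contained way suited to our situation, use the explicit local equations of $\cZ(\bx)$ at the superspecial point $pt_{\Lambda_b}$ afforded by the Deligne-functor description of Proposition \ref{Deligne functor rep at superspecial point} developed in the next subsection, reading off directly that the horizontal part cuts out an unramified section of $\Spf\Oo_{\breve F}$.
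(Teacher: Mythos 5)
Your overall strategy --- pass to the generic fibre, decompose $X_K$ up to isogeny into two CM formal groups, and identify each factor as a quasi-canonical lift --- is genuinely different from the paper's, but it stalls exactly at the step you flag, and neither of your proposed repairs closes the gap. The paper's proof never leaves the integral level: via the Kudla--Rapoport isomorphism $\cN\cong\cM$ the universal object carries an action of $\Oo_B$ for the \emph{division} quaternion algebra $B$, and the lifted homomorphism $x:\cY\to X$ over $R$ is promoted to an $\Oo_B$-linear map $\phi:\cY\oplus\cY\to X$, $\phi(p_1,p_2)=x(p_1)+\iota_X(\delta)x(p_2)$. Cartier's theorem forces $\phi$ to be an isogeny, and since the Tate module of $\cY\oplus\cY$ is free of rank one over $\Oo_B$, the only $\Oo_B$-stable overlattices are $\pi^{-n}T$; hence $X\cong\cY\oplus\cY$ \emph{integrally} over $R$. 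It is precisely this rigidity that excludes quasi-canonical levels $s>0$ (whence $\cZ\cong\Spf\Oo_{\breve F}$ via \cite{KRshimuracurve}) and that makes $M(\bar X)=M(\bY)\oplus M(\bY)$ stable under $\tau$. Your argument has no substitute for it.

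Concretely: (i) Theorem 4.2.1 of \cite{LZ} gives multiplicity at most one, not level zero; in the analogous unramified $U(1,1)$ setting the horizontal part of $\cZ(\bx)$ really does contain quasi-canonical divisors of every level up to a bound determined by $q(b)$, so no general theorem of that shape can rule them out here --- one must use the extra $\Oo_B$-structure peculiar to the ramified Drinfeld case. (ii) The local-equation route is circular in your setup: the equations of Proposition \ref{equation in Pappas model} are computed only at superspecial points, so to ``read off'' the horizontal part from them you must already know that every horizontal component reduces to a superspecial point --- but that is the second assertion of the theorem, and in your scheme it is itself deduced from the level-zero claim. Relatedly, ``CM by the ramified field $F$ makes $M$ stable under $\tau$'' is false as stated: every point of $\cN_{(1,1)}$ carries the $\Oo_F$-action, yet only the superspecial lattices are $\tau$-stable; $\tau$-stability here comes from the integral identification $X\cong\cY\oplus\cY$, not from the CM. Finally, invoking Theorem \ref{decomposition of special cycle} at the outset to reduce to $q(b)\neq 0$ is also circular, since its proof uses the present theorem; this reduction is in any case unnecessary, as the paper's argument only needs $\bx\neq 0$.
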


\begin{proof}
By assumption $\cZ=\Spf{R}$ where $R$ is a finite extension of $ \Oo_{\breve{F}_0}$.
Let $X$ be the strict formal $\Oo_{F_0}$-module over $\cZ$ that is the pullback from the universal strict formal $\Oo_{F_0}$-module over $\cN_{(1,1)}$ which by Theorem \ref{alternative description F} carries an $\Oo_B$ action $\iota_{B}$. By the definition of $\cZ(\bx)$, $\bx$ lifts to a homomorphism $\bx:\cY\rightarrow X$. We now define a morphism $\phi:\cY\oplus \cY \rightarrow X$ by
\[\phi(p_1,p_2)=x(p_1)+\iota_{B}(\delta)\circ x(p_2),\]
where $p_1,p_2\in \cY(S)  $ for an $R$-algebra $S$.
We give $ \cY\oplus \cY $ an $\Oo_B$ action $\iota:\Oo_B\rightarrow  \End_{\Oo_{F_0}} (\cY\oplus \cY) \cong \mathrm{M}_{2\times 2}(\Oo_F)$ defined by
\begin{equation}\label{Oo_BactiononY+Y}
\iota(\pi)=\left(\begin{array}{cc}
\pi & 0 \\
0 & -\pi
\end{array}\right) ,\
\iota(\delta)=\left(\begin{array}{cc}
0 & 1 \\
\delta^2 & 0
\end{array}\right).
\end{equation}
Then $\phi$ becomes an $\Oo_B$-linear homomorphism.

We claim that $\phi$ is an isogeny. By \cite[Proposition 1]{tate1967}, the category of connected $\Oo_{F}$ modules and the category of divisible commutative formal Lie groups with $\Oo_{F}$ action are equivalent.
Let $A=R[[T_1,T_2]]$ considered as the structure ring of $\cY\oplus \cY$ and $B$ be the structure ring of $X$. It suffices to show that the induced map $\phi^\sharp:B\rightarrow A$ is injective. If  $I=\mathrm{Ker}(\phi^\sharp)$ is nontrivial, then $\phi $ factors through the sub formal group scheme $X'=\Spf B/I$ of $X$. Since $A$ has characteristic zero, so does $B/I$. By base change to the fraction field $F(R)$ of $R$ we can apply a theorem of Cariter \cite{cartier1962} and conclude that $X'\otimes_R F(R) $ is a one dimensional formal group law over $F(R)$. But by assumption, $X'\otimes_R F(R) $ has an $\Oo_B$ action which is impossible. Hence $I=\{0\}$. This proves the claim.

Our goal is to show that $X$ is in fact isomorphic to $\cY\oplus \cY$.
We define the Tate module for a $\Oo_F$-module $G$ by
\[T(G)=\lim_{\substack{\longleftarrow \\ n}} G[\pi^n].\]
We can identify $T=T(\cY\oplus\cY)$ with $(\Oo_F)^2$ and further with $\Oo_B$ using the $\Oo_B$ action on $T(\cY\oplus\cY)$. To be more specific the element $t=\left(\begin{array}{c}
1  \\
0
\end{array}\right)\in T$ is a generator of $T$ under the $\Oo_B$ action given by \eqref{Oo_BactiononY+Y}. Then the set of  $\Oo_B$-linear isogenies $\phi:\cY\oplus \cY\rightarrow X$ has a one-to-one correspondence with $\Oo_B$ stable lattices $T'$ such that
\begin{equation}
T\subseteq T'\subset T^0 =T\otimes_{\Oo_F} F.
\end{equation}
Since $T$ is a free $\Oo_B$ module of rank $1$, $T'$ must be of the form $\pi^{-n} \cdot T$ for some $n\geq 0$. This shows that $X$ is in fact isomorphic to $\cY\oplus \cY$.
Identifying $X= \cY\oplus \cY$, we see from (\ref{Oo_BactiononY+Y}) that $ j = \hbox{diag}(\pi, \pi) \in \End_{\Oo_B} (X)$. Since $\tr_{F/F_0} \tr j =0$, this implies that  $X \in \mathcal Z(j) \subset \mathcal M$---the special divisor defined in  \cite[Definition 2.1]{KRshimuracurve}. So $\mathcal Z \subset Z(j)$ under the isomorphism $\mathcal N \cong \mathcal M$. Now we conclude from case (iii) of \cite[Proposition 4.5]{KRshimuracurve} that $\cZ \cong \SpfOF$.

By construction we know that the relative Dieudonn\'e module $M(\bar{X})=M(\bY)\oplus M(\bY)$ is fixed by $\tau=\pi V^{-1}$ where $\bar{X}=X\otimes \bar\kay$. By   \cite[Lemma 3.2]{KR3}, $\cZ$ intersects with the special fiber of $\cN_{(1,1)}$ at a superspecial point.
\end{proof}

\subsection{Hodge filtration and equation of special cycles in $\cN_{(1,1)}$}
Now we begin to study the equations of special divisors at a superspecial point. We will use Grothendieck-Messing theory to determine the equations, which in turn requires a description of the Hodge filtrations of the relevant relative Dieudonn\'e crystals. In the following we use the Deligne functor to obtain such a description.

In the rest of this section, for $x=(X,\iota,\lambda,\rho)\in \cN_{(1,1)}(\bar\kay)$, we denote $\rho(M(X))\subset N$ by $M(x)$.
Then identification  $\cN_{(1, 1)}/\bar\kay \cong \cM/\bar\kay$ induces an $\Oo_B$ and thus an $\Oo_E$-action on $M(x)$. This makes $M(x)$ an $\Oo_E \otimes_{\Oo_{F_0}} \Oo_{\breve F_0}$-module and induces  a $\Z/2\Z$ grading (see  \eqref{OoEgrading'})
\[M(x)=M(x)_0\oplus M(x)_1.\]

Now let $x$ be a superspecial point $pt_{\Lambda_0}$ of $\cN_{(1, 1)}$, which implies that $M(x)$ is $\tau$-invariant and that $\Lambda_0=M(x)^\tau$ is a vertex $\Oo_F$-lattice of type $0$ with  $M(x) =\Lambda_0 \otimes_{\Oo_F}\Oo_{{\breve{F}}}$. As explained in Section \ref{section of N_F} we can choose an $\Oo_{F_0}$-basis $\{e_0,f_1\}$ for $M_0(x)^{\tau}$ and $\{f_0,e_1\}$ for $M_1(x)^{\tau}$ such that
\begin{equation}\label{basis of Dieudonne module of superspecial point}
    \pi e_i=f_i,\,  \pi f_i=\pi_0 e_i, \, V e_i=f_i,\,  Vf_i=\pi_0 e_i, \, i=0,\,1.
\end{equation}
and
\begin{equation}\label{standarde_0e_1cont}
    (e_0,e_0)=(e_1,e_1)=0, (e_0,e_1)=-\delta^2.
\end{equation}
Then $\Lambda_0=\mathrm{Span}_{\Oo_F}\{e_0,e_1\}$,  and the vertex lattices of type $2$ containing $\Lambda_0$ are
\begin{align}
&\Lambda_2=\mathrm{Span}_{\Oo_F}\{\pi^{-1}e_0,e_1\},
&\Lambda_2'=\mathrm{Span}_{\Oo_F}\{e_0,\pi^{-1}e_1\}.
\end{align}
Therefore, $x\in \bP_{\bar{\Lambda}_2}\cap \bP_{\bar{\Lambda}_2'}$. Since $\cN\cong \cM$, we have $\cN_{(1, 1)}(\bar\kay)= \cM(\bar\kay)$ and  there should exist homothety classes of rank $2$ $\Oo_{F_0}$-lattices $[\Lambda]$ and $[\Lambda']$ such that $x\in \bP_{[\Lambda]}\cap \bP_{[\Lambda']}$. By \cite[Remark 3.4]{San1}, we can take
\begin{align*}
&\Lambda=M_0(x)^{\tau}=\mathrm{span}_{\Oo_{F_0}}\{e_0,f_1\},\\ &\Lambda'=\pi^{-1}M_1(x)^{\tau}=\mathrm{span}_{\Oo_{F_0}}\{\pi^{-1}f_0,\pi^{-1}e_1\}=\mathrm{span}_{\Oo_{F_0}}\{e_0,\pi_0^{-1}f_1\}.
\end{align*}
Another way to relate these different types of lattices are  the following equations
\begin{equation}
    \Lambda=((\Lambda_2\otimes_{\Oo_{F_0}}\Oo_E)_0)^\tau ,\ \Lambda'=((\Lambda_2'\otimes_{\Oo_{F_0}}\Oo_E)_0)^\tau.
\end{equation}
In particular we can identify $\bP_{[\Lambda]}(\bar\kay)$ with  $\bP_{\bar{\Lambda}_2}(\bar\kay)$ and $\bP_{[\Lambda']}(\bar\kay)$ with  $\bP_{\bar{\Lambda}_2'}(\bar\kay)$.

Consider the Deligne functor $F_{[\Lambda,\Lambda']}$ (see \cite{BC}):
\[ \begin{tikzcd}
\pi_0 \Lambda'  \arrow{r}{} \arrow{d}{\alpha'/\pi_0} & \Lambda  \arrow{d}{\alpha} \arrow{r}{}& \Lambda' \arrow{d}{\alpha'}  \\
\mathcal{L}' \arrow{r}{c'}& \mathcal{L}\arrow{r}{c}&\mathcal{L}'.
\end{tikzcd}
\]

For an $\Oo_{\breve{F}_0}$-algebra $R\in \mathrm{Nilp}_{\Oo_{\breve{F}_0}},$ the conditions of $\alpha$ and $\alpha'$ in Proposition \ref{Deligne functor rep at superspecial point} imply that $\alpha(e_0)$ generates $\mathcal{L}$ and $\alpha'(\pi^{-1}e_1)$ generates $\mathcal{L}'$. We identify $\mathcal L$ and $\mathcal L'$ with $R$ by setting
$$
\alpha(e_0)=1, \quad \hbox { and } \quad  \alpha'(\pi^{-1} e_1)=\alpha' (\pi_0^{-1}f_1)=1.
$$
Let
$$
  t_0= \alpha(f_1) \in \mathcal L = R, \quad \hbox{ and }  \quad         t_1=  \alpha' (e_0)=\alpha'(\pi^{-1} f_0) \in \mathcal L'=R. $$
Then  $c$ is simply the multiplication by $t_0$ and $c'$ is the multiplication by $t_1$.
So by commutativity of the above diagram, we have
\begin{equation}\label{t_0t_1=pi_0}
t_0 t_1=\pi_0.
\end{equation}
Consider \begin{align}\alpha\otimes 1:\Lambda\otimes_{\Oo_{F_0}} R\rightarrow \mathcal{L}\quad \text{and}\quad \alpha'\otimes 1:\Lambda'\otimes_{\Oo_{F_0}} R\rightarrow \mathcal{L} '  \label{map alpha and alpha'}. \end{align}
We have
\begin{align}
&\text{kernel of~} \alpha\otimes 1 =\mathrm{span}_{R}\{f_1\otimes 1 -e_0\otimes t_0\}  \label{ker of alpha'},\\
&\text{kernel of~} \alpha'\otimes 1 =\mathrm{span}_{R}\{\pi^{-1}f_0\otimes 1 -\pi^{-1} e_1\otimes t_1\}.\label{ker of alpha}
\end{align}

For a strict formal $\Oo_{F_0}$-module $X$ over $R$, let $\mathbb{D}(X/R)$ be its relative (to $\Oo_{F_0}$) Dieudonn\'e crystal with  Hodge filtration $\mathrm{Fil} \mathbb{D}(X/R)$, see for example \cite[Section 3]{ACZ}). Then we have the exact sequence:
\begin{equation}
    0\rightarrow \mathrm{Fil} \mathbb{D}(X/R)\rightarrow \mathbb{D}(X/R) \rightarrow \mathrm{Lie} (X/R)\rightarrow 0.
\end{equation}

\begin{proposition}\label{prop of filtration}
Let $x=pt_{\Lambda_0}$ be a superspecial point. For an $\mathcal{O}_{\mathcal{N}_{(1,1)},x}$-algebra $R\in \mathrm{Nilp}_{\Oo_{\breve{F}_0}}$ where $\mathcal{O}_{\mathcal{N}_{(1,1)},x}$ is the local ring of $\cN_{(1,1)}$ at $x$, let $t_0, t_1\in R$ be the image of $T_0, T_1$ under the structure morphism, and $X_{t_0,t_1}$ be the corresponding strict $\Oo_{F_0}$ module over $R$. Then we have the following identifications
	$$
	\mathbb{D}(X_{t_0,t_1}/R)_0=\mathrm{span}_{\Oo_{F_0}}\{e_0,f_1\}\otimes_{\Oo_{F_0}} R, \quad
	\mathrm{Fil} \mathbb{D}(X_{t_0,t_1}/R)_0=\mathrm{span}_{R}\{f_1\otimes 1-e_0\otimes t_0\} ,
$$
and
$$
	\mathbb{D}(X_{t_0,t_1}/R)_1=\mathrm{span}_{\Oo_{F_0}}\{e_1,f_0\}\otimes_{\Oo_{F_0}} R, \quad
	\mathrm{Fil} \mathbb{D}(X_{t_0,t_1}/R)_1=\mathrm{span}_{R}\{f_0\otimes 1-e_1\otimes t_1\}.
$$
\end{proposition}

\begin{proof}
	Since $x$ is a superspecial point, both $0$ and $1$ are critical indices. Hence $M(x)$ is $\tau$ invariant and $M(x)^{\tau}=\mathrm{span}_{\Oo_{F_0}}\{e_0,f_0,e_1,f_1\}$. According to the constructions in \cite{BC}, especially how Deligne's functor is related with the moduli funtor of special formal $\Oo_B$-module, we know that
	\begin{align*}
	\mathbb{D}(X_{t_0,t_1}/R)_0&=M_0(x)^{\tau}\otimes_{\mathcal{O}_{F_0}} R=\mathrm{span}_{\Oo_{F_0}}\{e_0, f_1\}\otimes_{\mathcal{O}_{F_0}} R,
\\
	\mathbb{D}(X_{t_0,t_1}/R)_1&=M_1(x)^{\tau}\otimes_{\mathcal{O}_{F_0}} R=\mathrm{span}_{\Oo_{F_0}}\{e_1, f_0\}\otimes_{\mathcal{O}_{F_0}} R.
\end{align*}
	Under these identifications, the map  $\alpha\otimes 1$ in (\eqref{map alpha and alpha'})  is the natural quotient map from $\mathbb{D}(X_{t_0,t_1}/R)_0$ to $\mathrm{Lie}(X_{t_0,t_1})_0$,  and the maps $\pi\alpha'\otimes 1$  is the natural quotient map from $\mathbb{D}(X_{t_0,t_1}/R)_1$ to $\mathrm{Lie}(X_{t_0,t_1})_1$. Hence $\mathrm{Fil} \mathbb{D}(X_{t_0,t_1}/R)_0$ is the kernel of $\alpha\otimes 1$
	and $\mathrm{Fil} \mathbb{D}(X_{t_0,t_1}/R)_1$ is the kernel of $\pi\alpha'\otimes 1$.  By (\ref{ker of alpha'}) and (\ref{ker of alpha}), we obtain the proposition.
\end{proof}

Similarly for the universal object $\cY$ over  $\mathcal N_{(1,0)}$ and  an $\Oo_{\breve{F}}$-algebra $R  \in \mathrm{Nilp}_{\Oo_{\breve{F}}}$, it is easy to see that
\begin{equation}
\mathbb D(\cY_R) = \hbox{Span}_{R}\{ e \otimes 1, f \otimes 1\}, \quad
\hbox{Fil} \mathbb D(\cY_R) = \hbox{Span}_R \{f\otimes 1 - e \otimes \pi\},
\end{equation}
since  the $\Oo_F$-action coincides with the structure action $\Oo_F \rightarrow R$ on $\hbox{Lie}(\cY_R)$:  $\pi e\otimes 1 = e \otimes \pi$. Here the tensor is over $\Oo_{F_0}$.

\begin{proposition}\label{equation in Pappas model}
Let $x \in \mathcal Z(\bx)(\bar\kay)$ be a superspecial  point $pt_{\Lambda_0}$. Choose a basis  $\{e_0,e_1\}$ of $\Lambda_0$ such that \eqref{basis of Dieudonne module of superspecial point} and \eqref{standarde_0e_1cont} are satisfied. Assume $b =\bx(e)=\alpha_0 e_0 + \alpha_1 e_1 \in \Lambda_0$ where $\alpha_i \in \mathcal{O}_F$. Recall a neighborhood of $x$ in $\cN_{(1,1)}$ is
\begin{equation}\label{breveOmegaequation}
\breve{\Omega}_{[\Lambda,\Lambda'],\breve F}=\mathrm{Spf}(\mathcal{O}_{\breve{F}}[T_0,T_1,(T_0^{\q-1}-1)^{-1},(T_1^{\q-1}-1)^{-1}]/(T_0T_1-\pi_0))^{\wedge}.
\end{equation}
	Then the equations for $\mathcal{Z}(\mathbf{x})$ in the local ring $\Oo_{\mathcal{N}_{(1,1)},x}$ are given by (here $\bar\alpha$ stands for the Galois conjugate of $\alpha$):
	\begin{align}
	\bar{\alpha}_1 T_0-\pi \bar{\alpha}_0=0,\, \hbox{ and  } \bar{\alpha}_0 T_1-\pi \bar{\alpha}_1=0.
	\end{align}
\end{proposition}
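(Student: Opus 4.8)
The plan is to apply Grothendieck--Messing deformation theory at the chart of $x=pt_{\Lambda_0}$. Since $x$ lies in $\breve{\Omega}_{[\Lambda,\Lambda'],\breve F}$, a point $X_{t_0,t_1}$ of this chart over $R\in\mathrm{Nilp}_{\Oo_{\breve F}}$ (with $t_0t_1=\pi_0$) lies on $\cZ(\bx)$ precisely when the quasi-homomorphism $\bx$ lifts to an honest $\Oo_F$-homomorphism $\cY_R\to X_{t_0,t_1}$. By Grothendieck--Messing theory this holds if and only if the induced covariant map of crystals $\mathbb{D}(\bx)\colon\mathbb{D}(\cY_R)\to\mathbb{D}(X_{t_0,t_1})$ carries $\mathrm{Fil}\,\mathbb{D}(\cY_R)$ into $\mathrm{Fil}\,\mathbb{D}(X_{t_0,t_1})$. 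Both filtrations are already available: $\mathrm{Fil}\,\mathbb{D}(\cY_R)=\mathrm{Span}_R\{f\otimes 1-e\otimes\pi\}$ from the computation preceding this proposition, and $\mathrm{Fil}\,\mathbb{D}(X_{t_0,t_1})$ is the graded module $\mathrm{Span}_R\{f_1-t_0e_0\}\oplus\mathrm{Span}_R\{f_0-t_1e_1\}$ of Proposition \ref{prop of filtration}.

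Next I would make the crystal map explicit. As $\mathbb{D}(\bx)$ is the $R$-linear extension $\bx\otimes 1$ of the Dieudonn\'e map $\bx\colon M(\bY)\to M(\bX)$, it is determined by $\bx(e)=b$ and $\bx(f)=\bx(Ve)=Vb$. Writing $\alpha_i=a_i+c_i\pi$ with $a_i,c_i\in\Oo_{F_0}$ and using $\pi e_i=f_i$, one expands
\[
b=a_0e_0+c_0f_0+a_1e_1+c_1f_1,\qquad Vb=c_0\pi_0e_0+a_0f_0+c_1\pi_0e_1+a_1f_1,
\]
the second identity using $Ve_i=f_i$, $Vf_i=\pi_0e_i$ and that $V$ fixes the coefficients $a_i,c_i\in\Oo_{F_0}$. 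Applying $\bx\otimes 1$ to the generator $f\otimes 1-e\otimes\pi$ (with $\pi$ the scalar image of the uniformizer in $R$) yields $Vb\otimes 1-\pi\,(b\otimes 1)$, which I would then split into its degree-$0$ and degree-$1$ components under the $\Z/2\Z$-grading $M_0=\mathrm{span}\{e_0,f_1\}$, $M_1=\mathrm{span}\{e_1,f_0\}$.

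Then I would impose membership in the filtration. Since the grading on $\mathbb{D}(X_{t_0,t_1})$ is respected by $\mathrm{Fil}$, the single containment splits into two rank-$1$ conditions. The degree-$0$ part equals $(c_0\pi_0-\pi a_0)e_0+(a_1-\pi c_1)f_1$, and its lying in $\mathrm{Span}_R\{f_1-t_0e_0\}$ forces $(c_0\pi_0-\pi a_0)+t_0(a_1-\pi c_1)=0$; similarly the degree-$1$ part $(a_0-\pi c_0)f_0+(c_1\pi_0-\pi a_1)e_1$ lying in $\mathrm{Span}_R\{f_0-t_1e_1\}$ forces $(c_1\pi_0-\pi a_1)+t_1(a_0-\pi c_0)=0$. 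Finally, substituting $\bar\alpha_i=a_i-c_i\pi$ and $\pi\bar\alpha_i=\pi a_i-c_i\pi_0$ rewrites these two relations as $\bar\alpha_1T_0-\pi\bar\alpha_0=0$ and $\bar\alpha_0T_1-\pi\bar\alpha_1=0$, which are the claimed equations in $\Oo_{\cN_{(1,1)},x}$.

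The hard part will not be the linear algebra but pinning down the crystal map correctly: one must verify that $\mathbb{D}(\bx)$ is literally $\bx\otimes 1$ (covariant, with no Frobenius twist), that the operator $\pi$ in $\mathrm{Fil}\,\mathbb{D}(\cY_R)$ becomes a scalar after applying $\bx\otimes 1$, and that $\bx$---being only $\Oo_F$-linear and not $\Oo_E$-linear---need not preserve the $\Z/2\Z$-grading, so that the image of a single filtration generator genuinely contributes to both graded pieces. Keeping these compatibilities straight, together with the identity $t_0t_1=\pi_0$, is what makes the two equations emerge in the clean conjugate-linear form stated.
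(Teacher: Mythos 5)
Your computational core is exactly the paper's: you feed the filtration from Proposition \ref{prop of filtration} and the generator $f\otimes 1-e\otimes\pi$ of $\mathrm{Fil}\,\mathbb{D}(\cY_R)$ into the Grothendieck--Messing criterion, split the image of that generator into its two graded pieces, and the two rank-one containments give precisely $\bar{\alpha}_1T_0-\pi\bar{\alpha}_0=0$ and $\bar{\alpha}_0T_1-\pi\bar{\alpha}_1=0$. (Your $Vb$ is the paper's $b'=\pi b$; they agree because $b\in N^{\tau}$ with $\tau=\pi V^{-1}$, so that is not a discrepancy, and your expansion of the degree-$0$ and degree-$1$ components matches the paper's line by line.)

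The one genuine gap is the opening step, where you assert that for an arbitrary $R\in\mathrm{Nilp}_{\Oo_{\breve{F}}}$ a point $X_{t_0,t_1}$ lies on $\cZ(\bx)$ if and only if the crystalline filtration condition holds. Grothendieck--Messing only provides a lifting criterion along a square-zero (PD) thickening $R\twoheadrightarrow R_0$ once the homomorphism is already known over $R_0$; it does not by itself compute the defining ideal of the closed formal subscheme $\cZ(\bx)$ in the local ring. The paper supplies the bridge: with $A=\Oo_{\cN_{(1,1)},x}$ and $I$ the ideal of $\cZ(\bx)$, it passes to $A_n=A/\pi^nA$ and then to $B=A_n/\mathfrak{m}_nI_n$, so that $J=I_n/\mathfrak{m}_nI_n$ satisfies $J^2=0$ and carries a PD structure; applying the lifting criterion to the intermediate quotients $B\twoheadrightarrow B/\mathfrak{b}\twoheadrightarrow B'=A_n/I_n$ for $\mathfrak{b}\subset J$ identifies $J$ as generated by the two elements, and Nakayama together with $\pi$-adic completeness of $I$ then promotes this to $I$ itself. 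You should insert this reduction; once it is in place, the rest of your plan is correct as written and coincides with the paper's argument.
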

\begin{proof}
First,  Lemma \ref{lem3.6} implies $b=\bx(e) \in  \Lambda_0$.
Let
$$
 b'\coloneqq \mathbf{x}(f)=\pi \mathbf{x}(e)=\alpha_0 f_0 + \alpha_1 f_1.
 $$
for some $ \alpha_i\in \Oo_F$. Consider the local ring
	$A\coloneqq\mathcal{O}_{\mathcal{N}_{(1, 1)},x}$.
	Let $I$ denote the ideal corresponding to $\mathcal{Z}(\mathbf{x})$.  Set
	\[A_n\coloneqq A/\pi^{n}A,\quad I_n\coloneqq (I+\pi^{n}A)/\pi^n A  \subset A_n.\]
	Since $A$ is Noetherian, $I$ is $\pi$-adic complete. Hence to prove the proposition, we just need to prove $I_n$ are generated by the images of $\bar{\alpha}_1 T_0-\pi \bar{\alpha}_0$ and $\bar{\alpha}_0 T_1-\pi \bar{\alpha}_1$ in $A_n$ for all $n$. Let $\mathfrak{m}_n$ denote the maximal ideal of $A_n$. Set
	\[B\coloneqq A_n/\mathfrak{m}_nI_n,\quad B'\coloneqq A_n/I_n.\]
	Let $J\coloneqq I_n/\mathfrak{m}_n I_n$, which is the kernel of of the projection $B\rightarrow B'$. Note that $J^2=0$, so it has a $PD$ structure. By Nakayama's lemma, it suffices to show $J$ is generated by images of $\bar{\alpha}_1 T_0-\pi \bar{\alpha}_0$ and $\bar{\alpha}_0 T_1-\pi \bar{\alpha}_1$ in $B$.

   Let $\mathcal X$ be the universal strict formal $\Oo_{F_0}$-module over $\mathcal N_{(1, 1)}$. The natural map
$$
\Oo_{\breve{\Omega}_{[\Lambda,\Lambda'],\breve F}}\rightarrow A \rightarrow A_n \rightarrow B \rightarrow B' \rightarrow \bar\kay
$$
  induces the strict formal $\Oo_{F_0}$-modules  $\mathcal X_B$, $\mathcal X_{B'}$ and $\mathcal X_{\bar\kay}=X$ with $X$ being associated to $x \in \cZ(\bx)(\bar\kay)$.
Since $I$ is the definition ideal of $\cZ(\bx)$, $B'$ is a quotient of  $A/I$,  the quasi-morphism
	\[\rho^{-1}\circ \mathbf{x}:\bY\rightarrow {X}\]
	lifts to a morphism
	\[\mathbf{x}_{{B'}}:\cY_{B'}\rightarrow \mathcal X_{B'} .\]
The associated morphism $\mathbb D(\bx_B')$ lifts to a morphism
$$
\mathbb D(\bx_R): \,  \mathbb{D}(\cY_{R})\rightarrow \mathbb{D}(\mathcal X_{R})
$$
for any ring $R=B/\mathfrak b$ with $\mathfrak b\subset J$, i.e.,  $B\twoheadrightarrow R \twoheadrightarrow B'$ as $J^2=0$. By Grothebdieck-Messing theory,
	$\bx$ lifts to a momorphism
\begin{equation} \label{eq:Lifting}
  \bx_R: \,   \cY_R \rightarrow \mathcal X_R
 \end{equation}
  if and only if
  $$
  \mathbb{D}(\mathbf{x}_{R})(f\otimes 1-e\otimes \pi) =b'\otimes 1-b\otimes \pi \in \mathrm{Fil}\mathbb{D}(\mathcal X_R)
  .$$
Write 	 $\alpha_0=a_0+b_0 \pi,\  \alpha_1=a_1+b_1\pi$. Then
\begin{align*}
&b'\otimes 1-b\otimes \pi\\
&=(\alpha_0 f_0+\alpha_1 f_1)\otimes 1 -(\alpha_0 e_0 + \alpha_1 e_1)\otimes \pi\\
&=((a_0+b_0\pi)f_0+(a_1+b_1\pi)f_1)\otimes 1- ((a_0+b_0\pi)e_0+(a_1+b_1\pi)e_1)\otimes \pi \\
&=(a_0f_0+b_0\pi_0 e_0 +a_1f_1+b_1\pi_0 e_1)\otimes 1-(a_0e_0 + b_0f_0+ a_1e_1+b_1f_1)\otimes \pi \\
&=f_1\otimes (a_1-\pi b_1)-e_0\otimes \pi(a_0-\pi b_0)+f_0\otimes (a_0-\pi b_0)-e_1\otimes \pi(a_1-\pi b_1)\\
&=f_1\otimes \bar{\alpha}_1-e_0\otimes \pi \bar{\alpha}_0+f_0\otimes \bar{\alpha}_0-e_1\otimes \pi \bar{\alpha}_1.
\end{align*}
Combining this with Proposition \ref{prop of filtration}, we see that  the lifting (\ref{eq:Lifting}) exists if and only if
$$
\bar{\alpha}_1 T_0-\pi \bar{\alpha}_0=0,  \hbox{ and  }\bar{\alpha}_0 T_1-\pi \bar{\alpha}_1=0   \hbox{ in R},
$$
i.e.,
$$
\bar{\alpha}_1 T_0-\pi \bar{\alpha}_0, \hbox{ and }  \bar{\alpha}_0 T_1-\pi \bar{\alpha}_1 \in  \mathfrak b.
$$
Here we identify $T_i$ with their images in $R$ via  $A\rightarrow A_n \rightarrow B \rightarrow R$. Since $I$ is the ideal of $\cZ(\bx)$,   the lifting (\ref{eq:Lifting}) exists only when $\mathfrak b=J$. So $J$ is generated by $\bar{\alpha}_1 T_0-\pi \bar{\alpha}_0$ and $  \bar{\alpha}_0 T_1-\pi \bar{\alpha}_1$ as claimed.

\end{proof}

\subsection{Local coordinate charts in the Kr\"amer model}\label{coordinatechartsKramermodel}
Now we describe the local equation of a special divisor in the Kr\"amer model and use it to give a decomposition of special divisor.
Locally around the superspecial point $x \in \cN_{(1, 1)}(\bar\kay)$ corresponding to $ \Lambda_0=\Lambda_2\cap \Lambda_2'$, we have \eqref{breveOmegaequation} and $x$ corresponds to the maximal ideal $\mathfrak{m}_{x}=(T_0,T_1,\pi)$. We need to blow it up to get the exceptional divisor of $\cN^{\Kra}=\cN_{(1, 1)}^{\Kra}$.

 For simplicity, consider (ignoring the other unimportant restrictions of $\breve{\Omega}_{[\Lambda,\Lambda'],F}$)

 \[D\coloneqq \mathrm{Spf}(\Oo_{\breve{F}}[T_0,T_1]/(T_0T_1-\pi_0))^{\wedge}.\]
Let $\mathrm{Bl}_{x} D$ denote the blow-up of $D$ at $\mathfrak m_{x}$, which has three charts. Over the first chart $D_1$, we have
\begin{align} \label{eq:Chart1}
\pi S_{\frac{T_0}{\pi}}=T_0,\ \pi S_{\frac{T_1}{\pi}}=T_1,
\end{align}
where we regard $S_{\frac{x}{\pi}}$ as element in $\mathrm{Frac}(\mathcal{O}_{\breve{F}}[T_0,T_1])$  and
\begin{align}\label{D1 chart of blow up}
D_1&=\mathrm{Spf}\mathcal{O}_{\breve{F}}([T_0,T_1,S_{\frac{T_0}{\pi}},S_{\frac{T_1}{\pi}}]/(\pi S_{\frac{T_0}{\pi}}-T_0, \pi S_{\frac{T_1}{\pi}}-T_1,S_{\frac{T_0}{\pi}}S_{\frac{T_1}{\pi}}-1))^{\wedge} \notag \\
&=\mathrm{Spf}\mathcal{O}_{\breve{F}}([S_{\frac{T_0}{\pi}},S_{\frac{T_1}{\pi}}]/(S_{\frac{T_0}{\pi}}S_{\frac{T_1}{\pi}}-1))^{\wedge}.
\end{align}
Over the second chart $D_2$, we have:
\begin{align} \label{eq:Chart2}
T_0S_{\frac{T_1}{T_0}}=T_1,\,   T_0S_{\frac{\pi}{T_0}}=\pi,
\end{align}
and
\begin{align}\label{D2 chart of blow up}
D_2
&=\mathrm{Spf}\mathcal{O}_{\breve{F}}([T_0,S_{\frac{\pi}{T_0}}]/(T_0S_{\frac{\pi}{T_0}}-\pi))^{\wedge}.
\end{align}
Over the third chart $D_3$, we have:
\begin{align}
T_1S_{\frac{T_0}{T_1}}=T_0,\, T_1S_{\frac{\pi}{T_1}}=\pi,
\end{align}
and
\begin{align}\label{D3 chart of blow up}
D_3
&=\mathrm{Spf}\mathcal{O}_{\breve{F}}([T_1,S_{\frac{\pi}{T_1}}]/(T_1S_{\frac{\pi}{T_1}}-\pi))^{\wedge}
\end{align}
by symmetry. $D_1$, $D_2$ and $D_3$ are glued in the obvious way, and
it is easy to see that $D_1,D_2$ and $D_3$ are all regular.
Let $\Exc$ denote the exceptional divisor of $\cN^{\Kra}$, then
\begin{align}\label{Excequation}
\Exc \cap D_1 &=\mathrm{Spf}\bar{\kay}([S_{\frac{T_0}{\pi}},S_{\frac{T_1}{\pi}}]/(S_{\frac{T_0}{\pi}}S_{\frac{T_1}{\pi}}-1))^{\wedge}, \notag\\
\Exc \cap D_2&=\mathrm{Spf}\mathcal{O}_{\breve{F}}([T_0,S_{\frac{\pi}{T_0}}]/(T_0,T_0S_{\frac{\pi}{T_0}}-\pi))^{\wedge}=\mathrm{Spf}\bar{\kay}([S_{\frac{\pi}{T_0}}])^{\wedge},
\\
\Exc \cap    D_3&=\mathrm{Spf}\mathcal{O}_{\breve{F}}([T_1,S_{\frac{\pi}{T_1}}]/(T_1,T_1S_{\frac{\pi}{T_1}}-\pi))^{\wedge}=\mathrm{Spf}\bar{\kay}([S_{\frac{\pi}{T_1}}])^{\wedge}.
 \notag
\end{align}
$\Exc \cap D_2$ glues with $\Exc \cap D_3$ over $\Exc \cap D_1$ by $S_{\frac{\pi}{T_0}}=\frac{1}{S_{\frac{\pi}{T_1}}}$, so  $\Exc$ is isomorphic to $\bP^1_{\bar{\kay}}$. The projective line $\bP_{\bar{\Lambda}_2}$ only intersects the second chart and is defined by the equation
\begin{equation}\label{PLambda2equation}
    S_{\frac{\pi}{T_0}}=0.
\end{equation}
Similarly $\bP_{\bar{\Lambda}_2'}$ only intersects the third chart and is defined by the equation
\begin{equation}\label{PLambda'2equation}
    S_{\frac{\pi}{T_1}}=0.
\end{equation}
We refer to Example $8.3.53$ of \cite{L} for more details about the blow up considered here.

Now we explain how the coordinates of blow up are related with the moduli problem locally around a superspecial point $x$. Since blow up commutes with flat base change, we have
\begin{align}
\mathcal{N}_{x}^{\Kra}
\coloneqq\mathrm{Bl}_{x} D\times_{\mathcal{N}_{(1,1)}} \Spf\mathcal{O}_{\mathcal{N}_{(1,1)},x}
=\mathrm{Bl}_{x}{\Spf\mathcal{O}_{\mathcal{N}_{(1,1)},x}},
\end{align}
and let	$D_{i, x}$, $i=1, 2,3$ be the three charts for $\cN_{x}^{\Kra}$ coming from $D_i$.

Let $R\in \mathrm{Nilp}_{\Oo_{\breve{F}_0}}$ be an $\mathcal{O}_{D_{1,x}}$-algebra, and $s_{\frac{T_0}{\pi}}, s_{\frac{T_1}{\pi}}\in R$ be the image of $S_{\frac{T_0}{\pi}},S_{\frac{T_1}{\pi}}$ under the structure morphism, and let $t_0$ and $t_1$ be given by (\ref{eq:Chart1}). Then $R$ determines a point $(X_{t_0,t_1}, \mathcal{F})\in D_{1,x}(R)$ where $X_{t_0,t_1}$ is described in Proposition \ref{prop of filtration}, and $\mathcal{F}=\mathrm{Span}_R\{e_0\otimes 1+e_1\otimes s_{\frac{T_1}{\pi}}\} \subset \mathrm{Lie}(X_{t_0,t_1})$ is the filtration of the Kr\"amer moduli problem.

Let $R\in \mathrm{Nilp}_{\Oo_{\breve{F}_0}}$ be an $\mathcal{O}_{D_{2,x}}$-algebra,  and $t_0, s_{\frac{\pi}{T_0}}\in R$ be the image of $T_0,S_{\frac{\pi}{T_0}}$ under the structure morphism, and let $t_1$ be given by (\ref{eq:Chart2}). Then $R$ determines a point $(X_{t_0,t_1}, \mathcal{F})\in D_{2,x}(R)$ where $X_{t_0,t_1}$ is as before and $\mathcal{F}=\mathrm{Span}_R\{e_0\otimes 1+e_1\otimes s_{\frac{\pi}{T_0}}\} \subset \mathrm{Lie}(X_{ t_0,t_1})$. The corresponding description for an $\mathcal{O}_{D_{3,x}}$-algebra is similar.

\subsection{Proof of Theorem \ref{decomposition of special cycle} }
\begin{proof}[Proof of Theorem \ref{decomposition of special cycle}]
In the following, we use $m(\cZ^{\Kra}(\bx) ,\cZ)$ to denote the multiplicity of $\cZ$ in $\cZ^{\Kra}(\bx)$, where $\cZ$ is an irreducible component of $\cZ^{\Kra}(\bx)$. According to Theorem \ref{horizonal component of Z(x)}, the horizontal component of $\cZ(\bx)$ intersects with the special fiber of $\cN_{(1, 1)}$ at some  superspecial points. Moreover, each irreducible component of the special fiber of $\cN_{(1,1)}$ also passes through some superspecial points. Therefore to determine the multiplicity of each irreducible component, it is enough to consider the equations of special divisors at superspecial points and their pullbacks to the Kr\"amer model.

Write $b =\alpha_0 e_0 + \alpha_1 e_1$ as in Proposition \ref{equation in Pappas model}. As before we fix a superspecial point $x=pt_{\Lambda_0}\in \cN_{(1,1)}$ for a vertex lattice $\Lambda_0$ of type $0$.	Recall that the equations of $\mathcal{Z}(\bx)$ in $\mathcal{O}_{\mathcal{N}_{(1,1)},x}$ are:
	\begin{align}
	\bar{\alpha}_1 T_0-\pi \bar{\alpha}_0=0,\quad \bar{\alpha}_0 T_1-\pi \bar{\alpha}_1=0,\ \alpha_i \in \Oo_F.
	\end{align}

	When $\ord_{\pi}(\alpha _0)>\ord_{\pi}(\alpha_1)$, the equations of $\cZ^{\Kra}(\bx)$ in $\mathcal{N}_{x}^{\Kra}$ are

	\begin{align*}
	\left\{ \begin{array}{cc}
	\bar{\alpha}_1\pi (S_{\frac{T_0}{\pi}}-\frac{\bar{\alpha}_0}{\alpha_1})=0, \quad  \bar{\alpha}_1\pi (\frac{\bar{\alpha}_0}{\alpha_1}S_{\frac{T_1}{\pi}}-1)=0  , & \text{in~the~first~chart},  \\
	\bar{\alpha}_1T_0(1-\frac{\bar{\alpha}_0}{\bar{\alpha}_1}S_{\frac{\pi}{T_0}})=0,
\quad \bar{\alpha}_1T_0S_{\frac{\pi}{T_0}}(\frac{\bar{\alpha}_0}{\bar{\alpha}_1}S_{\frac{\pi}{T_0}}-1)=0 ,&   \text{in~the~second~chart},\\ \bar{\alpha}_1T_1S_{\frac{\pi}{T_1}}^2(1-\frac{\bar{\alpha}_0}{\bar{\alpha}_1}\frac{1}{S_{\frac{\pi}{T_1}}})=0,
\quad
\bar{\alpha}_1T_1S_{\frac{\pi}{T_1}} (\frac{\bar{\alpha}_0}{\bar{\alpha}_1}\frac{1}{S_{\frac{\pi}{T_1}}}-1)=0  , &  \text{in~the~third~chart.}
	\end{array}
	\right.
\end{align*}
Notice that $(\frac{\bar{\alpha}_0}{\bar{\alpha}_1}S_{\frac{T_1}{\pi}}-1)$ and $(1-\frac{\bar{\alpha}_0}{\bar{\alpha}_1}S_{\frac{\pi}{T_0}})$ are units in coordinate ring of $D_{1,x}$ and coordinate ring of $D_{2,x}$. In the third chart, we have $T_1S_{\frac{\pi}{T_1}}=\pi$, which implies that
 $$
\frac{\bar{\alpha}_0}{\bar{\alpha}_1}\frac{1}{S_{\frac{\pi}{T_1}}}-1=\alpha T_1 -1
$$
is a unit in $D_{3,x}$ with $ \alpha = \frac{\bar{\alpha}_0}{\bar\alpha_1 \pi} \in \Oo_F$.  So the above equations simplify to
	\begin{align*}
	\left\{ \begin{array}{ccc}
	&\pi^{\ord_{\pi}(\alpha_1)+1} =0,   & \text{in~the~first~chart},  \\
	&\pi^{\ord_{\pi}(\alpha_1)}T_0=T_0^{\ord_{\pi}(\alpha_1)+1}(S_{\frac{\pi}{T_0}})^{\ord_{\pi}(\alpha_1)}=0, &   \text{in~the~second~chart},\\
	&\pi^{\ord_{\pi}(\alpha_1)+1}=T_1^{\ord_{\pi}(\alpha_1)+1}(S_{\frac{\pi}{T_1}})^{\ord_{\pi}(\alpha_1)+1}=0,   &  \text{in~the~third~chart}
	.\end{array}
	\right.\end{align*}
Therefore, it has no horizonal component, and we have by \eqref{Excequation}, \eqref{PLambda2equation} and \eqref{PLambda'2equation}
	\begin{align}
	&m(\cZ^{\Kra}(\bx),\bP_{\bar{\Lambda}_2})=\ord_{\pi}(\alpha_1), \notag \\
	&m(\cZ^{\Kra}(\bx),\bP_{\bar{\Lambda}_2'})=\ord_{\pi}(\alpha_1)+1,\\
	&m(\cZ^{\Kra}(\bx),\Exc_{\Lambda_0})=\ord_{\pi}(\alpha_1)+1. \notag
	\end{align}
Similarly if $\ord_{\pi}(\alpha_0)<\ord_{\pi}(\alpha_1)$, $\cZ^{\Kra}(\bx)$ has  no  horizonal component, and
 \begin{align} &m(\cZ^{\Kra}(\bx),\bP_{\bar{\Lambda}_2})=\ord_{\pi}(\alpha_0)+1, \notag \\
	&m(\cZ^{\Kra}(\bx),\bP_{\bar{\Lambda}_2'})=\ord_{\pi}(\alpha_0),\\
	&m(\cZ^{\Kra}(\bx),\Exc_{\Lambda_0})=\ord_{\pi}(\alpha_0)+1. \notag
	\end{align}
When  $\ord_{\pi}(\alpha_0)=\ord_{\pi}(\alpha_1)$(possible only when  $q(b) \ne 0$), the equations of $\cZ^{\Kra}(\bx)$ in $\mathcal{N}_{x}^{\Kra}$ are
	\begin{align*}
	\left\{ \begin{array}{cc}
	\pi^{\ord_{\pi}(\alpha_0)+1}(S_{\frac{T_1}{\pi}}-\frac{\bar{\alpha}_1}{\bar{\alpha}_0})=0,   & \text{in~the~first~chart},  \\
	T_0^{\ord_{\pi}(\alpha_0)+1}(S_{\frac{\pi}{T_0}})^{\ord_{\pi}(\alpha_0)}(S_{\frac{\pi}{T_0}}-\frac{\bar{\alpha}_1}{\bar{\alpha}_0})=0, &   \text{in~the~second~chart},\\
	T_1^{\ord_{\pi}(\alpha_0)+1}(S_{\frac{\pi}{T_1}})^{\ord_{\pi}(\alpha_0)}(S_{\frac{\pi}{T_1}}-\frac{\bar{\alpha}_0}{\bar{\alpha}_1})=0,   &  \text{in~the~third~chart},
	\end{array}
	\right.
	\end{align*}
	which  implies  by \eqref{Excequation}, \eqref{PLambda2equation} and \eqref{PLambda'2equation}
\begin{align} &m(\cZ^{\Kra}(\bx),\bP_{\bar{\Lambda}_2})=\ord_{\pi}(\alpha_0),\notag \\
	&m(\cZ^{\Kra}(\bx),\bP_{\bar{\Lambda}_2'})=\ord_{\pi}(\alpha_0),\\
	&m(\cZ^{\Kra}(\bx),\Exc_{\Lambda_0})=\ord_{\pi}(\alpha_0)+1. \notag
	\end{align}
In addition,  it has a 	 horizontal component  given by
	\begin{align}\label{equation for horizontal component}
	\left\{ \begin{array}{cc}
	S_{\frac{T_1}{\pi}}-\frac{\bar{\alpha}_1}{\bar{\alpha}_0}=0,  & \text{in~the~first~chart},  \\
	S_{\frac{\pi}{T_0}}-\frac{\bar{\alpha}_1}{\bar{\alpha}_0}=0, &   \text{in~the~second~chart},\\
	S_{\frac{\pi}{T_1}}-\frac{\bar{\alpha}_0}{\bar{\alpha}_1}=0,   &  \text{in~the~third~chart}.
	\end{array}
	\right.
	\end{align}
This is the local equation of $\mathcal Z^h(\bx)$ along the superspecial point $x=pt_{\Lambda_0}$.  In this case, one has to have   $\Lambda_0=\Lambda_b$.
 From the equation, one can also see that the horizontal component is irreducible when  $q(b) \ne 0$, and it intersects with $\Exc_{\Lambda_b}$ at one point (e.g., the image of $S_{\frac{T_1}{\pi}}=\frac{\bar{\alpha}_1}{\bar{\alpha}_0}$ in $\bar\kay$ via  the first chart).
	
	Recall $b=\bx(e)$. Let $n=n(b,\Lambda_0)=\mathrm{min}\{\ord_{\pi}(\alpha_0),\ord_{\pi}(\alpha_1)\}$. Note that
	$$
n(b,\Lambda_2)=
	\left\{\begin{array}{cc}
	n+1,  &  \ord_{\pi}(\alpha_1)>\ord_{\pi}(\alpha_0), \\
	n,     & \text{otherwise,}
	\end{array}
\right.
$$
and
$$
	n(b,\Lambda_2')=
	\left\{\begin{array}{cc}
	n+1,  &  \ord_{\pi}(\alpha_1)<\ord_{\pi}(\alpha_0),  \\
	n,     & \text{otherwise.}
	\end{array}
	\right.
$$
Hence in all three cases we have
	\begin{align}
	&m(\cZ^{\Kra}(\bx),\bP_{\bar{\Lambda}_2})=n(b,\Lambda_2), \notag \\
	&m(\cZ^{\Kra}(\bx),\bP_{\bar{\Lambda}_2'})=n(b,\Lambda_2'),\\
	&m(\cZ^{\Kra}(\bx),\Exc_{\Lambda_0})=n(b,\Lambda_0)+1. \notag
	\end{align}
The above discussion holds for any $\Lambda\in \cT(\bx)$. So we  have
	\begin{equation}
	\cZ^{\Kra}(\bx)=\sum_{\Lambda_2\in \cT(\bx)} n(b,\Lambda_2) \mathbb{P}_{\bar{\Lambda}_2}+\sum_{\Lambda_0\in \cT(\bx)} (n(b,\Lambda_0)+1) \Exc_{\Lambda_0}+\delta_b \cZ^h(\bx),
	\end{equation}
where $\delta_b =0$ or $1$ depends on whether $q(b) =0$ or not. In the latter case,
 $\cZ^h(\bx)\cong \Spf\mathcal{O}_{\breve F}$ is a horizontal divisor meeting the special fiber at $\Exc_{\Lambda_b}$.
\end{proof}

The proof (in particular (\ref{equation for horizontal component})) gives the following corollary, which will be used in next section.

\begin{corollary} \label{cor:Intersection} Let $0\ne \bx \in \bV$ and $b=\bx(e)$.  When  $q(\bx)= 0$,   $\cZ^h(\bx)$ is empty.  When $q(\bx)\ne 0$ with $b \in  \Lambda_b$,  $\cZ^h(\bx) =\Spf \Oo_{\breve F}$ is irreducible,   and intersects with the special fiber of  $\cN^{\Kra}$ at one point on the exceptional divisor $\Exc_{\Lambda_b}$ and is given by image of (\ref{equation for horizontal component}) modulo $\pi$. More precisely, write
$$\pi^{-\ord_{\pi_0} q(b)} b = \alpha_0 e_0 + \alpha_1 e_1,$$
 where $\{e_0,  e_1\}$ is a basis of $\Lambda_b$ given in Proposition \ref{equation in Pappas model}. Then $\alpha_0, \alpha_1 \in \Oo_F^\times$, and the intersection point of $\cZ^h(\bx)$ and $\cN^{\Kra}/\bar\kay$ is  $S_{\frac{T_1}{\pi}}=\frac{\bar{\alpha}
 	_1}{\bar{\alpha}_0} \pmod \pi$ in the first affine chart of the
 	neighborhood of $\cN^{\Kra}_x$ where $x=pt_{\Lambda_b}$.

\end{corollary}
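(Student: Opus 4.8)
The plan is to read this corollary off directly from the computations already carried out in the proof of Theorem \ref{decomposition of special cycle}, where equation (\ref{equation for horizontal component}) produced the local equation of the horizontal component $\cZ^h(\bx)$ in the three charts $D_{1,x}, D_{2,x}, D_{3,x}$ of $\cN^{\Kra}$ around a superspecial point $x = pt_{\Lambda_0}$. The only genuinely new input is the unit computation showing $\alpha_0,\alpha_1 \in \Oo_F^\times$; everything else is a transcription of what is already established.

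First I would settle the vanishing statement. By Lemma \ref{lem2.1} we have $q(b) = (b,b) = -\delta^2 h(\bx,\bx)$ with $\delta^2 \in \Oo_{F_0}^\times$, so $q(\bx) = 0$ exactly when $q(b) = 0$. In the proof of Theorem \ref{decomposition of special cycle} a horizontal term appears only in the case $\ord_{\pi}(\alpha_0) = \ord_{\pi}(\alpha_1)$, which forces $q(b) \ne 0$; hence when $q(\bx) = 0$ there is no horizontal component and $\cZ^h(\bx)$ is empty (this is part (2) of that theorem).

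Next, assuming $q(\bx) \ne 0$ and $b \in \Lambda_b$, I would set $c = \pi^{-\ord_{\pi_0} q(b)} b \in \Lambda_b \setminus \pi\Lambda_b$ and write $c = \alpha_0 e_0 + \alpha_1 e_1$ in the hyperbolic basis of $\Lambda_b$ from Proposition \ref{equation in Pappas model}, where $(e_0,e_0) = (e_1,e_1) = 0$ and $(e_0,e_1) = -\delta^2$. The key step is to prove $\alpha_0,\alpha_1 \in \Oo_F^\times$: since $\ord_{\pi_0} q(c) = 0$, the identity $q(c) = -\delta^2\,\tr_{F/F_0}(\alpha_0 \bar\alpha_1)$ forces $\tr_{F/F_0}(\alpha_0 \bar\alpha_1) \in \Oo_{F_0}^\times$; as $c \notin \pi\Lambda_b$ at least one $\alpha_i$ is a unit, and if the other lay in $\pi\Oo_F$ then $\tr_{F/F_0}(\alpha_0 \bar\alpha_1)$ would lie in $\pi\Oo_F \cap \Oo_{F_0} = \pi_0\Oo_{F_0}$, contradicting its being a unit. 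This places us precisely in the case $\ord_{\pi}(\alpha_0) = \ord_{\pi}(\alpha_1) = 0$ of the theorem's proof at the point $pt_{\Lambda_b}$, which is exactly the superspecial point at which that proof locates the horizontal component.

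Finally I would invoke equation (\ref{equation for horizontal component}) directly: in $D_{1,x}$ it reads $S_{\frac{T_1}{\pi}} = \bar\alpha_1/\bar\alpha_0$, and the ratio $\bar\alpha_1/\bar\alpha_0$ is unchanged if one uses $b$ in place of $c$, since scaling by the power $\pi^{\ord_{\pi_0} q(b)}$ multiplies both coefficients equally. This single section cuts out an irreducible regular subscheme, which together with Theorem \ref{horizonal component of Z(x)} identifies $\cZ^h(\bx) \cong \Spf\Oo_{\breve{F}}$; reducing mod $\pi$ and comparing with the description of $\Exc \cap D_1$ in (\ref{Excequation}) shows that the unique special-fiber point is $S_{\frac{T_1}{\pi}} \equiv \bar\alpha_1/\bar\alpha_0 \pmod{\pi}$, lying on $\Exc_{\Lambda_b}$. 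The main (and essentially only) obstacle is the trace computation establishing that both $\alpha_0,\alpha_1$ are units, together with the check that the displayed coordinate ratio is independent of the $\pi$-power normalization; once these are in hand the corollary is an immediate consequence of (\ref{equation for horizontal component}).
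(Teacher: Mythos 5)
Your proposal is correct and takes essentially the same route as the paper, which simply reads the corollary off from the local equation (\ref{equation for horizontal component}) established in the proof of Theorem \ref{decomposition of special cycle} (the case $\ord_\pi(\alpha_0)=\ord_\pi(\alpha_1)$ at $x=pt_{\Lambda_b}$), together with Theorem \ref{horizonal component of Z(x)}. The only addition is your explicit trace argument that $\alpha_0,\alpha_1\in\Oo_F^\times$ and the check that the ratio $\bar\alpha_1/\bar\alpha_0$ is independent of the $\pi$-power normalization; the paper leaves both implicit (the former is the same computation as in the base case of Lemma \ref{bruhat2}).
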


\section{Intersection between special divisors} \label{sect:intersection}
In this section we establish a series of lemmas and  prove Theorems \ref{int between special cycle,diagonal} and \ref{int between special cycle,antidiagonal}, which  are reformulation of Theorem \ref{maintheo1}.

First of all, for two divisors $\cZ_1$ and $\cZ_2$ of $\cN^{\Kra}$ such that $\cZ_1\cap \cZ_2$ is  supported on finitely many irrreducible components of  the special fiber of $\cN^{\Kra}$, we define their intersection number to be
\begin{equation} \label{eq:Intersection}
\cZ_1\cdot \cZ_2 \coloneqq \chi(\cN^{\Kra},\Oo_{\cZ_1}\otimes^{\mathbb{L}}\Oo_{\cZ_2}),
\end{equation}
where $\Oo_{\cZ_i}$ is the structure sheaf of $\cZ_i$, $\otimes^{\mathbb{L}}$ is the derived tensor product on the coherent sheaves on $\cN^{\Kra}$ and $\chi$ is the Euler-Poincar\'e characteristic.

For a full rank lattice $L_{\bx_1,\bx_2}\subset \bV$ with a basis $\{\bx_1, \bx_2\}$, let
\[\mathrm{Int}(L_{\bx_1,\bx_2})=\mathcal Z(\bx_1)\cdot \mathcal Z(\bx_2) .\]
 According to \cite[Corollary D]{Ho2}, this intersection number does not depend on the choice of the basis $\{\bx_1,\bx_2\}$.  First, we recall the following well-known fact.

\begin{proposition}(\cite[Proposition 9.1.21]{L}) \label{Prop Qing Liu}
Assume that $X$ is a regular scheme of dimension $2$, $S$ is a Dedekind scheme of dimension 1 and we have a flat proper morphism $X\rightarrow S$.
Let $s\in S$ be a closed point. The following properties are true:
	\begin{enumerate}[label=(\alph*)]
		\item For any $E\in\mathrm{Div}_s(X)$, we have $E\cdot X_s$=0. Here $\mathrm{Div}_s(X)$ is the set of Cartier divisors of $X$ supported on the special fiber $X_s$.
		\item Let $\Gamma_1,...\Gamma_r$ be the irreducible components of $X_s$ of respective multiplicities $d_1,...,d_r$. Then for any $i\leq r,$ we have
		\begin{align*}
		\Gamma_i^2=-\frac{1}{d_i}\sum_{j\neq i}d_j\Gamma_j\cdot\Gamma_i.
		\end{align*}
	\end{enumerate}
\end{proposition}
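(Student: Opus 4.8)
The plan is to reduce everything to the single observation that the fibre $X_s$ is the pullback of an effective Cartier divisor on the base, combined with the standard description of the intersection pairing against a \emph{vertical} divisor as the degree of a line bundle on a fibral curve. Since $X$ is regular of dimension $2$, every prime divisor is Cartier, so each $\Gamma_i$ and each $E\in\mathrm{Div}_s(X)$ is a Cartier divisor; moreover, because $f$ is proper, any such $E$ is projective over $k(s)$, so the degrees below make sense.

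First I would record the two inputs. (i) As $S$ is a Dedekind scheme of dimension $1$, the local ring $\mathcal{O}_{S,s}$ is a discrete valuation ring; a uniformizer $t$ cuts out $s$ as an effective Cartier divisor on $S$, and by flatness of $f$ one has $f^{\ast}(s)=X_s$ as Cartier divisors, whence $\mathcal{O}_X(X_s)\cong f^{\ast}\mathcal{O}_S(s)$. (ii) For an effective vertical divisor $E$ supported in $X_s$ and any Cartier divisor $D$, the intersection number is computed as $D\cdot E=\deg_{k(s)}\bigl(\mathcal{O}_X(D)|_E\bigr)$, by the identification of intersection numbers with degrees of restricted line bundles established earlier in \cite{L}; I extend this to arbitrary $E\in\mathrm{Div}_s(X)$ by linearity.

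For part (a) I would then compute, using symmetry of the pairing together with (i) and (ii),
\[
E\cdot X_s=\deg_{k(s)}\bigl(\mathcal{O}_X(X_s)|_E\bigr)=\deg_{k(s)}\bigl((f^{\ast}\mathcal{O}_S(s))|_E\bigr).
\]
Because every point of $E$ lies in $X_s$, the structure map restricts to the constant map $f|_E\colon E\to\Spec k(s)$, so $(f^{\ast}\mathcal{O}_S(s))|_E=(f|_E)^{\ast}\bigl(\mathcal{O}_S(s)|_s\bigr)$. The factor $\mathcal{O}_S(s)|_s=\mathcal{O}_S(s)\otimes_{\mathcal{O}_S}k(s)$ is a line bundle on the point $\Spec k(s)$, hence trivial; its pullback to $E$ is the trivial bundle, which has degree $0$. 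This gives $E\cdot X_s=0$.

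For part (b) I would apply (a) to each component. Writing $X_s=\sum_j d_j\Gamma_j$ and using bilinearity,
\[
0=\Gamma_i\cdot X_s=\sum_{j}d_j\,(\Gamma_i\cdot\Gamma_j)=d_i\,\Gamma_i^2+\sum_{j\neq i}d_j\,(\Gamma_j\cdot\Gamma_i),
\]
and solving for $\Gamma_i^2$ yields the asserted formula. The only genuine obstacle is input (ii): identifying the local-length definition of the intersection multiplicity with the degree of the restricted line bundle on the fibral curve $E$. This is precisely the Riemann--Roch/degree computation on arithmetic surfaces, which I would simply cite from the preceding results in \cite{L}; once it is in hand, both (a) and (b) are purely formal.
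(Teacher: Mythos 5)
The paper offers no proof of this statement---it is quoted directly from \cite[Proposition 9.1.21]{L}---so the only comparison available is with the proof in that reference, and your argument reproduces it faithfully: part (a) holds because $\mathcal{O}_X(X_s)|_E=(f|_E)^{*}\bigl(\mathcal{O}_S(s)|_{s}\bigr)$ is the pullback of a line bundle on $\Spec k(s)$, hence trivial of degree $0$, and part (b) is just the bilinear expansion of $0=\Gamma_i\cdot X_s=\sum_j d_j\,(\Gamma_i\cdot\Gamma_j)$. Your proof is correct and complete modulo the standard identification of the intersection number against a vertical divisor with the degree of the restricted line bundle, which you rightly defer to the earlier sections of \cite{L}.
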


\begin{lemma}\label{self int of exc}
Let  $\Lambda_0$ be  a fixed vertex $\Oo_F$-lattice of type $0$. Then
\begin{enumerate}[label=(\alph*)]
	\item $\mathrm{Exc}_{\Lambda_0}\cdot \bP_{\bar\Lambda_2}=\left\{\begin{array}{cc}
			1,   &  \text{if}\ \Lambda_0\subset \Lambda_2,\\
			0,   &  \text{otherwise}.
		\end{array}\right.$
	\item 	$\mathrm{Exc}_{\Lambda_0}\cdot \mathrm{Exc}_{\Lambda_0'}=-2 \delta_{\Lambda_0,  \Lambda_0'}\text{\, for any  type 0 vertex lattice } \Lambda_0'.$
	\item $\mathrm{Exc}_{\Lambda_0}\cdot \cZ^{h}(\bx)=\delta_{\Lambda_0, \Lambda_b} =1$ or $0$ depending on whether $\Lambda_0 =\Lambda_b$ or not.
\end{enumerate}
\end{lemma}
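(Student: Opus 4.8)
The plan is to reduce all three intersection numbers to local computations on the explicit coordinate charts $D_1,D_2,D_3$ of the blow-up described in Section \ref{coordinatechartsKramermodel}, combined with the disjointness statements of Proposition \ref{prop Phi}. I would first record a general reduction: in every case the two divisors meet in finitely many closed points of the special fiber, so by the definition \eqref{eq:Intersection} each intersection number is a finite sum of local terms; whenever the two divisors are cut out near an intersection point by a single equation each and meet properly, the higher $\mathrm{Tor}$-sheaves vanish and the local contribution is just the length of the structure sheaf of the scheme-theoretic intersection. This makes each remaining computation a transversality check.

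For (a), if $\Lambda_0\not\subset\Lambda_2$ then $\Exc_{\Lambda_0}$ and $\bP_{\bar\Lambda_2}$ are disjoint by Proposition \ref{prop Phi}(3), so the number is $0$. If $\Lambda_0\subset\Lambda_2$, then by Lemma \ref{lem3.2} $\Lambda_2$ is one of the two type-$2$ lattices containing $\Lambda_0$, and the two curves meet only at $pt_{\Lambda_0}$. I would work in the chart $D_2$ of \eqref{D2 chart of blow up} (the case of the other lattice $\Lambda_2'$ being symmetric in $D_3$), where by \eqref{Excequation} the divisor $\Exc_{\Lambda_0}$ is cut out by $T_0=0$ and by \eqref{PLambda2equation} the line $\bP_{\bar\Lambda_2}$ is cut out by $S_{\frac{\pi}{T_0}}=0$, inside the regular ring $\Oo_{\breve F}[T_0,S_{\frac{\pi}{T_0}}]/(T_0 S_{\frac{\pi}{T_0}}-\pi)$. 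Since $\{T_0,S_{\frac{\pi}{T_0}}\}$ is a regular system of parameters at the common point, the intersection is transverse with local length $1$, giving $\Exc_{\Lambda_0}\cdot\bP_{\bar\Lambda_2}=1$.

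For (b), the case $\Lambda_0\neq\Lambda_0'$ is immediate from Proposition \ref{prop Phi}(3). For the self-intersection I would use the arithmetic-surface relation of Proposition \ref{Prop Qing Liu}. The key input is that, from the local equation $T_0 S_{\frac{\pi}{T_0}}=\pi$ of \eqref{D2 chart of blow up}, every component of the special fiber occurs with multiplicity $1$, i.e.\ $\mathrm{div}(\pi)=\sum_{\Lambda_2}\bP_{\bar\Lambda_2}+\sum_{\Lambda_0}\Exc_{\Lambda_0}$ is reduced. Because $\pi$ is a global function, $\Oo(\mathrm{div}(\pi))$ is trivial, so its restriction to the proper curve $\Exc_{\Lambda_0}\cong\bP^1$ has degree $0$, whence $\Exc_{\Lambda_0}\cdot\mathrm{div}(\pi)=0$. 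Since by Lemma \ref{lem3.2} the only components meeting $\Exc_{\Lambda_0}$ are the two lines $\bP_{\bar\Lambda_2},\bP_{\bar\Lambda_2'}$ attached to the two type-$2$ lattices containing $\Lambda_0$, expanding and using (a) gives $0=\Exc_{\Lambda_0}^2+1+1$, so $\Exc_{\Lambda_0}^2=-2$. The hard part is precisely this step: Proposition \ref{Prop Qing Liu} is stated for an $X$ proper over the base, whereas $\cN^{\Kra}$ is only a formal scheme and is not proper (the Bruhat--Tits tree is infinite). I would therefore run the argument intrinsically, relying only on the properness of the single curve $\Exc_{\Lambda_0}$, the triviality of $\Oo(\mathrm{div}(\pi))$, and the finiteness of its set of neighboring components, rather than quoting the proper case verbatim.

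For (c), I would invoke Corollary \ref{cor:Intersection}: $\cZ^h(\bx)$ is nonempty only when $q(b)\neq 0$, in which case $\cZ^h(\bx)\cong\Spf\Oo_{\breve F}$ meets the special fiber in the single point of $\Exc_{\Lambda_b}$ given by \eqref{equation for horizontal component}. Hence $\Exc_{\Lambda_0}\cdot\cZ^h(\bx)=0$ for $\Lambda_0\neq\Lambda_b$. For $\Lambda_0=\Lambda_b$ I would compute in the chart $D_1$ of \eqref{D1 chart of blow up}, where by \eqref{Excequation} the divisor $\Exc_{\Lambda_b}$ is cut out by $\pi=0$ and by \eqref{equation for horizontal component} the curve $\cZ^h(\bx)$ is cut out by $S_{\frac{T_1}{\pi}}-\frac{\bar{\alpha}_1}{\bar{\alpha}_0}=0$, with $\frac{\bar{\alpha}_1}{\bar{\alpha}_0}\in\Oo_F^\times$ a unit by Corollary \ref{cor:Intersection}. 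The scheme-theoretic intersection is then $\bar\kay$, of length $1$, and the two divisors meet transversally, so $\Exc_{\Lambda_b}\cdot\cZ^h(\bx)=1$. Combining the two cases yields $\Exc_{\Lambda_0}\cdot\cZ^h(\bx)=\delta_{\Lambda_0,\Lambda_b}$.
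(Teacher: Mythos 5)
Your proposal is correct and follows essentially the same route as the paper: part (a) and part (c) by reading off the explicit local equations \eqref{Excequation}, \eqref{PLambda2equation}, \eqref{PLambda'2equation} and \eqref{equation for horizontal component} in the blow-up charts and checking transversality, and part (b) by combining part (a) with the relation $\Exc_{\Lambda_0}\cdot\mathrm{div}(\pi)=0$ from Proposition \ref{Prop Qing Liu} together with the reducedness of the special fiber. Your one genuine addition is the observation that Proposition \ref{Prop Qing Liu} is stated for a proper fibered surface whereas $\cN^{\Kra}$ is a non-quasi-compact formal scheme with infinitely many components in its special fiber; the paper cites the proposition without comment, and your repair --- using only the properness of the single curve $\Exc_{\Lambda_0}\cong\bP^1_{\bar\kay}$, the triviality of $\Oo(\mathrm{div}(\pi))$, and the fact that only finitely many components meet $\Exc_{\Lambda_0}$ to conclude $\deg\bigl(\Oo(\mathrm{div}(\pi))|_{\Exc_{\Lambda_0}}\bigr)=0$ --- is exactly the right way to make that citation rigorous.
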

\begin{proof}
By \eqref{Excequation}, \eqref{PLambda2equation} and \eqref{PLambda'2equation}, we can see that
\begin{equation}\label{eq:Exc intersect P}
    \Exc_{\Lambda_0}\cdot \bP_{\bar{\Lambda}_2}=\Exc_{\Lambda_0}\cdot \bP_{\bar{\Lambda}_2'}=1
\end{equation}
if $\Lambda_0=\Lambda_2\cap \Lambda_2'$. For a $\Lambda_2''$ s.t. $\Lambda_0 \not \subset \Lambda_2''$, then clearly $\Exc_{\Lambda_0}\cap \bP_{\bar{\Lambda}_2''}=\emptyset$. Hence $(a)$ is proved. Part $(b)$ follows from Proposition \ref{Prop Qing Liu} and part $(a)$. Part $(c)$ follows from Theorem \ref{horizonal component of Z(x)} and \eqref{equation for horizontal component}.
\end{proof}

\begin{lemma}\label{int of P} Let $\Lambda_2$ be a fixed vertex $\Oo_F$-lattice of type $2$. Then
\begin{enumerate}[label=(\alph*)]
   \item $\mathbb{P}_{\bar{\Lambda}_2}\cdot\mathbb{P}_{\bar{\Lambda}_2'}=\left\{\begin{array}{cc}
		-(q+1),   &  \text{if}\ \Lambda_2=\Lambda_2',\\
		0,   &  \text{otherwise}.	\end{array}\right.$
   \item $\bP_{\bar{\Lambda}_2}\cdot \cZ^{h}(\bx)=0$.
\end{enumerate}
\end{lemma}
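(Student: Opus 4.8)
The plan is to prove the two parts of Lemma \ref{int of P} using the decomposition and intersection-theoretic tools already assembled. For part (a), the key input is Proposition \ref{proposition about reduced locus of N_F}(2): two distinct projective lines $\mathbb P_{\bar\Lambda_2}$ and $\mathbb P_{\bar\Lambda_2'}$ in $\cN_{(1,1)}/\bar\kay$ either meet at a single superspecial point (when $\Lambda_2\cap\Lambda_2'=\Lambda_0$ is a type $0$ lattice) or are disjoint. However, the intersection in the statement is computed in the Kr\"amer model $\cN^{\Kra}$, and by Proposition \ref{prop Phi}(3) the \emph{transforms} $\mathbb P_{\bar\Lambda_2}$ and $\mathbb P_{\bar\Lambda_2'}$ in $\cN^{\Kra}$ \textbf{never} meet for $\Lambda_2\neq\Lambda_2'$ — they are separated by the exceptional divisor $\Exc_{\Lambda_0}$. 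So the off-diagonal case $\Lambda_2\neq\Lambda_2'$ gives $0$ immediately. The substance is the self-intersection $\mathbb P_{\bar\Lambda_2}^2=-(q+1)$.

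First I would compute the self-intersection via Proposition \ref{Prop Qing Liu}(b), the adjunction/triviality formula for a fibered regular surface: since $\cN^{\Kra}$ is regular (Kr\"amer) and proper flat over $\Spf\Oo_{\breve F}$, any fiber component $\Gamma_i$ satisfies $\Gamma_i^2 = -\frac1{d_i}\sum_{j\ne i} d_j\,\Gamma_j\cdot\Gamma_i$, where the $d_j$ are the multiplicities of the components of the special fiber. The special fiber of $\cN^{\Kra}$ consists of the lines $\mathbb P_{\bar\Lambda_2}$ (each appearing with some multiplicity, which one must read off from the semistable reduction — here each $\mathbb P_{\bar\Lambda_2}$ has multiplicity $1$) together with the $q+1$ exceptional lines $\Exc_{\Lambda_0}$ attached at the $q+1$ type $0$ sublattices $\Lambda_0\subset\Lambda_2$ (by Lemma \ref{lem3.2}, $\Lambda_2$ has exactly $q+1$ type $0$ sublattices). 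By Lemma \ref{self int of exc}(a), each such $\Exc_{\Lambda_0}$ meets $\mathbb P_{\bar\Lambda_2}$ transversally in one point, so $\Exc_{\Lambda_0}\cdot\mathbb P_{\bar\Lambda_2}=1$, while Proposition \ref{prop Phi}(3) says distinct $\mathbb P$'s do not meet. Feeding these into Proposition \ref{Prop Qing Liu}(b) gives $\mathbb P_{\bar\Lambda_2}^2 = -\sum_{\Lambda_0\subset\Lambda_2} \Exc_{\Lambda_0}\cdot\mathbb P_{\bar\Lambda_2} = -(q+1)$, assuming all multiplicities are $1$; the main obstacle is precisely verifying these multiplicities in the special fiber of $\cN^{\Kra}$, which I would extract from the local blow-up charts \eqref{Excequation}--\eqref{PLambda'2equation} of Section \ref{coordinatechartsKramermodel}.

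For part (b), $\mathbb P_{\bar\Lambda_2}\cdot\cZ^h(\bx)=0$, I would argue as follows. By Corollary \ref{cor:Intersection}, when $q(b)\ne 0$ the horizontal divisor $\cZ^h(\bx)\cong\Spf\Oo_{\breve F}$ meets the special fiber of $\cN^{\Kra}$ in a single point lying on the exceptional divisor $\Exc_{\Lambda_b}$, and the explicit equation \eqref{equation for horizontal component} locates this point on $\Exc$ \emph{away} from its intersection with any $\mathbb P_{\bar\Lambda_2}$ (the intersection with $\mathbb P_{\bar\Lambda_2}$ sits at $S_{\frac{\pi}{T_0}}=0$ by \eqref{PLambda2equation}, whereas $\cZ^h(\bx)$ sits at $S_{\frac{\pi}{T_0}}=\frac{\bar\alpha_1}{\bar\alpha_0}$, a unit since both $\alpha_i$ are units). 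Hence $\cZ^h(\bx)$ does not meet $\mathbb P_{\bar\Lambda_2}$ set-theoretically, and the intersection number vanishes. When $q(b)=0$ there is no horizontal component at all, so the statement is vacuous.

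The step I expect to be the genuine obstacle is pinning down the multiplicities $d_j$ of the components of the special fiber of $\cN^{\Kra}$ needed for Proposition \ref{Prop Qing Liu}(b): the formula $-(q+1)$ is robust only if each $\mathbb P_{\bar\Lambda_2}$ enters the special fiber with multiplicity $1$ and each $\Exc_{\Lambda_0}$ likewise, so that the sum over the $q+1$ adjacent edges of $\Lambda_2$ produces exactly $q+1$. This is where the explicit blow-up description \eqref{D1 chart of blow up}--\eqref{D3 chart of blow up} and the semistable reduction of $\cN^{\Kra}$ are indispensable, and I would cross-check the count against Figure \ref{specialfiberpic}, where each blue line $\mathbb P_{\bar\Lambda_2}$ is visibly surrounded by $q+1=4$ red exceptional lines.
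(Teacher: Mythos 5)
Your proposal is correct and follows essentially the same route as the paper: the vanishing for $\Lambda_2\neq\Lambda_2'$ comes from the separation of strict transforms in $\cN^{\Kra}$, the self-intersection $-(q+1)$ from Proposition \ref{Prop Qing Liu}(b) together with the $q+1$ exceptional divisors meeting $\mathbb P_{\bar\Lambda_2}$ transversally, and part (b) from the explicit local equations showing $\cZ^h(\bx)$ meets the special fiber only on $\Exc_{\Lambda_b}$ away from $S_{\frac{\pi}{T_0}}=0$. Your extra care about the multiplicities $d_j$ being $1$ is a point the paper leaves implicit (it follows from the semistable reduction of $\cN^{\Kra}$ and the reducedness visible in the charts \eqref{Excequation}--\eqref{PLambda'2equation}), but it does not change the argument.
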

\begin{proof}
Note that $\bP_{\bar{\Lambda}_2} \cap \bP_{\bar{\Lambda}_2'}=\emptyset$ if $\Lambda_2 \not = \Lambda_2'$. Moreover, there are $\q+1$ exceptional divisors intersecting with $\mathbb{P}_{\bar{\Lambda}_2}$. Then part $(b)$ of Proposition \ref{Prop Qing Liu} and equation \eqref{eq:Exc intersect P} imply that $\mathbb{P}_{\bar{\Lambda}_2}\cdot\mathbb{P}_{\bar{\Lambda}_2}+(\q+1)=0$. So part $(a)$ follows. It is clear from \eqref{Excequation}, \eqref{PLambda2equation} and \eqref{equation for horizontal component} that $\bP_{\bar\Lambda_2}$ and $\cZ^h(\bx)$ do not intersect. Hence part $(b)$ follows.
\end{proof}
\begin{lemma}\label{int btw P and z(x)}
	$\mathbb{P}_{\bar{\Lambda}_2}\cdot \cZ^{\Kra}(\bx)=\left\{\begin{array}{cc}
	1,   &  \text{if}\ \Lambda_2\in \mathcal{T}(\mathbf{x}),\\
	0,   &  \text{otherwise}.
	\end{array}\right.$
\end{lemma}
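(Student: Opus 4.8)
The plan is to expand the intersection number by inserting the decomposition of $\cZ^{\Kra}(\bx)$ from Theorem \ref{decomposition of special cycle} and using bilinearity of the pairing \eqref{eq:Intersection}, thereby reducing to the local intersection data already recorded in Lemmas \ref{self int of exc} and \ref{int of P}. Writing $b=\bx(e)$ and expanding, the horizontal term contributes nothing by Lemma \ref{int of P}(b), the line $\bP_{\bar\Lambda_2'}$ contributes only for $\Lambda_2'=\Lambda_2$ by Lemma \ref{int of P}(a), and $\Exc_{\Lambda_0}$ contributes only when $\Lambda_0\subset\Lambda_2$ by Lemma \ref{self int of exc}(a). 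This yields, for $\Lambda_2\in\cT(\bx)$,
\[
\bP_{\bar\Lambda_2}\cdot\cZ^{\Kra}(\bx)=-(\q+1)\,n(b,\Lambda_2)+\sum_{\substack{\Lambda_0\subset\Lambda_2\\ \Lambda_0\in\cT(\bx)}}\bigl(n(b,\Lambda_0)+1\bigr),
\]
the first term being absent when $\Lambda_2\notin\cT(\bx)$.

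First I would dispose of the case $\Lambda_2\notin\cT(\bx)$: by Lemma \ref{lem3.6} this means $b\notin\Lambda_2$, i.e.\ $n(b,\Lambda_2)<0$, and since every type-$0$ sublattice $\Lambda_0\subset\Lambda_2$ satisfies $n(b,\Lambda_0)\le n(b,\Lambda_2)<0$ by \eqref{n(b,Lambda_0)=min}, no such $\Lambda_0$ lies in $\cT(\bx)$. Hence the right-hand side vanishes, giving the value $0$.

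For $\Lambda_2\in\cT(\bx)$ I would set $n:=n(b,\Lambda_2)\ge0$ and analyze the values of $n(b,\cdot)$ on the $\q+1$ type-$0$ sublattices $\Lambda_0$ of $\Lambda_2$ (equivalently, the $\q+1$ edges of $\mathcal B$ at the vertex $\Lambda_2$, cf.\ Proposition \ref{proposition about reduced locus of N_F}). The key claim is that exactly one of these has $n(b,\Lambda_0)=n$ while the remaining $\q$ have $n(b,\Lambda_0)=n-1$. When $q(b)\ne0$ this follows from Lemma \ref{bruhat2}: the unique edge on the geodesic toward $\Lambda_b$ is at distance $d(\Lambda_2,\Lambda_b)-\tfrac12$, giving $n(b,\Lambda_0)=n$, and the other $\q$ edges are at distance $d(\Lambda_2,\Lambda_b)+\tfrac12$, giving $n-1$; when $q(b)=0$ it follows from Lemma \ref{conelemma} together with \eqref{n(b,Lambda_0)=min}, the unique ascending neighbor producing the value $n$ and the other $\q$ neighbors the value $n-1$. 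Granting this, if $n=0$ only the distinguished $\Lambda_0$ lies in $\cT(\bx)$ and contributes $1$, while the $\bP$-term is $0$, giving $1$; if $n\ge1$ all $\q+1$ sublattices lie in $\cT(\bx)$, contributing $(n+1)+\q n=n(\q+1)+1$, which cancels the $\bP$-term $-(\q+1)n$ to leave $1$. In every case the value is $1$.

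The main obstacle is the uniformity of this combinatorial claim across the anisotropic ($q(b)\ne0$) and isotropic ($q(b)=0$) regimes; I expect to settle it once and for all by observing that in both regimes $n(b,\cdot)$ increases by $1$ along exactly one edge out of $\Lambda_2$ and decreases by $1$ along the other $\q$, so that \eqref{n(b,Lambda_0)=min} forces the edge-values to be $n$ once and $n-1$ exactly $\q$ times. Everything else is bookkeeping of signs and of the counts $1$ and $\q$, so no further difficulty is anticipated; in particular $\cZ^h(\bx)$ plays no role, being either empty or meeting only $\Exc_{\Lambda_b}$ and no $\bP_{\bar\Lambda_2}$.
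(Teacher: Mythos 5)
Your proposal is correct and follows essentially the same route as the paper: expand $\cZ^{\Kra}(\bx)$ via Theorem \ref{decomposition of special cycle}, apply Lemmas \ref{self int of exc} and \ref{int of P}, and reduce to the combinatorial fact that exactly one of the $\q+1$ type-$0$ sublattices of $\Lambda_2$ satisfies $n(b,\Lambda_0)=n(b,\Lambda_2)$ while the remaining $\q$ give $n(b,\Lambda_2)-1$ (via Lemma \ref{bruhat2} in the anisotropic case and Lemma \ref{conelemma} with \eqref{n(b,Lambda_0)=min} in the isotropic case). Your extra care in the $\Lambda_2\notin\cT(\bx)$ case and the $n=0$ versus $n\ge1$ split is a slightly more explicit bookkeeping of what the paper compresses into one line, not a different argument.
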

\begin{proof}
When  $\Lambda_{2} \not
	\in \mathcal{T}(\bx)$,  the intersection number is obviously 0. When  $\Lambda_2 \in \mathcal{T}(\bx)$.  We have by Theorem \ref{decomposition of special cycle},  Lemma \ref{self int of exc} and \ref{int of P},
\begin{align*}
\mathbb P_{\bar{\Lambda}_2}\cdot \cZ^{\Kra}(\bx)
 =&\bP_{\bar\Lambda_2}\cdot[\sum_{\Lambda'_2\in \cT(\bx)}n(b,\Lambda'_2)\bP_{\bar\Lambda'_2}+\sum_{\Lambda_0\in \cT(\bx)}(n(b,\Lambda_0)+1)\Exc_{\Lambda_0}]\\
 &= -(q+1) n(b, \Lambda_2) + \sum_{\Lambda_0 \subset \Lambda_2} (n(b, \Lambda_0)+1)
 \\
 &=\sum_{\Lambda_0 \subset \Lambda_2} (n(b, \Lambda_0)+ 1 - n(b,  \Lambda_2)).
\end{align*}
Now combining the information from \eqref{n(b,Lambda_0)=min} with Lemmas  \ref{bruhat2} and \ref{conelemma} we see that there is exactly one vertex lattice $\Lambda'$ of type $0$ in $\Lambda_2$ such that $n(b,\Lambda')=n(b,\Lambda_2)$ and for any other vertex lattice $\Lambda$ of type $0$ in $\Lambda_2$ we have $n(b,\Lambda)=n(b,\Lambda_2)-1$. Hence we have
\[\sum_{\Lambda_0 \subset \Lambda_2} (n(b, \Lambda_0)+ 1 - n(b,  \Lambda_2))=1.\]
The lemma follows.
\end{proof}

\begin{lemma}\label{int btw Exc and z(x)}
	$\mathrm{Exc}_{\Lambda_0}\cdot \cZ^{\Kra}(\bx)=\left\{\begin{array}{cc}
	-1,   &  \Lambda_0\in \mathcal{T}(\mathbf{x}),\\
	0,   &  \text{otherwise}.
	\end{array}\right.$
\end{lemma}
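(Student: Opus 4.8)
The plan is to compute the intersection number $\mathrm{Exc}_{\Lambda_0}\cdot \cZ^{\Kra}(\bx)$ by pairing $\mathrm{Exc}_{\Lambda_0}$ against the explicit decomposition of $\cZ^{\Kra}(\bx)$ provided by Theorem \ref{decomposition of special cycle}, in complete parallel to the proof of Lemma \ref{int btw P and z(x)}. When $\Lambda_0\notin\cT(\bx)$ the exceptional divisor $\mathrm{Exc}_{\Lambda_0}$ meets none of the components in the decomposition of $\cZ^{\Kra}(\bx)$ (by Lemma \ref{self int of exc}(a), the only lines $\bP_{\bar\Lambda_2}$ it can meet are those with $\Lambda_0\subset\Lambda_2$, and such $\Lambda_2$ then also lie outside $\cT(\bx)$ since $\cT(\bx)$ is a subtree; the horizontal part is governed by Lemma \ref{self int of exc}(c)), so the intersection number is $0$. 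The content is the case $\Lambda_0\in\cT(\bx)$.

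In that case I would write, using Theorem \ref{decomposition of special cycle},
\begin{equation}
\mathrm{Exc}_{\Lambda_0}\cdot \cZ^{\Kra}(\bx)
=\mathrm{Exc}_{\Lambda_0}\cdot\Big[\sum_{\Lambda_2\in\cT(\bx)}n(b,\Lambda_2)\bP_{\bar\Lambda_2}
+\sum_{\Lambda_0'\in\cT(\bx)}(n(b,\Lambda_0')+1)\mathrm{Exc}_{\Lambda_0'}+\delta_b\cZ^h(\bx)\Big].
\end{equation}
By Lemma \ref{self int of exc}(a) only the two type-$2$ lattices containing $\Lambda_0$, call them $\Lambda_2$ and $\Lambda_2'$, contribute to the first sum, each with intersection number $1$; by Lemma \ref{self int of exc}(b) only the term $\Lambda_0'=\Lambda_0$ survives in the second sum, contributing $-2(n(b,\Lambda_0)+1)$; and by Lemma \ref{self int of exc}(c) the horizontal term contributes $\delta_b\,\delta_{\Lambda_0,\Lambda_b}$. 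Hence
\begin{equation}
\mathrm{Exc}_{\Lambda_0}\cdot \cZ^{\Kra}(\bx)
= n(b,\Lambda_2)+n(b,\Lambda_2')-2\big(n(b,\Lambda_0)+1\big)+\delta_b\,\delta_{\Lambda_0,\Lambda_b}.
\end{equation}

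The remaining step is the numerical bookkeeping showing this equals $-1$. I would split according to whether $q(b)=0$ or not. Using the relation $n(b,\Lambda_0)=\min(n(b,\Lambda_2),n(b,\Lambda_2'))$ from \eqref{n(b,Lambda_0)=min} together with Lemma \ref{bruhat2} and Lemma \ref{conelemma}, one checks that for an interior edge $\Lambda_0$ (i.e.\ one not equal to $\Lambda_b$) the two adjacent type-$2$ vertices satisfy $\{n(b,\Lambda_2),n(b,\Lambda_2')\}=\{n(b,\Lambda_0),n(b,\Lambda_0)+1\}$, so that $n(b,\Lambda_2)+n(b,\Lambda_2')=2n(b,\Lambda_0)+1$ and the formula gives $2n(b,\Lambda_0)+1-2n(b,\Lambda_0)-2=-1$. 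When $\Lambda_0=\Lambda_b$ (only possible when $q(b)\ne0$, so $\delta_b=1$), the lattice $\Lambda_b$ is the unique edge where $b\in\Lambda_b\setminus\pi\Lambda_b$ realizes the maximum, so both adjacent vertices have $n(b,\Lambda_2)=n(b,\Lambda_2')=n(b,\Lambda_0)$, giving $2n(b,\Lambda_0)-2(n(b,\Lambda_0)+1)+1=-1$ as well. I expect this last combinatorial case analysis—correctly tracking how $n(b,\cdot)$ behaves at the distinguished edge $\Lambda_b$ versus generic edges, in both the $q(b)\ne 0$ (ball) and $q(b)=0$ (cone) geometries of Corollaries \ref{ballcorollary} and \ref{cor:cone}—to be the only real obstacle; everything else is a direct substitution of the self-intersection numbers already established in Lemma \ref{self int of exc}.
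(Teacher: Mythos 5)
Your proposal is correct and follows essentially the same route as the paper: pair $\Exc_{\Lambda_0}$ against the decomposition of Theorem \ref{decomposition of special cycle} via Lemma \ref{self int of exc}, then split into the cases $\Lambda_0\neq\Lambda_b$ (where the two adjacent type-$2$ vertices carry $n(b,\Lambda_0)$ and $n(b,\Lambda_0)+1$) and $\Lambda_0=\Lambda_b$ (where both carry $n(b,\Lambda_0)$ and the horizontal component contributes $1$). One small inaccuracy in your treatment of the case $\Lambda_0\notin\cT(\bx)$: a type-$2$ lattice $\Lambda_2\supset\Lambda_0$ can still lie in $\cT(\bx)$, namely when $\Lambda_2$ is a boundary vertex with $n(b,\Lambda_2)=0$ while $n(b,\Lambda_0)=-1$; the conclusion nevertheless stands because that component enters the decomposition with coefficient $n(b,\Lambda_2)=0$.
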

\begin{proof}
Assume $\Lambda_2$ and $\Lambda_2'$ are the two vertex lattices of type 2 that contain $\Lambda_0$. We treat the cases $\Lambda_0\neq \Lambda_b$ first.  According to \eqref{n(b,Lambda_0)=min}, Lemma  \ref{bruhat2} and \ref{conelemma}, we can without loss of generality assume that
$$
n(b,\Lambda_2)=n(b,\Lambda_0)+1,  \hbox{  and } n(b,\Lambda_2')=n(b,\Lambda_0).
$$
Then by Theorem \ref{decomposition of special cycle} and Lemma \ref{self int of exc}, we have
	\begin{align*}
	\mathrm{Exc}_{\Lambda_0}\cdot \cZ^{\Kra}(\bx)=&\Exc_{\Lambda_0}\cdot[\sum_{\Lambda_2\in \cT(\bx)}n(b,\Lambda_2)\bP_{\bar\Lambda_2}+\sum_{\Lambda'_0\in \cT(\bx)}(n(b,\Lambda'_0)+1)\Exc_{\Lambda'_0}]\\
	=&n(b,\Lambda_2)\mathrm{Exc}_{\Lambda_0}\cdot\mathbb{P}_{\bar{\Lambda}_2}+n(b,\Lambda_2')\mathrm{Exc}_{\Lambda_0}\cdot\mathbb{P}_{\bar{\Lambda}_2'}+(n(b,\Lambda_0)+1)\mathrm{Exc}_{\Lambda_0}\cdot\mathrm{Exc}_{\Lambda_0}\\
	=&(n(b,\Lambda_0)+1)+n(b,\Lambda_0)-2(n(b,\Lambda_0)+1)\\
	=&-1.
	\end{align*}
Now assume $\Lambda_0=\Lambda_b$, which occurs only when $q(b)\neq 0$. Then notice that  $n(b,\Lambda_2)=n(b,\Lambda_2')=n(b,\Lambda_0)$, and $\Exc_{\Lambda_0}\cdot\cZ^h(\bx)=1$. The rest of the proof is the same as the previous case.
\end{proof}
\begin{lemma}\label{int btw horizontal cycle and z(x)}
Write $b_i =\bx_i(e)$, and assume that $(b_1, b_2) =0$ and $q(b_i) \ne 0$.
	 Then $\cZ^h(\bx_1)\cdot \cZ^h(\bx_2) =0$, and
$$
	\cZ^h(\bx_1)\cdot
	\cZ^{\Kra}(\bx_2)=n(b_2,\Lambda_{b_1})+1.
$$
\end{lemma}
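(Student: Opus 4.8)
The plan is to reduce both assertions to the componentwise intersection data already established in Lemmas \ref{self int of exc} and \ref{int of P}, combined with the explicit local description of horizontal cycles in Corollary \ref{cor:Intersection}. First I would record the setup. Since $q(b_1),q(b_2)\ne 0$, each $\cZ^h(\bx_i)\cong\Spf\Oo_{\breve F}$ meets the special fiber of $\cN^{\Kra}$ in a single point $P_i$ lying on $\Exc_{\Lambda_{b_i}}$, whose coordinate in the first affine chart is $S_{\frac{T_1}{\pi}}\equiv\bar\alpha_1^{(i)}/\bar\alpha_0^{(i)}\pmod\pi$, where $c_i:=\pi^{-\ord_{\pi_0}q(b_i)}b_i=\alpha_0^{(i)}e_0+\alpha_1^{(i)}e_1$ in a basis of $\Lambda_{b_i}$ normalized as in Proposition \ref{equation in Pappas model}. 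I would then observe that orthogonality forces $\Lambda_{b_1}=\Lambda_{b_2}$: by Lemma \ref{unique} the lattice $\Lambda_{b_i}$ splits orthogonally as $\Oo_F c_i\oplus\Oo_F c_i^\perp$ with $c_i^\perp$ of unit norm spanning $(Fc_i)^\perp=(Fb_i)^\perp$; since $b_1\perp b_2$ this orthogonal line is $Fb_2$ for $i=1$ and $Fb_1$ for $i=2$, and two unit‑norm vectors on a common $F$‑line differ by a unit of $\Oo_F$ (as the norm is multiplicative). Hence $\Oo_F c_1^\perp=\Oo_F c_2$ and $\Oo_F c_2^\perp=\Oo_F c_1$, so $\Lambda_{b_1}=\Lambda_{b_2}=:\Lambda_0$ and $n(b_2,\Lambda_{b_1})=n(b_2,\Lambda_{b_2})=\ord_{\pi_0}q(b_2)$, which in the integral regime $q(b_2)\in\Oo_{F_0}$ is $\ge 0$, so that $\Lambda_{b_1}\in\cT(\bx_2)$.

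For the vanishing $\cZ^h(\bx_1)\cdot\cZ^h(\bx_2)=0$ I would show the two horizontal cycles are disjoint. As $b_1\perp b_2$ with both nonzero, $\bx_1,\bx_2$ are linearly independent, so Corollary \ref{rank2propersupport} shows $\cZ(\bx_1)\cap\cZ(\bx_2)$, hence $\cZ^h(\bx_1)\cap\cZ^h(\bx_2)$, is supported on the special fiber; since each $\cZ^h(\bx_i)$ meets it only at $P_i$, it suffices to prove $P_1\ne P_2$. Working in the common basis $\{e_0,e_1\}$ of $\Lambda_0$ and using \eqref{standarde_0e_1cont}, the relation $(c_1,c_2)=0$ becomes $\alpha_0^{(1)}\bar\alpha_1^{(2)}+\alpha_1^{(1)}\bar\alpha_0^{(2)}=0$, while $q(c_i)$ being a unit forces all four $\alpha_j^{(i)}\in\Oo_F^\times$. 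Reducing modulo $\pi$, where the conjugation acts trivially on $\kay=\Oo_F/\pi$, this gives $P_1+P_2\equiv 0$; if $P_1=P_2$ then $2P_1\equiv 0$, whence $P_1\equiv 0$ since $p>2$, contradicting that $P_1\equiv\alpha_1^{(1)}/\alpha_0^{(1)}$ is a ratio of units. Thus $P_1\ne P_2$, the cycles are disjoint, and the intersection number is $0$.

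For the second identity I would expand $\cZ^{\Kra}(\bx_2)$ via Theorem \ref{decomposition of special cycle} and use bilinearity of the pairing, exactly as in Lemmas \ref{int btw P and z(x)} and \ref{int btw Exc and z(x)}:
\begin{align*}
\cZ^h(\bx_1)\cdot\cZ^{\Kra}(\bx_2)=&\sum_{\Lambda_2\in\cT(\bx_2)}n(b_2,\Lambda_2)\,\bigl(\cZ^h(\bx_1)\cdot\bP_{\bar\Lambda_2}\bigr)\\
&+\sum_{\Lambda_0'\in\cT(\bx_2)}(n(b_2,\Lambda_0')+1)\,\bigl(\cZ^h(\bx_1)\cdot\Exc_{\Lambda_0'}\bigr)+\cZ^h(\bx_1)\cdot\cZ^h(\bx_2).
\end{align*}
By Lemma \ref{int of P}(b) every term of the first sum vanishes, by the first part the last term vanishes, and by Lemma \ref{self int of exc}(c) one has $\cZ^h(\bx_1)\cdot\Exc_{\Lambda_0'}=\delta_{\Lambda_0',\Lambda_{b_1}}$. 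Since $\Lambda_{b_1}=\Lambda_{b_2}\in\cT(\bx_2)$, only $\Lambda_0'=\Lambda_{b_1}$ contributes, giving $\cZ^h(\bx_1)\cdot\cZ^{\Kra}(\bx_2)=n(b_2,\Lambda_{b_1})+1$, as claimed.

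The main obstacle is the disjointness in the first part: one must rule out that the two horizontal branches collide inside the single exceptional line $\Exc_{\Lambda_0}$ that (by the setup) both pass through. This is exactly where the hypotheses $(b_1,b_2)=0$ and $p>2$ enter, via the explicit coordinates of $P_1,P_2$ from Corollary \ref{cor:Intersection} and the triviality of conjugation on the residue field; everything else is formal bilinear bookkeeping against the already-proven componentwise lemmas.
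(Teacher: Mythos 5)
Your proof is correct and follows the same overall architecture as the paper's: establish $\Lambda_{b_1}=\Lambda_{b_2}$, use Corollary \ref{cor:Intersection} to locate each horizontal component's unique intersection point with the common exceptional divisor, show those two points are distinct to get $\cZ^h(\bx_1)\cdot\cZ^h(\bx_2)=0$, and then expand $\cZ^{\Kra}(\bx_2)$ via Theorem \ref{decomposition of special cycle} against Lemmas \ref{self int of exc} and \ref{int of P}. The only real divergence is in how you rule out $P_1=P_2$: the paper compares determinants of the gram matrices of $\{e_0,e_1\}$ and $\{\beta_1,\beta_2\}$ to conclude $\alpha_0\alpha_1'-\alpha_1\alpha_0'\in\Oo_F^\times$, which directly contradicts $\bar\alpha_1/\bar\alpha_0\equiv\bar\alpha_1'/\bar\alpha_0'\pmod{\pi}$; you instead feed the orthogonality $(c_1,c_2)=0$ into the coordinates to get $P_1+P_2\equiv 0$ in $\bar\kay$ and invoke $p>2$ together with $P_i\neq 0$. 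Both are valid and rest on the same explicit coordinate description; the paper's version avoids any appeal to $p\neq 2$ at this step (though that is a standing hypothesis anyway), while yours makes the role of the orthogonality hypothesis more visibly the source of the disjointness. Your slightly more detailed justification that $\Lambda_{b_1}=\Lambda_{b_2}$ via Lemma \ref{unique} is a welcome expansion of a step the paper only asserts.
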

\begin{proof}
Define
$\beta_i=\pi^{-\ord_{\pi_0} q(b_i)} b_i$ for $i=1,2$.
The assumption implies
$$
\Lambda_{b_1}=\Lambda_{b_2} =\Oo_F\beta_1+\Oo_F\beta_2,
$$
and the Gram matrix of $\{\beta_1,\beta_2\}$ is diagonal with diagonal entries in $\Oo^\times_{F_0}$.
Let $\{e_0, e_1\}$ be an $\Oo_F$-basis of $\Lambda_{b_1}$ as in Proposition \ref{equation in Pappas model} and write
 \begin{align*}
	 \beta_1=\alpha_0 e_0+\alpha_1 e_1, \hbox{ and } \beta_2=\alpha'_0 e_0+\alpha_1' e_1.
\end{align*}
By comparing the determinants of the Gram matrices of $\{e_0, e_1\}$ and $\{\beta_1,\beta_2\}$, we know that the transition matrix $\left(\begin{array}{cc}
    \alpha_0 & \alpha_1 \\
    \alpha'_0 & \alpha'_1
\end{array}\right)$ has determinant $\alpha_0  \alpha_1' - \alpha_1 \alpha_0' =u \in \Oo_F^\times$.

 On the other hand, if $\cZ^h(\bx_1)$ and $\cZ^h(\bx_2)$ intersect, the intersection points would be in the special fiber. By Corollary \ref{cor:Intersection}, they intersect at one point, which is also the intersection between them and the exceptional divisor $\Exc_{\Lambda_{b_i}}$.  The same corollary asserts   that the point is given  by  (say in first Chart)
$$
\frac{\bar\alpha_1}{\bar \alpha_0} \equiv  \frac{\bar{\alpha}'_1}{\bar{\alpha}'_0} \pmod \pi.
$$
This implies
$\alpha_0  \alpha_1' - \alpha_1 \alpha_0' \in  \pi \Oo_F$, a contradiction.  So $\cZ^h(\bx_1)\cdot\cZ^h(\bx_2) =0$.
Now we have by Theorem \ref{decomposition of special cycle}, Lemmas \ref{self int of exc} and \ref{int of P},
	\begin{align*}
	\cZ^h(\bx_1)\cdot
	\cZ^{\Kra}(\bx_2)&=\cZ^h(\bx_1)\cdot (n(b_2,\Lambda_{b_1})+1)\mathrm{Exc}_{\Lambda_{b_1}}\\
	&=n(b_2,\Lambda_{b_1})+1.
	\end{align*}
\end{proof}
\begin{theorem}\label{int between special cycle,diagonal}
	Let $\{\bx_1,\bx_2\}$ be a basis of a full rank lattice $L_{\bx_1,\bx_2}\subset \bV$, and $b_1=\bx_1(e),b_2=\bx_2(e)$ as before. Assume
	\begin{align*}
	T= \begin{pmatrix}(b_1,b_1)&(b_1,b_2)\\(b_2,b_1)&(b_2,b_2)\end{pmatrix}\approx \begin{pmatrix}u_1(-\pi_0)^{\alpha}&0\\0&u_2(-\pi_0)^{\beta}\end{pmatrix}
	\end{align*}
	where $ \alpha \leq \beta$, $ u_1,u_2\in \Oo_{F_0}^\times$ and $-u_1u_2\in \mathrm{Nm}(F^{\times})$. When $\alpha \ge 0$, we have
 \[\mathrm{Int}(L_{\bx_1, \bx_2})=\alpha+\beta-\frac{2\q(\q^\alpha-1)}{\q-1}.\]
When  $\alpha <0$, we have  $\mathrm{Int}(L_{\bx_1, \bx_2})=0$.
\end{theorem}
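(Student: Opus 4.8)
The plan is to reduce to orthogonal vectors and then expand one cycle via the decomposition of Theorem \ref{decomposition of special cycle}, intersecting it component-by-component with the other. Since $\Int(L_{\bx_1,\bx_2})$ is independent of the chosen basis by \cite{Ho2} and depends only on the equivalence class of $T$, I would pick a basis realizing the diagonal form, so that $b_i=\bx_i(e)$ satisfy $(b_1,b_2)=0$ and $\ord_{\pi_0}q(b_i)=\alpha,\beta$ respectively. When $\alpha<0$, the entry $q(b_1)=u_1(-\pi_0)^\alpha\notin\Oo_{F_0}$, so $\cZ^{\Kra}(\bx_1)=0$ by Theorem \ref{decomposition of special cycle}, whence $\Int=0$; this settles the second assertion immediately.

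For $\alpha\geq 0$ both norms lie in $\Oo_{F_0}$, so both cycles are nonzero. Writing $\beta_i=\pi^{-\ord_{\pi_0}q(b_i)}b_i$, orthogonality forces $\Lambda_{b_1}=\Lambda_{b_2}=\Oo_F\beta_1+\Oo_F\beta_2$ (exactly the configuration analyzed at the start of Lemma \ref{int btw horizontal cycle and z(x)}). By Corollary \ref{ballcorollary} the trees $\cT(\bx_1)$ and $\cT(\bx_2)$ are then \emph{concentric} balls about the midpoint of $\Lambda_{b_1}$, of radii $\alpha+\tfrac12\leq\beta+\tfrac12$, so $\cT(\bx_1)\subseteq\cT(\bx_2)$. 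Corollary \ref{rank2propersupport} guarantees the naive intersection is supported on finitely many components of the special fiber, so the number \eqref{eq:Intersection} is defined, and I would expand $\cZ^{\Kra}(\bx_1)$ by Theorem \ref{decomposition of special cycle} and use bilinearity:
\begin{align*}
\Int(L_{\bx_1,\bx_2}) &= \sum_{\Lambda_2\in\cT(\bx_1)} n(b_1,\Lambda_2)\,\bP_{\bar{\Lambda}_2}\cdot\cZ^{\Kra}(\bx_2) \\
&\quad + \sum_{\Lambda_0\in\cT(\bx_1)} (n(b_1,\Lambda_0)+1)\,\Exc_{\Lambda_0}\cdot\cZ^{\Kra}(\bx_2) + \cZ^h(\bx_1)\cdot\cZ^{\Kra}(\bx_2).
\end{align*}

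Because $\cT(\bx_1)\subseteq\cT(\bx_2)$, Lemmas \ref{int btw P and z(x)} and \ref{int btw Exc and z(x)} make each $\bP_{\bar{\Lambda}_2}\cdot\cZ^{\Kra}(\bx_2)=1$ and each $\Exc_{\Lambda_0}\cdot\cZ^{\Kra}(\bx_2)=-1$, while Lemma \ref{int btw horizontal cycle and z(x)} gives $\cZ^h(\bx_1)\cdot\cZ^{\Kra}(\bx_2)=n(b_2,\Lambda_{b_1})+1=\beta+1$ (using Lemma \ref{bruhat2} and $d(\Lambda_{b_1},\Lambda_{b_2})=0$). Thus $\Int(L_{\bx_1,\bx_2})=S_2-S_0+(\beta+1)$, where $S_2=\sum_{\Lambda_2\in\cT(\bx_1)}n(b_1,\Lambda_2)$ and $S_0=\sum_{\Lambda_0\in\cT(\bx_1)}(n(b_1,\Lambda_0)+1)$.

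It remains to evaluate these two sums. Here I would invoke Lemma \ref{bruhat2}, which expresses $n(b_1,\Lambda)$ through the distance to $\Lambda_{b_1}$, together with the $(\q+1)$-regularity of $\mathcal B$ (Lemma \ref{lem3.2}): there are $2\q^d$ type-$2$ vertices at distance $d+\tfrac12$ and $2\q^d$ type-$0$ edges at distance $d\geq 1$, with the single central edge $\Lambda_{b_1}$ at distance $0$. Since $n(b_1,\Lambda_2)=\alpha-d$ at level $d$ and $n(b_1,\Lambda_0)+1=\alpha-d+1$, the sums become elementary geometric series, and a short computation gives $S_2-S_0=\alpha-1-\tfrac{2\q(\q^\alpha-1)}{\q-1}$, hence $\Int(L_{\bx_1,\bx_2})=\alpha+\beta-\tfrac{2\q(\q^\alpha-1)}{\q-1}$ (and one checks this specializes correctly at $\alpha=0$). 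The main obstacle is bookkeeping rather than conceptual: one must track the horizontal contribution precisely through Lemma \ref{int btw horizontal cycle and z(x)} and get the lattice counts on the tree right. What makes the argument clean is the geometric input that the two balls are concentric and nested about $\Lambda_{b_1}=\Lambda_{b_2}$, which forces every vertical intersection number to be the constant $\pm1$ and collapses the problem to a single combinatorial sum.
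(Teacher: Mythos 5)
Your proposal is correct and follows essentially the same route as the paper's own proof: reduce to a diagonalizing basis, observe that orthogonality forces $\Lambda_{b_1}=\Lambda_{b_2}$ so the trees are nested concentric balls, expand $\cZ^{\Kra}(\bx_1)$ via Theorem \ref{decomposition of special cycle}, apply Lemmas \ref{int btw P and z(x)}, \ref{int btw Exc and z(x)}, \ref{int btw horizontal cycle and z(x)}, and evaluate the resulting sums over the $(\q+1)$-regular tree using Lemma \ref{bruhat2}. Your explicit lattice counts ($2\q^d$ vertices at distance $d+\tfrac12$, etc.) and the resulting value $S_2-S_0=\alpha-1-\tfrac{2\q(\q^\alpha-1)}{\q-1}$ agree with the paper's computation.
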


\begin{proof} We may assume $T= \hbox{Diag} (u_1(-\pi_0)^{\alpha}, u_2(-\pi_0)^{\beta})$. In such a case, we see
  that $\mathrm{span}_{\Oo_F}\{\pi^{-\alpha}\bx_1(e),\pi^{-\beta}\bx_2(e)\}=\Lambda_{b_1}=\Lambda_{b_2}$. Moreover $\cT(\bx_1)\subset\cT(\bx_2)$ and $\cT(\bx_1)$ is a ball of radius $\alpha+\frac{1}{2}$ centered at $\Lambda_{b_1}$ by Corollary \ref{ballcorollary}.
  We have by
	Theorem \ref{maintheo2} and Lemmas \ref{int btw P and z(x)},  \ref{int btw Exc and z(x)},   \ref{int btw horizontal cycle and z(x)},  and \ref{bruhat2},
	\begin{align*}
	&\cZ^{\Kra}(\bx_1)\cdot\cZ^{\Kra}(\bx_2)\\
	=&\sum_{\Lambda_2\in \mathcal{T}(\bx_1)}\ n(b_1,\Lambda_2) \mathbb{P}_{\bar{\Lambda}_2}\cdot\cZ^{\Kra}(\bx_2)+\sum_{\Lambda_0\in \mathcal{T}(\bx_1)} (n(b_1,\Lambda_0)+1) \mathrm{Exc}_{\Lambda_0}\cdot\cZ^{\Kra}(\bx_2)\\&+\mathcal{Z}^h(\bx_1)\cdot\cZ^{\Kra}(\bx_2)\\
	=&\sum_{\Lambda_2\in \mathcal{T}(\bx_1)}\ n(b_1,\Lambda_2) -\sum_{\Lambda_0\in \mathcal{T}(\bx_1)} (n(b_1,\Lambda_0)+1)+(n(b_2,\Lambda_{b_1})+1)\\
	=&2(\sum_{i=0}^{\alpha }(\alpha-i)\q^{i})-(\alpha +1+2(\sum_{i=1}^{\alpha }(\alpha+1-i)\q^{i}))+(\beta +1)\\
	=&\alpha+\beta-2\sum_{i=1}^{\alpha}\q^i\\
	=&\alpha+\beta-\frac{2\q(\q^\alpha-1)}{\q-1}.
	\end{align*}
\end{proof}
\begin{remark}\label{rmk about Sankaran's result}
	As we mentioned before, there is a similar result for intersection product between special divisors on a similarly defined RZ space $\cN_E$ obtained by Sankaran in \cite{San3}.  Here $E$ is an unramified quadratic field extension of $F_0$.
\end{remark}

\begin{theorem}\label{int between special cycle,antidiagonal}
	For $\bx_1,\bx_2\in \mathbb{V}$, let $b_1=\bx_1(e),b_2=\bx_2(e)$ as before. Assume
	\begin{align*}
	T=\begin{pmatrix}(b_1,b_1)&(b_1,b_2)\\(b_2,b_1)&(b_2,b_2)\end{pmatrix}\approx \begin{pmatrix}0&\pi^{n}\\(-\pi)^{n}&0\end{pmatrix},
	\end{align*}
	with $n$ odd. Write $r=\frac{n+1}{2}$. Then $\mathrm{Int}(L_{\bx_1, \bx_2}) =0$ unless $n \ge 0$. In such a case,
 \[\Int(L_{\bx_1, \bx_2}) =
 - \frac{(\q+1)(\q^{r}-1)}{\q-1} + 2r.\]
\end{theorem}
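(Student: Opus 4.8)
The plan is to reduce this anti-diagonal computation to the decomposition of special divisors from Theorem \ref{decomposition of special cycle} together with the intersection lemmas just established, exactly in parallel with the diagonal case handled in Theorem \ref{int between special cycle,diagonal}. Since $\mathrm{Int}(L_{\bx_1,\bx_2})$ depends only on the equivalence class of $T$ (by \cite{Ho2}), I may assume $T = \kzxz{0}{\pi^n}{(-\pi)^n}{0}$ exactly, so that $q(b_1)=q(b_2)=0$ and $(b_1,b_2)=\pi^n$. When $n<0$ the vectors satisfy $q(b_i)=0$ but the off-diagonal valuation forces $\cT(\bx_1)\cap\cT(\bx_2)=\emptyset$, whence the naive intersection is empty and $\mathrm{Int}=0$; this is the easy boundary case. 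For $n\geq 0$ odd, write $r=\frac{n+1}{2}$ and invoke Lemma \ref{int of BT tree anti diagonal case}, which tells me precisely that $\cT(\bx_1)\cap\cT(\bx_2)$ is a ball of radius $r$ centered at the type-$2$ vertex $\Lambda=\mathrm{span}_{\Oo_F}\{\pi^{-r}b_1,\pi^{-r}b_2\}$.

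The core computation expands $\cZ^{\Kra}(\bx_1)\cdot\cZ^{\Kra}(\bx_2)$ using the decomposition of $\cZ^{\Kra}(\bx_1)$ from Theorem \ref{decomposition of special cycle}. Because $q(b_1)=0$, there is no horizontal component, so
\begin{equation*}
\cZ^{\Kra}(\bx_1)=\sum_{\Lambda_2\in\cT(\bx_1)}n(b_1,\Lambda_2)\,\bP_{\bar\Lambda_2}+\sum_{\Lambda_0\in\cT(\bx_1)}(n(b_1,\Lambda_0)+1)\,\Exc_{\Lambda_0}.
\end{equation*}
Intersecting term-by-term against $\cZ^{\Kra}(\bx_2)$ and applying Lemma \ref{int btw P and z(x)} and Lemma \ref{int btw Exc and z(x)}, each $\bP_{\bar\Lambda_2}$ contributes $+1$ exactly when $\Lambda_2\in\cT(\bx_2)$ and each $\Exc_{\Lambda_0}$ contributes $-1$ exactly when $\Lambda_0\in\cT(\bx_2)$. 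Thus the whole intersection number becomes
\begin{equation*}
\sum_{\Lambda_2\in\cT(\bx_1)\cap\cT(\bx_2)}n(b_1,\Lambda_2)-\sum_{\Lambda_0\in\cT(\bx_1)\cap\cT(\bx_2)}\bigl(n(b_1,\Lambda_0)+1\bigr),
\end{equation*}
a purely combinatorial sum over the ball $\cT(\bx_1)\cap\cT(\bx_2)=B(\Lambda,r)$ described by Lemma \ref{int of BT tree anti diagonal case}.

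The remaining work is to evaluate these two weighted lattice-point sums over the ball of radius $r$ in the Bruhat--Tits tree $\mathcal B=\mathcal B(\mathrm{PU}(C))$, using the explicit value $n(b_1,\Lambda)$ computed inside the proof of Lemma \ref{int of BT tree anti diagonal case} (where, along the ascending geodesic for $b_1$, the function $n(b_1,\cdot)$ grows linearly and decreases by one along each step away from the axis, by Lemma \ref{conelemma}). Concretely I would stratify the vertices and edges of $B(\Lambda,r)$ by their distance from the center: at each distance level the regular $(\q+1)$-valent structure of the tree gives $\q^{j}$ or $(\q+1)\q^{j-1}$ new lattices, and $n(b_1,\cdot)$ on each is determined by Lemma \ref{bruhat2} and the cone structure of Corollary \ref{cor:cone}. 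Summing the geometric series and cancelling the linearly-growing $n(b_1,\cdot)$ contributions against the $-1$ weights on the type-$0$ terms should collapse to the closed form $-\frac{(\q+1)(\q^{r}-1)}{\q-1}+2r$. The main obstacle is precisely this bookkeeping: getting the counting of type-$0$ versus type-$2$ lattices at each radius correct, and verifying that the $n(b_1,\Lambda)$ weights behave symmetrically on the ball (they do not, since $b_1$ and $b_2$ have \emph{opposite} ascending directions through the ball), so one must carefully use that within $\cT(\bx_1)\cap\cT(\bx_2)$ the value $n(b_1,\cdot)$ increases toward the $b_1$-side while the constraint $\Lambda\in\cT(\bx_2)$ cuts off the sum on the $b_2$-side — the combinatorial identity will hold only after this asymmetry is tracked honestly.
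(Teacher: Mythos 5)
Your proposal follows essentially the same route as the paper's proof: reduce to the combinatorial sum $\sum_{\Lambda_2}n(b_1,\Lambda_2)-\sum_{\Lambda_0}(n(b_1,\Lambda_0)+1)$ over $\cT(\bx_1)\cap\cT(\bx_2)$ via Theorem \ref{decomposition of special cycle} and Lemmas \ref{int btw P and z(x)}, \ref{int btw Exc and z(x)}, then evaluate it over the radius-$r$ ball of Lemma \ref{int of BT tree anti diagonal case}; the bookkeeping you leave as a sketch is carried out in the paper by partitioning the ball into the subtrees $\cL_k$ hanging off the geodesic $\cC$ from the center $\Lambda_{0,2}$ to the $b_1$-ascending endpoint $\Lambda_{r,2}$, which is precisely the device that tames the asymmetry you correctly flag. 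One small caveat: for $n=-1$ the trees $\cT(\bx_1)$ and $\cT(\bx_2)$ do meet (at the single type-$2$ vertex $\mathrm{span}_{\Oo_F}\{b_1,b_2\}$, since a type-$2$ lattice permits $(b_1,b_2)\in\pi^{-1}\Oo_F$), so your vanishing argument in the boundary case should instead note that the weight $n(b_1,\cdot)$ at that vertex is $0$ and no type-$0$ lattice lies in the radius-$0$ ball, whence the sum (and the stated formula at $r=0$) is still zero.
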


\begin{proof} We may assume that $T=\kzxz {0} {\pi^n} {(-\pi)^n} {0}$.  By Lemmas \ref{int btw P and z(x)}, \ref{int btw Exc and z(x)} and Theorem \ref{decomposition of special cycle}, we have
\begin{align*}
    &\cZ^{\Kra}(\bx_1)\cdot \cZ^{\Kra}(\bx_2)\\
    =&\sum_{\Lambda_2\in \mathcal{T}(\bx_1)} n(b_1,\Lambda_2) \mathbb{P}_{\bar{\Lambda}_2}\cdot\cZ^{\Kra}(\bx_2)+\sum_{\Lambda_0\in \mathcal{T}(\bx_1)} (n(b_1,\Lambda_0)+1) \mathrm{Exc}_{\Lambda_0}\cdot\cZ^{\Kra}(\bx_2)\\
    =&\sum_{\Lambda_2 \in  \cT(\bx)} n(b_1, \Lambda_2)
 - \sum_{\Lambda_0 \in \cT(\bx)} (n(b_1, \Lambda_0)+1),
\end{align*}
where $\cT(\bx) =\cT(\bx_1)\cap \cT(\bx_2)$.

 Write $b_i= \pi^r v_i$  for $i=1,2$. Define
\[\Lambda_{k,2}=\mathrm{span}_{\Oo_F}\{\pi^{-k} v_1,\pi^{k} v_2\},\ \Lambda_{k,0}=\mathrm{span}_{\Oo_F}\{\pi^{-k} v_1,\pi^{k+1} v_2\},\ k\in \Z.\]
where $\Lambda_{k,2}$ is a vertex lattice of type $2$ and $\Lambda_{k,0}$ is a vertex lattice of type $0$. It is immediate that
\[n(b_1,\Lambda_{k,0})=n(b_1,\Lambda_{k,2})=r+k.\]
By Lemma \ref{int of BT tree anti diagonal case}, $ \mathcal{T}(\bx)$ is a ball of radius $r$ centered at $\Lambda_{0,2}$. We define $\cC=\cC(\Lambda_{0,2},\Lambda_{r,2})$ to be the geodesic segment joining $\Lambda_{0,2}$ and $\Lambda_{r,2}$.

Now we divide $\cT(\bx)$ into $r+1$ parts $\cL_k$ using $\mathcal C$ as follows: for $0\leq  k\leq r$, let $\cL_k$ be the part of $\cT(\bx)$ such that $\Lambda\in \cL_k$ if and only if $\Lambda_{k,2}$ is the first vertex lattice of type $2$ that the geodesic from $\Lambda$ to $\Lambda_{0,2}$ encounters on $\cC$.  In other words, if we set $\cL_k'$ to be the subtree of $\cT(\bx)\setminus \mathcal C$ that starts from $\Lambda_{k,2}$. Then $\cL_k =\cL_k' \cup \Lambda_{k,0}$ if $\Lambda_{k,0}\in \cT(\bx)$.
In particular, $\cL_r=\{\Lambda_{r,2}\}$.
Then we have
$$
\cZ^{\Kra}(\bx_1)\cdot \cZ^{\Kra}(\bx_2) =\sum_{k=0}^{r} (S(k) -S'(k))
$$
where
$$
S(k)=\sum_{\Lambda_2 \in \cL_k} n(b_1, \Lambda_2),  \hbox{ and } S'(k) =\sum_{\Lambda_0 \in \cL_k} (n(b_1, \Lambda_0)+1)
$$
For $k=0$ we have
\[S(0)=\sum_{i=0}^r (r-i) \q^i,\ S'(0)=\sum_{i=0}^r (r+1-i) \q^i.\]
For $1\leq k \leq r-1$ we have
\[S(k)=r+k+\sum_{i=0}^{r-k-1} (r+k-i-1) (\q-1) \q^i,\]
\[S'(k)=r+k+1+\sum_{i=0}^{r-k-1} (r+k-i) (\q-1) \q^i.\]
For $k=r$ we have
\[S(r)=2r,\ S'(r)=0.\]
Summing these terms up,  we obtain
\begin{align*}
	\cZ^{\Kra}(\bx_1) \cdot \cZ^{\Kra}(\bx_2) =& (S(0)-S'(0))+\sum_{k=1}^{r-1}(S(k)-S'(k))+S(r)\\
	=& -\sum_{i=0}^r \q^i-\sum_{k=1}^{r-1} \q^{r-k}+2r\\
	=& -\frac{\q^{r+1}-1}{\q-1}-\frac{\q(\q^{r-1}-1)}{\q-1}+2r\\
	=&-\frac{(\q+1)(\q^{r}-1)}{\q-1} + 2r.
\end{align*}
as claimed.

\end{proof}

By Theorems \ref{int between special cycle,diagonal} and  \ref{int between special cycle,antidiagonal}, we see that $\hbox{Int}(L_{\bx_1, \bx_2})$ depends only on the Gram matrix $T(b_1, b_2) =-\delta^2  T(\bx_1, \bx_2)$. Since $-\delta^2 \in  \Oo_{F_0}^\times$,  we see that the formulas in both theorems are not affected when we change $T(b_1, b_2)$ to $T(\bx_1, \bx_2)$.  This proves Theorem \ref{maintheo1}.

\section{Local densities and the Kudla-Rapoport Conjecture} \label{sect:localdensity}

In this section we record basic results on  local density and prove Theorem \ref{maintheo3}. Let $L$ be an integral $\Oo_F$-lattice with Gram matrix $S$ (unique up to equivalence), and let $T$ be an $n \times n$ invertible Hermitan matrix over $T$. Recall the local density polynomial $\alpha(L, T,X)= \alpha(S, T, X)$ defined in the introduction. the following explicit formulas are special cases considered in  (\cite{Shi2}).

\begin{theorem}(\cite[Theorem 6.2]{Shi2})\label{localdensity}
	 Let $L$ be an $\Oo_F$ Hermitian lattice with Gram matrix $S=\hbox{diag}(v, 1)$ with $v \in \Oo_{F_0}^\times$, and let $\mathcal H$ be the Hermitian hyperbolic $\Oo_F$-plane defined in the introduction. Set $\epsilon_2 =\chi(-v)$, where $\chi$ be the quadratic character of $F_0^\times$ associated to the extension of $F/F_0$.
	
	\begin{enumerate}
		\item Assume that $T\approx \left(\begin{array}{cc}
			u_1 (-\pi_0)^\alpha & 0 \\
			0 & u_2 (-\pi_0)^\beta
		\end{array}\right)$
with $ \alpha \le \beta$ and $u_i \in \Oo_{F_0}^\times$. Set $\epsilon_1 =\chi(-u_1 u_2)$.   Then
$\alpha(L, T, X)=\alpha(\mathcal H, T, X)=0$ unless $\alpha \ge 0$. Assume $\alpha \ge 0$, we have
		\begin{align*}
			\alpha(L, T, X)
			=& (1-X) (1+ \epsilon_2 + \q \epsilon_2) \sum_{e=0}^\alpha (\q X)^e
			-\epsilon_1 \q^{\alpha+1} X^{\beta+1} (1-X) \sum_{e=0}^{\alpha} (\q^{-1}X)^e
			\\
			-&\epsilon_1 (1+\q) (  X^{\alpha+ \beta +2}+\epsilon_1 \epsilon_2 )
			+ (1+\epsilon_2) \q^{\alpha+1} X^{\alpha+1} (1+\epsilon_1 X^{\beta-\alpha}),
		\end{align*}
and
		
	$$\alpha(\mathcal{H}, T, X)=\left(1-q^{-2} X\right)(\sum_{e=0}^\alpha(q X)^e+\epsilon_1 \sum_{e=\beta+1}^{\alpha+\beta+1} q^{\alpha+\beta+1-e} X^e )$$
In particular,
 $$
  \alpha(L, T)=-\epsilon_1(1+\epsilon_1 \epsilon_2) (1+q) + (1+\epsilon_1) (1+ \epsilon_2) q^{\alpha+1},
  $$
  which is zero  if and only if $\epsilon_1 \epsilon_2 =-1$. Similarly
  $$
  \alpha(\mathcal H, T)= (1+\epsilon_1) (1+q^{-1}) (q^\alpha - q^{-1}),
  $$
which is zero if and only if $\epsilon_1 =-1$.

		\item
		Assume that $T= \left(\begin{array}{cc}
			0 & \pi^n \\
			(-\pi)^n  & 0
		\end{array}\right)$ where $n$ is odd.  Then $\alpha(L, T, X) =\alpha(\mathcal H, T, X)=0$ unless $n \ge -1$.  When $n\ge -1$, we have
		\begin{align*}
			\alpha(L,T,X)=& -\q^{n+2}(1-X)\sum_{e=\frac{n+1}{2}+1}^{n+1}(\q^{-1}X)^e+(1-X) (1+\epsilon_2 + \epsilon_2 q) \sum_{e=0}^{\frac{n+1}{2}} (\q X)^e \\
			&-(\q+1)(\epsilon_2+X^{n+2})+(1+\epsilon_2)(\q+1)\q^{\frac{n+1}{2}}X^{\frac{n+1}{2}+1},
		\end{align*}
		and
		$$\alpha(\mathcal{H}, T, X)=\left(1-q^{-2} X\right)(\sum_{e=0}^{\frac{n+1}{2}}(q X)^e+\sum_{\frac{n+3}{2}}^{n+1} q^{n+1-e} X^e).$$
In particular,
$$
\alpha(L, T)= -(1+\epsilon_2) (q+1) (1- q^{\frac{n+1}2}),
$$
which is zero if and only if $\epsilon_2 =-1$. Finally,
$$
\alpha(\mathcal H, T) =\frac{1+q}{q^2} \left[ -2 + (1+q)q^{\frac{n+1}2}\right] \ne 0.
$$		
	\end{enumerate}
\end{theorem}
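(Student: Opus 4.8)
The plan is to compute the two local density polynomials directly from their integral definition, exploiting the orthogonal decomposition $L_r = L \oplus \mathcal{H}^r$ to isolate the dependence on $X=\q^{-2r}$ and reducing everything to explicit ramified Gauss sums. First I would reduce to the two normal forms: since $X \mapsto Xg$ with $g \in \mathrm{GL}_n(\Oo_F)$ is a measure-preserving bijection of $L_r^n$ carrying $T(x)=((x_i,x_j))$ to $g^t T(x)\bar g$, the polynomial $\alpha(S,T,X)$ depends only on the equivalence class of $T$; by \cite[Proposition 4.3]{J} every invertible $T \in \Herm_2(F)$ is equivalent to a diagonal or an anti-diagonal form, so it suffices to treat the two cases in the statement, run in parallel for $S=\mathrm{diag}(v,1)$ and for $S=\mathcal H$. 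The vanishing assertions (``$=0$ unless $\alpha \ge 0$'', resp. ``$=0$ unless $n \ge -1$'') then follow from support considerations: if $T$ is not of the form $X^t S_r \bar X$ for any integral $X$ and any $r$, the inner integral over $L_r^n$ vanishes identically in $b$.

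The key structural step is a factorization of the inner Gaussian integral. Because the Hermitian form on $L_r = L \oplus \mathcal H^{\oplus r}$ is the orthogonal sum of the forms on its summands, for $x=(x_0,x_1,\dots,x_r)$ one has $T(x) = T(x_0) + \sum_{i=1}^r T(x_i)$ as $n\times n$ Hermitian matrices, so $\psi(\tr(b\,T(x)))$ factors as a product and the integral over $x$ splits:
\begin{equation*}
\alpha(S,T,\q^{-2r}) = \int_{\Herm_n(F)} I_L(b)\, J_{\mathcal H}(b)^r\, \psi(-\tr(bT))\, db,
\end{equation*}
where $I_L(b) = \int_{L^n}\psi(\tr(b\,T(x)))\,dx$ and $J_{\mathcal H}(b) = \int_{\mathcal H^n}\psi(\tr(b\,T(x)))\,dx$. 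This already explains why $\alpha(S,T,\cdot)$ is a polynomial in $X=\q^{-2r}$: on its support $J_{\mathcal H}(b)$ is a fixed power of $\q^{-2}$ determined by the level of $b$, so $J_{\mathcal H}(b)^r$ contributes $X^{\mathrm{level}(b)}$ up to constants, and the $b$-integral reduces to a finite sum over finitely many levels.

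I would then stratify $\Herm_n(F)$ by the level $k$ of $b$ (the least $k$ with $\pi^k b$ integral, refined by elementary divisors), and on each stratum evaluate $I_L(b)$, $J_{\mathcal H}(b)$, and $\psi(-\tr(bT))$ as finite quadratic Gauss sums over $\Oo_F/\pi^{\bullet}$. These evaluate through the quadratic character $\chi$ of $F_0^\times$ attached to $F/F_0$, which is exactly where $\epsilon_2 = \chi(-v)$ (from $S=\mathrm{diag}(v,1)$) and $\epsilon_1 = \chi(-u_1u_2)$ (from $T$) enter. Summing the geometric series in $X$ produced by the factor $J_{\mathcal H}(b)^r$ yields the terms $\sum_{e=0}^{\alpha}(\q X)^e$ and $\sum_{e=0}^{\alpha}(\q^{-1}X)^e$, while the finitely many boundary strata contribute the constant and top-degree terms; matching these against the claimed closed forms, then specializing $X=1$ (that is $r=0$) and reading off the criteria $\epsilon_1\epsilon_2=-1$, $\epsilon_1=-1$, $\epsilon_2=-1$, completes the $L$ and $\mathcal H$ computations at once.

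The main obstacle will be the ramified Gauss-sum bookkeeping inside the stratified sum. Because $F/F_0$ is ramified with different $(\pi)$, the norm form $x\bar x$ on $\Oo_F/\pi^k$ degenerates in a way that depends on the parity of $k$, so the Weil indices and powers of $\q$ must be tracked separately according to whether a stratum contributes a square-root Gauss factor or a full power of $\q$; this parity is responsible for the even/odd phenomena visible in the formulas and must be reconciled with the structural fact that $n$ is odd, together with the half-integral levels introduced by the $\pi^{-1}$-scaling of $\mathcal H$. To make this manageable I would first compute the rank-one integrals $\int_{L}\psi(t\, x^t S \bar x)\,dx$ and their $\mathcal H$-analogues in closed form, then assemble the rank-two densities by a column-reduction recursion (an analogue of Katsurada's recursion for Siegel series) that lowers $(\alpha,\beta)$, resp. $n$, and finally verify the stated closed forms by induction, with base cases $\alpha=0$, resp. $n=-1,1$, handled by direct counting.
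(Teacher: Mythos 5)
You should first be aware that this paper contains no proof of this theorem at all: it is imported wholesale from \cite[Theorem A.1]{Shi2}, where the formulas are obtained by explicit computation of ramified Hermitian representation densities in the style of Hironaka (stratification of the $b$-integral, evaluation of Hermitian Gauss sums over $\Oo_F/\pi^k$, and reduction/recursion formulas). Your overall plan --- the factorization of the inner integral over $L_r^n = L^n \times (\mathcal H^n)^r$ into $I_L(b)\,J_{\mathcal H}(b)^r$, stratification by elementary divisors of $b$, and assembly of the rank-two densities from rank-one integrals by a recursion --- is the same method in substance, so strategically you are aligned with the actual source. (Your factorization step is correct and is indeed the standard mechanism behind polynomiality in $X=\q^{-2r}$, though even the claim that $J_{\mathcal H}(b)$ is a power of $\q^{-2}$, not merely of $\q$, on each stratum is something you would need to prove, since it is exactly what makes the answer a polynomial in $\q^{-2r}$ rather than $\q^{-r}$.)

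The one step you argue in detail is argued incorrectly, and the heart of the theorem is deferred rather than proved. First, your vanishing mechanism is false as stated: the inner integral $\int_{L_r^n}\psi(\tr(b\,T(x)))\,dx$ never ``vanishes identically in $b$'' --- at $b=0$ it equals $1$. Vanishing of $\alpha(L,T,X)$ must instead come from the outer integral, i.e.\ from the normalized congruence counts $|\{X \bmod \pi^a : X^tS_r\bar{X}\equiv T\}|$ being zero for large $a$; the correct input is that the form on $L_r$ takes diagonal values in $\Oo_{F_0}$ but off-diagonal pairings only in $\pi^{-1}\Oo_F$ (because the gram matrix of $\mathcal H$ is $\frac{1}{\pi}\kzxz{0}{1}{-1}{0}$), and it is precisely this non-integrality of $\mathcal H$ that produces the threshold $n\ge -1$ rather than $n\ge 0$ in case (2). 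A naive ``$T$ must be integrally represented'' heuristic, applied carelessly, gives the wrong cutoff; note also that representability governs the vanishing of the value $\alpha(L,T)=\alpha(L,T,1)$ (e.g.\ $\alpha(L,T)=0$ when $\epsilon_1\epsilon_2=-1$ even though $\alpha(L,T,X)\not\equiv 0$), so the quantifier ``for any $r$'' in your criterion is essential and needs justification. Second, everything that actually distinguishes this theorem --- the precise closed forms, the appearance of $\epsilon_1=\chi(-u_1u_2)$ and $\epsilon_2=\chi(-v)$, and the parity phenomena of the ramified norm form --- lives in the Gauss-sum bookkeeping that you explicitly flag as ``the main obstacle'' and then only name a remedy for (``an analogue of Katsurada's recursion''). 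In the ramified Hermitian setting no off-the-shelf Katsurada recursion applies; supplying one is exactly the content of the appendix of \cite{Shi2} following Hironaka, so as it stands your proposal is a credible blueprint for that computation, not a proof of it.
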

\begin{proof}[Proof of Theorem \ref{maintheo3}] First notice that the Gram matrix of $L$ can be chosen to be of the form $S=\hbox{diag}(v, 1)$ with $v \in \Oo_{F_0}^\times$ and that the Gram matrix of $\mathcal H$ is $S'$. Let $T =T(\bx_1,\bx_2)$. As $L$ and $L_{\bx_1, \bx_2}$ represent two different Hermitian spaces of dimension $2$, we have to have
$\epsilon_1 \epsilon_2 =-1$ when $T$ can be diagonalized, and  $\epsilon_2 =-1$ when $T$ is anti-diagonal.  The case that $L$ is isotropic is proved in  \cite{Shi2}. Now we assume $L$ is anisotropic, i.e, $\epsilon_2 =-1$. There are two cases.

{\bf Case 1:} We first assume  that $T\approx \hbox{diag}( u_1 (-\pi_0)^\alpha,  u_2(-\pi_0)^\beta)$ with $ \alpha \le \beta$. In this case $\epsilon_1=1$.  When $\alpha <0$, both sides of the identity are automatically zero.  So we assume $\alpha \ge 0$. Theorem \ref{localdensity} implies that
\begin{align*}
	&\alpha(L,T,X)\\
	=&-\q (1-X) \sum_{e=0}^{\alpha} (\q X)^e- \q^{\alpha+1} X^{\beta+1} (1-X) \sum_{e=0}^{\alpha} (\q^{-1}X)^e-(\q+1) (X^{\alpha+\beta+2}-1).
\end{align*}
So
\begin{align*}
	-\alpha'(L, T)=&\frac{\partial}{\partial X}\alpha(L,T,X)|_{X=1}\\
	=&\q\sum_{e=0}^\alpha \q^e+\q^{\alpha+1}\sum_{e=0}^\alpha \q^{-e}-(\q+1)(\alpha+\beta+2)\\
	=&2\sum_{e=1}^{\alpha+1} q^e-(\q+1)(\alpha+\beta+2).
\end{align*}
Theorem \ref{localdensity}(1) also implies
\begin{equation}  \label{eq:SS}
\alpha(L, S) =2 (1+q), \quad
\end{equation}
and
$$
\alpha(\mathcal H, T) = 2 (1+q^{-1}) (q^\alpha -q^{-1}).
$$
Hence the right hand side of the formula in Theorem \ref{maintheo3} is
\begin{align*}
	&\frac{2}{\alpha(L,S)}[\alpha'(L,T)-\frac{\q^2}{\q^2-1}\alpha(\mathcal H,T)]\\
	=&\frac{1}{\q+1}[-2\sum_{e=1}^{\alpha+1} q^e+(\q+1)(\alpha+\beta+2)+\frac{2}{\q-1}-\frac{2\q^{\alpha+1}}{\q-1}]\\
	=&\alpha+\beta+2-2\sum_{e=0}^\alpha \q^e =\hbox{Int}(L_{\bx_1, \bx_2})
\end{align*}
as claimed by Theorem \ref{maintheo1}.

{\bf Case 2:} Now we treat the anti-diagonal case
\[T \approx \left(\begin{array}{cc}
	0 & \pi^n \\
	(-\pi)^n & 0
\end{array}\right),\  n  \hbox{ odd}.
\]
The case $n <-1$ is trivial as both sides are clearly zero.

When  $n \ge  - 1$, and let $r =\frac{n+1}2$.   we have by Theorem \ref{localdensity}
\begin{align*}
\alpha'(L, T)	&=-\frac{\partial}{\partial X}\alpha(L,T,X)|_{X=1}
\\
	&=1-(\q+1) \sum_{e=0}^{r} \q ^e
	+(\q+1)(n+2).
\end{align*}
Combining this with Theorem \ref{localdensity} (2),  we see that the right hand side of Theorem \ref{maintheo3} is
\begin{align*}
     &\frac{2}{\alpha(L,S)}[\alpha'(L,T)-\frac{\q^2}{\q^2-1}\alpha(\mathcal H,T)]\\
	=&\frac{1}{\q+1}-\sum_{e=0}^r \q^e+2r+1-\frac{\q^r}{\q-1}+\frac{2}{(\q-1)(\q+1)}\\
    =& \hbox{Int}(L_{\bx_1, \bx_2})
\end{align*}
as claimed by Theorem \ref{maintheo3}.

\end{proof}

\bibliographystyle{alpha}
\bibliography{reference}
\end{document}